\newtheorem{theorem}{Theorem}[section]
\newtheorem{lemma}[theorem]{Lemma}
\newtheorem{corollary}[theorem]{Corollary}
\newtheorem{proposition}[theorem]{Proposition}
\newtheorem{thesis}[theorem]{Thesis}
\theoremstyle{definition}
\newtheorem{definition}[theorem]{Definition}
\newtheorem{observation}[theorem]{Observation}
\newtheorem{example}[theorem]{Example}
\newtheorem{remark}[theorem]{Remark}
\newtheorem*{question}{Question}
\numberwithin{equation}{section}
\newcommand {\N}{\mathbb{N}} 
\newcommand {\Z}{\mathbb{Z}} 
\newcommand {\R}{\mathbb{R}} 
\newcommand{\T}{\mathcal{T}}
\newcommand{\blank}{\mbox{b}}
\newcommand{\A}{\mathcal{A}}
\newcommand{\CC}{\mathcal{C}}
\newcommand{\FF}{\mathcal{F}}
\newcommand{\GG}{\mathcal{G}}
\newcommand{\MM}{\mathcal{M}}
\newcommand{\PP}{\mathcal{P}}
\newcommand\impliess{\buildrel * \over \implies}
\newcommand\then{\!\!\implies\!\!}
\newcommand\thens{\!\!\impliess\!\!}
\DeclareMathOperator{\dist}{dist}
\DeclareMathOperator{\WP}{WP}
\DeclareMathOperator{\PSL}{PSL}
\DeclareMathOperator{\Dehn}{Dehn}
\DeclareMathOperator{\Area}{Area}
\DeclareMathOperator{\Inf}{Inf}
\journal{European Journal of Combinatorics}
\begin{document}
\title{Groups, Graphs, Languages, Automata, Games and Second-order Monadic Logic}

\author{Tullio Ceccherini-Silberstein}
\address{Dipartimento di Ingegneria, Universit\`a del Sannio, C.so
Garibaldi 107, 82100 Benevento, Italy}
\ead{tceccher@mat.uniroma1.it}
\author{Michel Coornaert}
\address{Institut de Recherche Math\'ematique Avanc\'ee,
UMR 7501, Universit\'e  de Strasbourg et CNRS,
7 rue Ren\'e-Descartes,67000 Strasbourg, France}
\ead{coornaert@math.unistra.fr}
\author{Francesca Fiorenzi}
\address{Laboratoire de Recherche en Informatique,
Universit\'e Paris-Sud 11,
91405 Orsay Cedex, France}
\ead{fiorenzi@lri.fr}
\author{Paul E. Schupp}
\address{Department of Mathematics,
University of Illinois at Urbana-Champaign,
1409 W. Green Street (MC-382)
Urbana, Illinois 61801-2975, USA}
\ead{schupp@math.uiuc.edu}

\begin{abstract}
In this paper we survey some surprising connections between group theory, the theory of automata and formal languages,
the theory of ends, infinite games of perfect information, and monadic second-order logic.
\end{abstract}

\begin{keyword}Finitely generated graph, pushdown automaton, Word Problem, context-free group, ends of a graph, monadic second-order logic, tiling problems, cellular automata.
\MSC[2010] 03D05 \sep
 20F05 \sep 20F10 \sep 20F65 \sep 20F69 \sep 37B15 \sep  68Q70 \sep 68Q80
\end{keyword}
\maketitle

\begin{center}
\textit{Dedicated to Toni Mach\`\i \ on the occasion of his 70th
birthday}
\end{center}
\tableofcontents

\section{Introduction}
In this survey we discuss some interleaved strands of ideas connecting the items in the title.
We do not, of course, develop  all the connections between groups and automata. In particular, we do not consider either \emph{automatic groups}
(see, for instance, the monograph \cite{epstein} by Epstein, Cannon, Hold, Levy, Paterson, and Thurston) or \emph{automata groups}, also called \emph{self-similar groups} (including the well known Grigorchuk group of intermediate growth \cite{grigorchuk, delaharpe1}: see, for instance, \cite{GNS, BGN, BGS} and the monograph \cite{nekrashevych} by Nekrashevych).

A finitely generated group can be described by a
presentation $G = \langle X; R \rangle$ in terms of generators and defining
relators. In this case, the \emph{group alphabet} is $\Sigma = X \cup X^{-1}$.  Anisimov~\cite{anisimov} introduced the
fruitful point of view of considering the Word Problem of $G = \langle X; R \rangle$ as  the  formal language
$\WP(G:X;R) = \{w \in \Sigma^*: w = 1_G\}$. Although the Word Problem is generally  a very complicated
set, Anisimov asked what one could say about the group $G$ if $\WP(G:X;R)$ is a regular or context-free language in
the usual sense of formal language theory.  He showed that a finitely generated group has regular Word Problem if and
only if the group is finite. An important class of groups is the class of virtually free groups, that is, groups having
a free subgroup of finite index.  Muller and Schupp~\cite{MS1} showed that a finitely generated group has context-free
Word Problem if and only if the group is virtually free.

The basic geometric object associated with a finitely generated group $G = \langle X;R \rangle$, its Cayley graph
$\Gamma(G:X;R)$, was already defined by Cayley~\cite{cayley} in 1878. Intuitively, an \emph{end} (a notion due to
Hopf~\cite{hopf} and  Freudenthal~\cite{freudenthal}) of a locally finite graph is a way to go to infinity in the graph.  The number of ends of a connected graph $\Gamma$ with origin $v_0$ is the limit, as $n$ goes to infinity,
of the number of infinite connected components of $\Gamma \setminus \Gamma_n$, where the $n$-ball $\Gamma_n$ consists
of all vertices and edges on paths of length less than or equal to $n$ starting at $v_0$. The number of ends of a
finitely generated group is the number of ends of its Cayley graph. (It is not obvious, but true, that this number
depends only on the group and not on the particular presentation chosen.)  The proof of the characterization of groups
with context-free Word Problem depends heavily on the Stallings structure theorem~\cite{stallings}, which shows that
finitely generated groups with more than one end must have a particular algebraic structure.

It turns out that the connection between ends and context-freeness is much deeper than just  the  case of groups.
It is well-known ~\cite{chiswell, harrison, HU} that a formal language is context-free if and only if it is the
language accepted by some pushdown automaton.  The concept of a \emph{finitely generated graph} gives a common
framework in which one can discuss both Cayley graphs of finitely generated groups and complete transition graphs of
various kinds of automata, in particular the complete transition graph of a pushdown automaton.

Instead of considering
the number of ends of a finitely generated graph $\Gamma$, one can consider the number $c(\Gamma)$ of labelled
graph isomorphism classes of connected components of $\Gamma \setminus \Gamma_n$ over \emph{all} components and
all $n \ge 1$. Say that $\Gamma$
has \emph{finitary end-structure} if $c(\Gamma) < \infty$.  Muller and Schupp~\cite{MS2} proved that a finitely generated
graph has finitary end-structure if and only if $\Gamma$ is isomorphic to the complete transition graph $\Gamma(M)$ of
some pushdown automaton $M$.

One of the most powerful positive results about decision problems in logic is Rabin's theorem~\cite{rabin} that
the second--order monadic theory of the rooted infinite binary  tree $T_2$ is decidable.  This theory,  $S2S$, is the
\emph{theory of two successor functions}, as we now explain.  We consider the infinite binary tree as the rooted tree
with root $v_0$ and right successor edges labelled by $1$ and left successor edges labelled by $0$. The second--order
monadic logic of $T_2$ has variables ranging over arbitrary sets of vertices.  We have two set-valued successor functions:
if $S$ is a set of vertices and $a \in \{0,1\}$ then  $Sa = \{va : v \in S\}$. There is also the
relation symbol $\subseteq$ for set inclusion and a constant symbol $v_0$ for the origin. There are the usual
quantifiers $\forall, \exists$ and the Boolean connectives  $\land$ (and), $\lor$ (or),  and  $\neg$ (negation).
Some formulations include individual variables for single vertices, but sets with a  single element are definable,
 as is equality.  The great power of this language is that one can quantify over arbitrary sets of vertices.

   The characterization of graphs with finitary end structure shows that such graphs are ``very treelike''. Indeed,
such a graph $\Gamma$ contains a regular subtree of finite index, in the sense that there is a subtree $T$ defined
by a finite automaton and a fixed bound $D \geq 0$ such that every vertex in $\Gamma$ is within distance $D$ of
some vertex in the subtree $T$. From this fact, it is possible to reduce questions about the monadic theory of
$\Gamma$ to questions about the monadic theory of the tree $T$. It then follows from Rabin's theorem that
the monadic theory of the complete transition graph of any pushdown automaton is decidable.   In particular,
if $G = \langle X; R \rangle$ is any finitely generated presentation of a virtually free finitely generated group
then the monadic second-order theory of its Cayley graph $\Gamma(G:X;R)$ is decidable. There are finitely generated
graphs which do not have finitary end structure but whose monadic theories are decidable.   However,
 Kuske and  Lohrey~\cite{KL} have recently proved that if the
monadic theory of the Cayley graph of a finitely generated group is decidable then the group must be virtually free.


    There is  an interesting application of the decidability of the monadic second-order theory of Cayley graphs of
context-free groups to  the theory of cellular automata on groups.  The following definition is actually a straightforward generalization of von Neumann's  concept~\cite{neumann} of cellular automata on the grid on integer
lattice points in the plane, that is, the Cayley graph of
$\mathbb{Z}^2$.  Let $G$ be a group and $\Sigma$ a finite set and denote by $\Sigma^G$ the set of all maps
$\alpha \colon G \to \Sigma$.  Equip $\Sigma^G$ with the action of $G$ defined by
$$g(\alpha)(h) = \alpha(g^{-1}h) \mbox{ \ for all \ } \alpha \in \Sigma^G \mbox{ \ and \ }  g,h \in G.$$
Then one says that
a map $\CC \colon \Sigma^G \to \Sigma^G$ is a \emph{cellular automaton}
provided there exists a \emph{finite} subset $M \subset G$ and a map $\mu \colon \Sigma^M \to \Sigma$ such that
\begin{equation}
\label{e:ca1}
\CC(\alpha)(g) = \mu((g^{-1}\alpha)\vert_M)
\end{equation}
for all $\alpha \in \Sigma^G$ and $g \in G$, and where $(\cdot)\vert_M$ denotes the restriction to $M$. One is often
interested in determining whether or not a cellular automaton is surjective (respectively,  injective, bijective).
In particular, the following decision problem naturally arises: given a finite subset $M \subset G$
and a map $\mu \colon \Sigma^M \to \Sigma$, is the  associated cellular automaton $\CC \colon \Sigma^G \to \Sigma^G$
defined in \eqref{e:ca1}  surjective (respectively injective,  bijective) or not?   Amoroso and  Patt~\cite{amoroso}
proved in 1972 that if $G = \Z$ the above problem is decidable. If follows  from the decidability of the monadic
second-order theory of Cayley graphs of context-free groups  that the problem for cellular automata defined over
virtually-free groups is decidable. On the other hand,  Kari~\cite{kari1,kari2,kari3} proved that if $G = \Z^d$, $d \geq 2$,
this problem is undecidable. His  proof is based on  Berger's~\cite{berger} undecidability result for the
\emph{Domino Problem for Wang tiles}.

   In 1960  B\"uchi~\cite{buchi} proved that the monadic theory of $\mathbb N$ with one successor function, $S1S$, is
decidable by introducing finite automata working on infinite words.  Monadic sentences are too complicated to deal with
directly and the idea is to effectively associate with each monadic sentence $\phi$ a finite automaton $\A_{\phi}$ such that
$\phi$ is true if and only if the language $L(\A_{\phi})$ accepted by $\A_{\phi}$ is nonempty.
Of course, one must carefully define what it means for an automaton to accept an infinite word. Rabin used automata working on
infinite trees to establish  a similar correspondence between sentences of $S2S$ and the Emptiness Problem for tree automata.

   The theory of automata working on infinite inputs is thus crucial to studying monadic theories, but proving theorems
about such automata is difficult.  The best way to understand such automata is in terms of infinite games of perfect information
as introduced by  Gale and  Stewart~\cite{GS}. Let $\Sigma$ be a finite alphabet and  let $\Sigma^\N$ denote the set
of all  infinite words $w = a_1a_2 \cdots a_n \cdots$ over $\Sigma$ (all the infinite words which we consider
are infinite to the right).  Let $\mathcal W$ be a subset of  $\Sigma^\N$.
We consider the following game between Player I and Player II: Player I chooses a letter $\sigma_1 \in \Sigma$ and  Player II then
chooses a letter  $\sigma_2 \in \Sigma$.  Continuing indefinitely, at step $n$ Player I chooses a letter  $\sigma_{2n-1} \in \Sigma$
and Player II then chooses a letter  $\sigma_{2n} \in \Sigma$.  This sequence of choices defines an infinite word $w \in \Sigma^{\N}$.
Player I wins the game  if $w \in \mathcal W$ and Player II wins otherwise.
The basic question about such games is whether or not one of the players has a winning strategy, that is,
a map  $\phi: \Sigma^* \to \Sigma$ such that when a finite word $u$ has already been played, the player using the strategy then plays
$\phi(u) \in \Sigma$ and always wins.  Using the Axiom of Choice, it is possible to construct winning sets such that
neither player has a winning strategy, but this cannot happen if the set $\mathcal W$ is not ``too complicated''.
An important theorem of  Martin~\cite{martin1, martin2} shows that if the set $\mathcal W$ is a Borel set
then  one of the two players must have a winning strategy.

   To apply infinite games to automata, given an automaton $M$ one defines the \emph{acceptance game}  $\mathcal G (M,w)$ for $M$
on  an infinite input $w \in \Sigma^\N$.   The first player wins if $M$ accepts $w$
while the second player wins if $M$ rejects.  In the case of automata, the winning condition of the acceptance game is
at the second level of the Borel hierarchy so one of the players has a winning strategy. This  essentially proves closure of
under complementation  of regular languages in $\Sigma^\N$.   The situation is similar for  automata on the binary tree.
The celebrated ``Forgetful Determinacy Theorem'' of  Gurevich and  Harrington~\cite{GH} states that a
fixed finite amount of memory, \emph{the later appearance record}, is all that a winning strategy needs to take into account.

The paper is organized as follows. In Section~\ref{sec:LGA} we review the notions of regular, context-free, and computably  enumerable
 languages together with the parallel notions of grammars and their associated classes of automata: finite-state automata,
pushdown automata, and  Turing machines.   Section~\ref{sec:CG} is devoted to presentations of finitely generated groups  and their
associated Cayley graphs. We  consider the  Word Problem for a finitely generated  group as a  formal language.
 We prove  Anisimov's  characterization of  groups with regular Word Problem and present the
Muller-Schupp characterization of groups with context-free Word Problem. We also discuss
some applications of formal language theory to subgroups  and present
Haring-Smith's characterization of basic groups in terms of their Word Problem.
In Section~\ref{sec:FGG} we consider the notion of a finitely generated graph and  the number of ends
of a finitely generated graphs together with Stallings Structure Theorem and the notion of accessibility. We then consider
the notion of finitely generated graphs with finitary end-structure
 and their characterization as complete transition graphs of pushdown automata.
Section~\ref{s:SOML} is devoted to second-order monadic logic where  we discuss   B\"uchi's theorem
on the decidability of second-order monadic theory $S1S$ and   Rabin's theorem on
the decidability of second-order monadic theory, $S2S$, of the infinite binary tree.
We then discuss  the  decidability of second-order monadic theory for complete transition graphs of pushdown automata.
We  consider  the  classical Domino Problem and its undecidability due to Berger and Robinson. After generalizing the Domino Problem to
finitely generated groups, we show that it is decidable  for virtually free groups.
In Section~\ref{sec:CA} we consider the Surjectivity, Injectivity,  and Bijectivity problems for cellular automata on
finitely generated groups and its decidability for virtually free groups. The last section is devoted to finite automata on
infinite inputs and   the  work of B\"uchi, of Rabin, and of Muller and Schupp.
We then discuss  infinite games of perfect information, the theorems of Davis and Martin, and the
Forgetful Determinacy theorem of Gurevich and Harrington.

\section{Languages, Grammars, and Automata}\label{sec:LGA}

\subsection{The free monoid over a finite alphabet}
\label{monoid}
Let $\Sigma$ be a finite \emph{alphabet}, that is, a finite set of \emph{letters}.
A \emph{word} on $\Sigma$ is any element of the set
$$
\Sigma^* = \bigcup_{n =0}^\infty \Sigma^n,
$$
where $\Sigma^n = \{a_1a_2\cdots a_n: a_k \in \Sigma, 1 \leq k \leq n\}$.
The number $|w| =n$ is the \emph{length} of the word $w = a_1a_2\cdots a_n$.
The unique word  of length zero  is denoted by $\varepsilon$ and is called the \emph{empty word}.
\par
The \emph{concatenation} of two words $w = a_1 a_2\cdots a_n \in \Sigma^n$ and $w' = a_1'a_2' \cdots a'_m \in \Sigma^m$ is the word
$ww' \in \Sigma^{n + m}$ defined by
\begin{equation}
\label{e;concatenation}
ww' = a_1a_2 \cdots a_na_1'a_2'\cdots a_m'.
\end{equation}
We have $\varepsilon w =w\varepsilon =w$  and $(ww')w'' = w(w'w'')$ for all $w,w',w'' \in \Sigma^*$.
Thus, $\Sigma^*$ is a monoid under  the concatenation product with identity element  the empty word $\varepsilon$.
The monoid $\Sigma^*$ satisfies the following universal mapping property: if $M$ is any  monoid,
then every map $f \colon \Sigma \to M$ uniquely extends to a monoid homomorphism $\varphi \colon \Sigma^* \to M$.
Due to this property,  $\Sigma^*$ is the \emph{free monoid} over $\Sigma$.
\par
Let $u,w$ be two words over $\Sigma$. One says that $u$ is a \emph{subword} of $w$
if there exist $u_1, u_2 \in \Sigma^*$ such that $w = u_1uu_2$.
\par
A \emph{language} over $\Sigma$ is a  subset $L \subset \Sigma^*$.

\subsection{Context-free languages}
   In this section, we discuss the class of context-free languages introduced by Chomsky~\cite{chomsky}.
\par
A \emph{context-free grammar} is a quadruple $\GG = (V,\Sigma,P,S_0)$,
where  $V$ is a finite set  of \emph{variables}, disjoint from
the finite alphabet $\Sigma$  of \emph{terminal symbols}. The variable
$S_0 \in V$ is the \emph{start symbol}, and $P \subset V \times (V \cup \Sigma)^*$ is a
finite set of \emph{production rules}. We write $S \vdash u$
 if $(S,u) \in P$.  For $v, w \in (V \cup \Sigma)^*$,
we write $v \then w$ if $v = v_1Sv_2$ and $w=v_1uv_2$, where
$u, v_1, v_2 \in (V \cup \Sigma)^*$ and $S \vdash u$.
The expression $v \then w$ is a single \emph{derivation step}, and it is called \emph{rightmost} if $v_2 \in \Sigma^*$.
A \emph{derivation} is a sequence $v=w_0,w_1, \dots,w_n=w \in (V \cup \Sigma)^*$ such
that $w_i \then w_{i+1}$ for each $i = 0, \dots, n-1$ and we then write $v \thens w$. A
\emph{rightmost derivation} is one where each step is rightmost.
It can be easily shown that if $v \thens w$ with $w\in \Sigma^*$, then there exists a
rightmost derivation $v \thens w$.
For $S \in V$, we consider the language
$L_S = \{w \in \Sigma^* : S \thens w \}$.
The \emph{language generated by}
$\GG$ is
$$
L(\GG) := L_{S_0} = \{w \in \Sigma^*: S_0 \thens w \}.
$$
A \emph{context-free language} is a language generated by a
context-free grammar.

\begin{example}[Dyck's language]
\label{example:dyck}
\rm{The  language of all correctly balanced   expressions involving several types of parentheses is  in some sense the ``primordial'' context-free language.
Let $n \ge 1$ and $\Sigma = \{a_1, \bar{a}_1, \dots, a_n, \bar{a}_n\}$. Consider the grammar $\GG$ with one single variable $S_0$ and productions
$S_0 \vdash \varepsilon \mbox{ and } S_0 \vdash a_i S_0 \bar{a}_i S_0, \ i=1,\dots, n$.
The language $L(\GG)$ generated by the grammar $\GG$ is called the \emph{Dyck language}.
Thinking  of the  $a_i$'s (resp. $\bar{a}_i$'s) as $n$ different ``open'' (resp. ``closed'') parenthesis symbols, then  $L(\GG)$ consists of all correctly nested parenthesis expressions over these symbols.
For example,
$$
\begin{array}{ll}
S_0 &\vdash a_2S_0{\bar a}_{2}S_0 \then a_2S_0{\bar a}_{2}a_1S_0{\bar a}_{1}S_0
\then a_2S_0{\bar a}_{2}a_1S_0{\bar a}_{1}\\
&\then a_2S_0{\bar a}_{2}a_1a_2S_0{\bar a}_{2}S_0{\bar a}_{1}
\then a_2S_0{\bar a}_{2}a_1a_2S_0{\bar a}_{2}{\bar a}_{1}
\then a_2S_0{\bar a}_{2}a_1a_2{\bar a}_{2}{\bar a}_{1}\\
&\then a_2{\bar a}_{2}a_1a_2{\bar a}_{2}{\bar a}_{1}
\end{array}
$$
is the unique rightmost derivation of $a_2\bar{a}_2a_1a_2\bar{a}_2{\bar a}_{1} \in L(\GG)$.}
\end{example}

A context-free grammar $\GG = (V,\Sigma,P,S_0)$ and its associated language $L(\GG)$
are called \emph{linear} if every production rule in $P$ is of the form
$S \vdash v_1Tv_2$ or $S \vdash v$, where $v,v_1,v_2 \in \Sigma^*$ and
$S, T \in V$. If  in this
situation one always has $v_2 = \varepsilon$ (the empty word), then the grammar and
language are called \emph{right linear}.
Similarly, the grammar and language are  \emph{left linear} if one always has $v_1 = \varepsilon$. It is well known
(cf.~\cite{chiswell, harrison, HU}) that both left linear and right linear grammars generate the same class of languages, namely, the class of  \emph{regular} languages.

\begin{example}[Palindromes]
\label{e:palindromes}
\rm{Let $\Sigma$ be a finite alphabet. A word $w = a_1a_2 \cdots a_n$ is a \emph{palindrome} provided that
$a_i = a_{n-i+1}$ for all $i=1,2,\ldots,n$, that is,
 $w$ is the same read both forwards and backwards.
We denote by $L_{\text{\rm pal}}(\Sigma)$ the language consisting of all palindromes over the alphabet $\Sigma$.
For example, $L_{\text{\rm pal}}(\{a\}) = \{a\}^* = \{\varepsilon, a,aa,aaa,\ldots\}$ and
$$
L_{\text{\rm pal}}(\{a,b\}) = \{\varepsilon, a,b,aa,bb,aaa,aba,bab,bbb,aaaa, abba,baab,bbbb, \ldots \}.
$$
Consider the grammar $\GG$ with a unique variable $S_0$ and productions of the form
$S_0 \vdash \varepsilon, S_0 \vdash a$ and $S_0 \vdash a S_0 a$, for each $a \in \Sigma$.
Then $\GG$ is a linear grammar and $L(\GG) = L_{\text{\rm pal}}(\Sigma)$. It follows that the language
consisting of all palindromes is linear.}
\end{example}

\begin{example}[The free group]
\label{example:freegroup}
Let $X$ be a finite set and denote by $F_X$ the free group based on $X$. (If $n$ denotes the cardinality of $X$ we shall also denote $F_X$ by $F_n$ and
 refer to it as to the \emph{free group of rank $n$}.) Let ${X}^{-1}$ be a disjoint copy of $X$ and set $\Sigma = X \cup {X}^{-1}$.
 We denote by $x \mapsto {x}^{-1}$ the involutive map on $\Sigma$ exchanging $X$ and ${X}^{-1}$ so  $({x}^{-1})^{-1} = x$ for all  $x \in X$. A word $w \in \Sigma^*$ is
\emph{reduced} if it contains no subword of the
form $xx^{-1}$ or $x^{-1}x$ for $x \in X$. For example, if $x, y \in X$ are distinct, then the
words $\varepsilon, x, xy, xy^{-1}, xy^{-1}x^{-1}$ are reduced, while $xx^{-1}, x^{-1}xy$ are not.
We denote by $L_{\text{\rm red}}(\Sigma) \subset \Sigma^*$ the language consisting of all reduced words. It is well known that
every element of $F_X$ has a unique representative as a reduced word in $L_{\text{\rm red}}(\Sigma)$.

Consider the grammar $\GG = (V,\Sigma,P,S_0)$ where $V = \{S_0\} \cup \{S_x: x \in \Sigma\}$
and $P$ consists of the productions of the form
$$
S_0 \vdash \varepsilon \mbox{ and } S_0 \vdash xS_x \mbox{ for all } x \in \Sigma
$$
and
$$
S_x \vdash \varepsilon \mbox{ and } S_x \vdash yS_y \mbox{ for all } y \in \Sigma \setminus \{x^{-1}\}
$$
for all $x \in \Sigma$. Note that $\GG$ is a right-linear grammar and that $L(\GG) = L_{\text{\rm red}}(\Sigma)$. Thus,
the language of all reduced words over $\Sigma$ is regular.
\end{example}

Returning to a general context--free grammar $\GG$, for a given variable
$S \in V$, we define the \emph{degree of ambiguity}, $d_S(w)$, of a word
$w \in \Sigma^*$ as the number of  different rightmost derivations
$S \thens w$. We have $d_S(w) > 0$ if and only if $w \in L_S$.
The grammar is called \emph{unambiguous} if $d_{S_0}(w) = 1$ for all $w \in L(\GG)$. Otherwise,
if there exists $w \in L(\GG)$ such that $d_{S_0}(w) > 1$, the grammar is
called \emph{ambiguous}.  A context-free language $L$ is called \emph{unambiguous} if it is generated by
some unambiguous grammar and \emph{inherently ambiguous} if all context-free grammars
generating $L$ are ambiguous. It is a fact that  there exist inherently ambiguous
context-free languages (cf.~\cite{HU}).

\subsection{Growth of context-free languages}
Let $\Sigma$ be a finite alphabet and $L \subset \Sigma^*$ a language. 

The \emph{growth function} of $L$ is the map $\gamma_L \colon \N \to \N$ defined by
$$
\gamma_L(n) = |\{w \in L: |w| \leq n\}|, \ \ \ n \in \N.
$$

Note that
\[
\gamma_L(n) \leq \gamma_{\Sigma^*}(n) = \sum_{k=0}^n \vert \Sigma \vert^k = \frac{\vert \Sigma \vert^{n+1} - 1}{\vert \Sigma \vert - 1} \leq \vert \Sigma \vert^{n+1} = C  \vert \Sigma \vert^{n}
\]
for all $n \in \N$ where $C \geq \vert \Sigma \vert$. It follows that there exist $C > 0$ and $a > 1$ such that
\begin{equation}
\label{e:not-superexponential}
\gamma_L(n) \leq C  a^n
\end{equation}
for all $n \in \N$.

The {\it growth rate} of  $L$ is the number
\begin{equation}\label{entL}
\lambda(L) = \limsup_{n \to \infty} |\{w \in L: |w| \leq n\}|^{\frac{1}{n}}.
\end{equation}
On says that $L$ is of {\it exponential growth} if $\lambda(L) > 1$. Otherwise,  if $\lambda(L) = 1$,  then $L$ is of
{\it sub-exponential growth}.
Note that $L$ is of exponential growth if and only if there exists $a > 1$ such that
$\gamma_L(n) \geq a^n$ for all $n \in \N$. A  language $L$ is said to be of \emph{polynomial growth} provided that
there exist an integer $d \geq 0$ and a constant $C > 0$ such that
$\gamma_L(n) \leq C + Cn^d$ for all $n \in \N$.
Finally, one says that $L$ is of \emph{intermediate growth} if its growth is
sub-exponential but not polynomial.
Note that a language cannot be of ``super-exponential growth'' by virtue of
\eqref{e:not-superexponential}.

Bridson and Gilman \cite{bridson} and, independently, Incitti \cite{incitti},  proved that the growth of a context-free language is either polynomial or exponential. An explicit algorithm
for determining this alternative is presented in \cite{TCSaam}.
On the other hand, Grigorchuk and Mach\`\i \ \cite{GM} presented an example of an
\emph{indexed language} of intermediate growth. (The class of indexed languages,
introduced by A.~Aho, properly contains the class of context-free languages and,
in turn, is properly contained in the class of computably enumerable languages.)

One says that the language $L$ is {\it growth-sensitive} if
$$
\lambda(L^F) < \lambda(L)
$$
for every non-empty $F \subset \Sigma^*$ consisting of
subwords of elements of $L$, where
$$
L^F = \{w \in L : \mbox{no $\;v \in F\;$ is a subword of $\;w$} \}.
$$

It is a well known fact, which can be deduced from the Perron-Frobenius theory
(see \cite{CMS1} for an alternative proof), that regular languages are growth-sensitive.
Ceccherini-Silberstein and Woess \cite{CW1,C2} (see also \cite{CSW2}) extended this result to all unambiguous ergodic context-free languages. (Here ``ergodicity'' corresponds to
strong connectedness of the dependency graph (in the sense of Kuich \cite{kuich}) associated with an unambiguous context-free grammar generating the language.)


\subsection{Finite automata}
\label{sec:fa}

A \emph{nondeterministic finite automaton} is a 5-tuple $\A = (Q,\Sigma, \delta, q_0, F)$ where $Q$ is a nonempty finite set
of \emph{states}, $\Sigma$ is a finite alphabet,
$q_0 \in Q$ is the \emph{initial state}, $F \subset Q$ is the set of \emph{final
states}, and the map
$$\delta \colon Q \times \Sigma \to \PP(Q)$$
is the \emph{transition function}. (As usual,  $\PP(Q)$ denotes the set of all subsets of $Q$.)
The automaton works as follows. When reading a word $w \in \Sigma^*$, letter by letter, from left to right, it can change
 its state according to the transition function.  A \emph{run} of $\A$ on a word $w = \sigma_1 \sigma_2 \cdots \sigma_n$ is a function
$\rho: \{0,1,\ldots,n + 1\} \to Q$ such that $\rho(0) = q_0$ and $\rho(i + 1) \in \delta(\rho(i), \sigma_i)$ for $i = 0,1,\ldots,n$.
A word $w = \sigma_1 \sigma_2 \cdots \sigma_n \in \Sigma^*$ is \emph{accepted} by $\A$
if there exists a run $\rho$ of $\A$ on $w$  such that  $ \rho(n + 1) \in F$.
In short, $\A$ accepts $w$ if there is a sequence of choices allowed by the transition function such that $\A$ is in a final state  after reading the word $w$.
The set of all words $w \in \Sigma^*$ accepted by $\A$ is called the
\emph{language accepted} by $\A$ and it is denoted by~$L(\A)$.

The automaton $\A$ is said to be \emph{deterministic} if
$|\delta(q,a)| \leq 1$ for all $q \in Q$ and $a \in \Sigma$, where $|\,\cdot\,|$ denotes cardinality.

The following is a fundamental characterization of regular languages (see, e.g.~\cite{chiswell, harrison, HU}).
\begin{theorem}
\label{t:characterization-regular}
Let $\Sigma$ be a finite alphabet and $L \subset \Sigma^*$ be a language.
Then $L$ is regular (that is, it is generated by a left-linear (equivalently, by a
right-linear) grammar) if and only if it is accepted by a deterministic finite  automaton.
\end{theorem}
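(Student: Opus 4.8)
The plan is to prove both implications of Theorem~\ref{t:characterization-regular} by explicit, effective constructions, the standard route being: right-linear grammar $\Rightarrow$ nondeterministic finite automaton $\Rightarrow$ deterministic finite automaton, together with the converse deterministic automaton $\Rightarrow$ right-linear grammar. Since the equivalence of left-linear and right-linear grammars (both generating exactly the regular languages) was already quoted in the text, it suffices to work with the right-linear form.

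First I would treat the direction ``$L$ regular $\Rightarrow$ $L$ accepted by a DFA''. Starting from a right-linear grammar $\GG = (V,\Sigma,P,S_0)$, I would first put it in a normal form in which every production is of the shape $S \vdash aT$ or $S \vdash a$ or $S \vdash \varepsilon$ with $a \in \Sigma$ and $S,T \in V$; this is routine since a production $S \vdash v_1 T$ with $v_1 = a_1 \cdots a_k \in \Sigma^*$ can be broken up by introducing fresh intermediate variables. Then I build a nondeterministic automaton $\A = (Q,\Sigma,\delta,q_0,F)$ with state set $Q = V \cup \{q_f\}$ for a new accepting state $q_f$, initial state $q_0 = S_0$, final states $F = \{q_f\} \cup \{S_0 : S_0 \vdash \varepsilon \in P\}$, and transitions $T \in \delta(S,a)$ whenever $S \vdash aT \in P$, and $q_f \in \delta(S,a)$ whenever $S \vdash a \in P$. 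A straightforward induction on the length of a derivation shows $S_0 \thens w$ iff $\A$ has an accepting run on $w$, hence $L(\A) = L(\GG)$. To finish I invoke the subset construction: given $\A$, define the DFA $\A' = (\PP(Q),\Sigma,\delta',\{q_0\},F')$ with $\delta'(P',a) = \bigcup_{q \in P'}\delta(q,a)$ and $F' = \{P' \subseteq Q : P' \cap F \neq \emptyset\}$; an easy induction gives that after reading $w$ the automaton $\A'$ is in the state consisting of exactly those $q \in Q$ reachable by some run of $\A$ on $w$, so $L(\A') = L(\A) = L$ and $\A'$ is deterministic.

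For the converse, ``$L$ accepted by a DFA $\Rightarrow$ $L$ regular'', I would start from a deterministic automaton $\A = (Q,\Sigma,\delta,q_0,F)$ and reverse the previous construction: take $V = Q$, $S_0 = q_0$, and for each $q \in Q$ and $a \in \Sigma$ with $\delta(q,a) = \{q'\}$ include the production $q \vdash aq'$, and additionally $q \vdash a$ whenever $q' \in F$; finally include $q_0 \vdash \varepsilon$ if $q_0 \in F$. This is right-linear, and the same induction as above shows $L(\GG) = L(\A)$, so $L$ is generated by a right-linear grammar, hence regular.

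I do not expect a genuine obstacle here; the proof is entirely by standard constructions. The one point that needs a little care is the bookkeeping around the empty word: the automaton-to-grammar and grammar-to-automaton translations handle $\varepsilon$ by the special productions $q_0 \vdash \varepsilon$ / the membership of $q_0$ in $F$, and one must check these edge cases match. The other mildly delicate step is the normal-form reduction of the grammar at the start, but since right-linear productions only ever have terminal strings before a single trailing variable, splitting them is immediate. Everything else is the classical Kleene/Rabin--Scott circle of equivalences, and the constructions are visibly effective, so no decidability subtleties arise.
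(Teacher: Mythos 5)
The paper does not actually prove this theorem: it is stated as a classical fact with references to Chiswell, Harrison, and Hopcroft--Ullman, so there is nothing in the text to compare against. Your argument is exactly the standard Kleene/Rabin--Scott proof found in those references (right-linear grammar $\to$ NFA $\to$ DFA by the subset construction, and back), and it is correct. The only slip is in your set of final states $F = \{q_f\} \cup \{S_0 : S_0 \vdash \varepsilon \in P\}$: after your normalization a variable $T \neq S_0$ may still carry a production $T \vdash \varepsilon$ (e.g.\ arising from an original production $S \vdash aT$ together with $T \vdash \varepsilon$, which derives the word $a$), so $F$ must contain \emph{every} variable with an $\varepsilon$-production, not just $S_0$ --- or else you should first eliminate $\varepsilon$-productions at all non-start variables; either fix is one line.
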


\begin{example}[The free group]
\label{example:free-group}
Let $X$ be a finite set with $\Sigma =  X \cup {X}^{-1}$ and the map $x \mapsto {x}^{-1}$ as in Example~\ref{example:freegroup}.
Let $\A = (Q,\Sigma,\delta,q_0,F)$ be the finite state automaton with state set
$Q = \{q_0\} \cup \{q_{x}: x \in \Sigma\}$, $F = Q$ (all states are terminal), and where the transition function is defined by
\[
\begin{split}
\delta(q_0,x) & = q_x\\
\delta(q_x,y) & =
\begin{cases} q_y & \mbox{ if } y \neq {x}^{-1}\\
\varnothing & \mbox{ otherwise.}
\end{cases}
\end{split}
\]
for all $x,y \in \Sigma$.
It is immediate to see that the language accepted
by the automaton $\A$ consists of all reduced words over the alphabet $\Sigma$, that is,
$L(\A) = L_{\text{\rm red}}(\Sigma)$.
\end{example}

Graphically, one represents a finite automaton $\A = (Q, \Sigma,\delta, q_0,  F)$ as
a labelled graph. (See Section~\ref{s:lg} for more on labelled graphs).
The vertex set is $Q$ and, for every $p \in Q$
and $a \in \Sigma$, there is an oriented edge from $p$ to $q$, with label $a$,
for all $q \in \delta(p,a)$.
The initial state is denoted by an ingoing arrow into it and a double circle is drawn around each final state.
In Figure~\ref{fig:automaton-free-group} we represented the automaton $\A$ recognizing the language $L_{\text{\rm red}}(\Sigma)$ of reduced words on $\{x,y, x^{-1}, y^{-1}\}$.

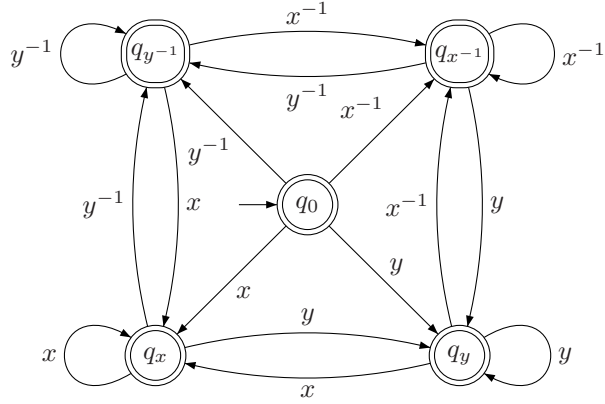
\begin{figure}[h!]
\begin{center}
\begin{picture}(0,50)(0,-20)
\node[Nmarks=r](A)(-20,-20){$q_x$}
\node[Nmarks=r](B)(20,-20){$q_y$}
\node[Nmarks=r,Nh=9,Nw=9](C)(20,20){$q_{x^{-1}}$}
\node[Nmarks=r,Nh=9,Nw=9](D)(-20,20){$q_{y^{-1}}$}
\node[Nmarks=ir](E)(0,0){$q_0$}
\drawedge(E,A){$x$}
\drawedge(E,B){$y$}
\drawedge(E,C){$x^{-1}$}
\drawedge(E,D){$y^{-1}$}
\drawedge[curvedepth=3](A,B){$y$}
\drawedge[curvedepth=3](A,D){$y^{-1}$}
\drawedge[curvedepth=3](B,A){$x$}
\drawedge[curvedepth=3](D,A){$x$}
\drawedge[curvedepth=3](B,C){$x^{-1}$}
\drawedge[curvedepth=3](C,B){$y$}
\drawedge[curvedepth=3](C,D){$y^{-1}$}
\drawedge[curvedepth=3](D,C){$x^{-1}$}
\drawloop[loopangle=180](A){$x$}
\drawloop[loopangle=0](B){$y$}
\drawloop[loopangle=0](C){$x^{-1}$}
\drawloop[loopangle=180](D){$y^{-1}$}
\end{picture}
\end{center}
\caption{The finite automaton accepting the reduced words of $F_{\{x,y\}}$.}\label{fig:automaton-free-group}
\end{figure}


\subsection{Pushdown automata}
A \emph{pushdown automaton} is a $7$-tuple
$\MM = (Q,\Sigma,Z, \delta, q_0, F, z_0)$, where $Q$ is a nonempty finite set of \emph{states}, $\Sigma$ is a finite alphabet, called
the \emph{input alphabet}, $Z$ is a finite set of \emph{stack symbols}, $q_0 \in Q$ is the \emph{initial state},
$F \subset Q$ is the set of \emph{final states}, and $z_0 \in Z \cup \{\varepsilon\}$ is the \emph{start symbol}.
Finally, the \emph{transition function} is a map
$$\delta \colon Q \times (\Sigma \cup \{\varepsilon\}) \times (Z\cup \{\varepsilon \}) \to \PP_{\text{\rm fin}}(Q \times Z^*)
$$
where
$\PP_{\text{\rm fin}}(Q \times Z^*)$ stands for the set of all finite subsets of $Q \times Z^*$.

\begin{figure}[h!]
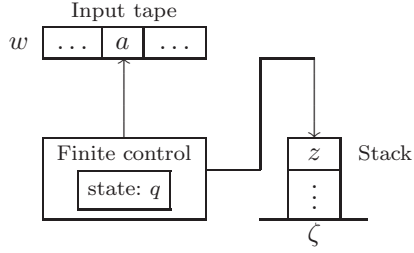


$$
\begin{array}{rcccccccccl}
&\multicolumn{3}{c}{\textup{\footnotesize Input tape}}&&&&&&&\\
\cline{2-4}
w&\multicolumn{1}{|c}{\dots} & \multicolumn{1}{|c}{a} & \multicolumn{1}{|c|}{\dots} &&&&&&&\\
\cline{2-4}\cline{7-8}
&\multicolumn{3}{c}{{\Bigg \uparrow}}&&&\multicolumn{1}{|c}{}&\multicolumn{2}{c}{{\Bigg \downarrow}}&&\\
\cline{2-4}\cline{8-9}
&\multicolumn{3}{|l|}{\textup{\footnotesize Finite control}}&&& \multicolumn{1}{|c}{}& \multicolumn{2}{|c|}{z} && \hspace{-0.3cm}\textup{\footnotesize Stack}\\
\cline{5-6}\cline{8-9}
&\multicolumn{3}{|c|}{\framebox(12,5){\mbox{\footnotesize state:\ $q$}}}&&&&\multicolumn{2}{|c|}{\vdots}&&\\
\cline{2-4}\cline{7-10}
&&&&&&&\multicolumn{2}{c}{\zeta}&&\\
\end{array}
$$
\caption{Representation of a pushdown automata. The input tape contains the word $w$ and its current letter is $a$. The stack contains the word $\zeta$ starting by the letter $z$.}\label{fig:pda}
\end{figure}

The automaton is represented in Figure~\ref{fig:pda} and works in the following way.   The automaton
reads a word $w \in \Sigma^*$ from the  input tape, letter by letter, from
left to right. At any time, it is in some state
$q \in Q$, and the stack contains a word $\zeta \in Z^*$.
If the current letter of $w$ is $a$, the state is $q$ and the top
symbol of the stack word $\zeta$ is $z$, then it performs one of the following
transitions:
\begin{enumerate}[\it (i)]
\item $\MM$ can move to the next position on the input tape.  If the letter read is $a$, $\MM$ selects some $(q',\zeta') \in \delta(q,a,z)$, changes to state $q'$,
   and replaces the rightmost symbol $z$ of $\zeta$ by $\zeta'$. If there are no more letters on the input tape the machine halts.

or, without advancing the tape,

\item $\MM$ can select some $(q',\zeta') \in \delta(q,\varepsilon,z)$, changes to state $q'$,
remain at the current position on the input tape  and replace the rightmost symbol $z$ of $\zeta$ by $\zeta'$.
Note that $\MM$ can make several successive moves of this type without advancing the tape.
Transitions of this type are called \emph{$\epsilon$-transitions}.

\end{enumerate}

If both $\delta(q,a,z)$ and $\delta(q,\varepsilon,z)$ are empty then $\MM$ halts.

Note that, in  general, a pushdown automaton is \emph{nondeterministic}
in the sense that it has more than one choice of a possible transition.
A  pushdown automaton $\MM$ is  \emph{deterministic} if for any
$q \in Q$, $a \in \Sigma$ and  $z \in Z \cup \{\varepsilon\}$, it has at most
one option of what to do next, that is,
$$
|\delta(q,a,z)| + |\delta(q,\varepsilon,z)| \leq 1.
$$

Since we are interested in groups, our convention is that the automaton is
allowed to continue to work when the stack is
empty, i.e., when $\zeta = \varepsilon$. Then the automaton acts
in the same way as before, by changing to state $q'$ and putting $\zeta'$ in the stack if it advances the tape and selects
$(q',\zeta') \in \delta(q,a,\varepsilon)$ in case {\it (i)}, or by  making an $\epsilon$-transition
$(q',\zeta') \in \delta(q,\varepsilon,\varepsilon)$ in case {\it (ii)}.
This convention is different from that of many authors, for example~\cite{HU},
who require  the automaton to halt on an empty stack.

Let $w \in \Sigma^*$, $q \in Q$, and $\zeta \in Z^*$. We write
$\MM \underset{w}{\overset{*}\vdash} (q, \zeta)$ if,  starting at the
initial state $q_0$ and with only $z_0$ in the stack, it is possible for the
automaton $\MM$ (after finitely many transitions) to be in state $q$ with $\zeta$ written on the stack, after reading the input $w$. If $q \in F$ and $\zeta = \varepsilon$ we say
that $\MM$ \emph{accepts} $w$. The \emph{language accepted} by $\MM$ is then defined by
$$
L(\MM) := \{w \in \Sigma^*: \MM \underset{w}{\overset{*}\vdash} (q, \varepsilon) \mbox{ for some } q \in F\}.
$$

\begin{example}
Every finite automaton $\A$ may be viewed as a pushdown automaton. Indeed, if $\A = (Q,\Sigma,\delta,q_0,F)$, consider the pushdown automaton
$\MM = (Q,\Sigma,Z,\delta',q_0,F,\varepsilon)$, where $Z = \varnothing$
and the transition function $\delta' \colon Q \times (\Sigma \cup \{\varepsilon\}) \times \{\varepsilon\} \to \PP_{\text{\rm fin}}(Q \times \{\varepsilon\})$ is defined by setting
$$
\delta'(q,a,\varepsilon) =  \{(q',\varepsilon): q' \in \delta(q,a)\}
$$
for all $q \in Q$ and $a \in \Sigma$.
It is clear that $L(\A) = L(\MM)$.
Note that $\MM$ is deterministic whenever $\A$ is deterministic.
\end{example}

The following is a fundamental characterization of context-free languages (see~\cite{chiswell, harrison, HU}).
\begin{theorem}[Chomsky]
\label{t:characterization--free}
Let $\Sigma$ be a finite alphabet and $L \subset \Sigma^*$ be a language.
Then $L$ is context-free (that is, it is generated by a context-free grammar) if and only if it is accepted by a pushdown automaton.
Moreover, $L$ is unambiguous if and only if it is accepted by a deterministic
pushdown automaton.
\end{theorem}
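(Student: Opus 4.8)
The plan is to prove the first equivalence by two explicit, effective constructions --- a grammar-to-automaton one and an automaton-to-grammar one --- and then to observe that both constructions are faithful to the \emph{number} of derivations, which gives the ``moreover'' clause. Throughout I will use that, for a fixed word $w$, the leftmost derivations $S_0\thens w$, the rightmost derivations $S_0\thens w$, and the derivation trees with yield $w$ are in mutual bijection, so that counting any one of them counts all of them.

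\emph{From a grammar to an automaton.} Given a context-free grammar $\GG=(V,\Sigma,P,S_0)$ I would take the one-state pushdown automaton $\MM=(\{q_0\},\,\Sigma,\,V\cup\Sigma,\,\delta,\,q_0,\,\{q_0\},\,S_0)$ that carries out a top-down (leftmost) parse: for each production $S\vdash u_1u_2\cdots u_k$ put $(q_0,\,u_ku_{k-1}\cdots u_1)\in\delta(q_0,\varepsilon,S)$, so that expanding an exposed variable puts the leftmost symbol of the chosen right-hand side on top of the stack, and for each terminal $a\in\Sigma$ put $(q_0,\varepsilon)\in\delta(q_0,a,a)$, so that an exposed terminal is popped precisely when it matches the current input letter. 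No transition is defined on an empty stack, so the paper's ``continue on the empty stack'' convention is vacuous here: $\MM$ halts exactly when the stack first empties, and, having a single state which is final, $\MM$ accepts $w$ iff it has a computation that reads all of $w$ and empties the stack. A routine induction on the number of steps establishes the invariant ``(input already read)$\,\cdot\,$(stack contents, read from bottom to top) $=$ (current sentential form with its leading terminals stripped)'', and hence a bijection between the leftmost derivations of $w$ from $S_0$ in $\GG$ and the accepting computations of $\MM$ on $w$; in particular $L(\MM)=L(\GG)$.

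\emph{From an automaton to a grammar.} This is the more technical direction. Given $\MM=(Q,\Sigma,Z,\delta,q_0,F,z_0)$ I would first normalize it: adjoin a fresh bottom-of-stack symbol $\bot$ (never inspected by any original transition and kept at the bottom throughout every run), make $\bot$ the start stack symbol, adjoin a new initial state $q_{\mathrm{in}}$ whose only move is an $\varepsilon$-transition pushing $z_0$ onto $\bot$ (pushing nothing if $z_0=\varepsilon$) and passing to $q_0$, replace each move the original would make on an empty stack by the same move performed on the exposed $\bot$ with $\bot$ retained, and adjoin a fresh state $q_f$ reached by a single $\varepsilon$-transition that is available from every state of $F$ exactly when $\bot$ is exposed and that pops $\bot$. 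After this, $\MM$ never inspects a genuinely empty stack and acceptance by ``final state and empty stack'' has become reachability of $q_f$. Now apply the \emph{triple construction}: take variables $S_0$ together with $[p,z,q]$ for all $p,q\in Q$ and stack symbols $z$, the intended meaning of $[p,z,q]$ being ``from state $p$ with $z$ exposed, $\MM$ has a computation that reads the generated word, never inspects what lies below $z$, and ends in state $q$ having net-removed $z$''; put $S_0\vdash[q_{\mathrm{in}},\bot,q_f]$ and, for each transition $(q',\,z_1z_2\cdots z_m)\in\delta(p,a,z)$ with $a\in\Sigma\cup\{\varepsilon\}$, the productions
$$
[p,z,q]\ \vdash\ a\,[q',z_m,r_{m-1}]\,[r_{m-1},z_{m-1},r_{m-2}]\cdots[r_1,z_1,q],
$$
ranging over all intermediate states $r_1,\dots,r_{m-1}\in Q$ (this being $[p,z,q']\vdash a$ when $m=0$). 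A double induction --- on computation length in one direction, on derivation length in the other --- proves $[p,z,q]\thens w$ iff $\MM$ admits such a computation on input $w$; hence $L(\GG)=L(\MM)$, so $L(\MM)$ is context-free, and together with the first construction this establishes that $L$ is context-free if and only if it is accepted by a pushdown automaton.

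\emph{The ``moreover'' clause.} A deterministic pushdown automaton has at most one computation on any input, hence at most one accepting one, so the substance of the claim is that this ``at most one accepting computation'' property of automata matches the ``at most one rightmost derivation'' property of grammars. In the grammar-to-automaton direction the bijection obtained above between leftmost derivations of $w$ and accepting computations of $\MM$ on $w$ already shows that an unambiguous grammar yields an automaton with at most one accepting computation per accepted word. In the automaton-to-grammar direction one must verify that the triple construction, applied to such an automaton, returns an \emph{unambiguous} grammar: here the key point is that in the production displayed above the intermediate states $r_i$ are forced by the computation the derivation encodes, so two distinct rightmost derivations of a single word would unwind to two distinct accepting computations. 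I expect this bookkeeping --- keeping the correspondence between derivation trees and accepting computations exactly one-to-one in the presence of $\varepsilon$-moves and of the ``guessed'' intermediate states --- to be the main obstacle, rather than the (routine but lengthy) language-equality inductions or the $\bot$-marker normalization that accommodates the paper's empty-stack convention. (The classical results in this circle are most naturally phrased with \emph{unambiguous} pushdown automata, of which deterministic ones form a proper subclass, so it is the passage to genuine determinism that makes this endpoint delicate.)
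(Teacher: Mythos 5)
The paper does not actually prove this theorem: it is quoted as a classical result with a pointer to \cite{chiswell, harrison, HU}, so there is no in-paper argument to measure yours against. Your treatment of the main equivalence is the standard one and is correct in outline: the one-state top-down parser in one direction (with the right-hand sides reversed to respect the paper's ``top of stack is the rightmost symbol'' convention, and with the correct observation that the paper's ``keep running on an empty stack'' convention is vacuous for an automaton with no moves defined on the empty stack), and the triple construction preceded by a bottom-marker normalization in the other. The bijection between leftmost derivations and accepting computations, and the fact that the intermediate states $r_i$ in a triple-construction production are forced by the computation being encoded, are exactly the right points to isolate for the ambiguity bookkeeping, and your proof that a deterministic automaton yields an unambiguous grammar is sound.

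The one genuine issue is the ``moreover'' clause, and your closing parenthetical has in fact put its finger on it: as literally stated, the implication ``$L$ unambiguous $\Rightarrow$ $L$ accepted by a deterministic pushdown automaton'' is false, and no amount of bookkeeping will rescue it. Your grammar-to-automaton construction applied to an unambiguous grammar produces an automaton with at most one accepting computation per word --- an \emph{unambiguous} pushdown automaton --- but not a deterministic one, and this is unavoidable: the palindrome language $L_{\text{\rm pal}}(\{a,b\})$ of Example~\ref{e:palindromes} is generated by an unambiguous (indeed linear) grammar, yet it is a classical fact that it is accepted by no deterministic pushdown automaton, since recognition requires guessing the midpoint of the input. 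The correct classical statement is that $L$ is unambiguous if and only if it is accepted by an \emph{unambiguous} pushdown automaton, while deterministic acceptance implies unambiguity but not conversely; your argument proves everything that is true here, and the remaining half of the ``moreover'' clause should be regarded as a slip in the statement rather than as a gap in your proof.
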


Note that since there exist inherently ambiguous context-free languages (which therefore
are not accepted by any deterministic pushdown automaton), it follows  that nondeterministic pushdown automata
are strictly more powerful than deterministic ones.

\begin{example}[The Dyck language revisited]
Let $n \ge 1$ and $\Sigma = \{a_1, \bar{a}_{1}, a_2, \bar a_2,$ $\dots,$ $a_n, \bar{a}_{n}\}$.
Consider the deterministic pushdown automaton $\MM = (Q,\Sigma,Z, \delta,$ $q_0, F, z_0)$
with $Q = \{q_0\} = F$, $Z = \Sigma$, $z_0 = \varepsilon$ and
$\delta \colon \{q_0\} \times (\Sigma \cup \{\varepsilon\}) \times (\Sigma \cup \{\varepsilon\}) \to \PP_{\text{\rm fin}}(\{q_0\} \times \Sigma^*)$
defined by setting
$$
\delta(q_0,a,z) =
\begin{cases}
 \{(q_0,\varepsilon)\} & \mbox{ if } a = {\bar z}\\
\{(q_0,za)\} & \mbox{ otherwise}
\end{cases}
$$
for all $a, z \in \Sigma \cup \{\varepsilon\}$. (We use
the convention that $\bar \varepsilon = \varepsilon$.)
Then it is easy to check that $L(\MM)$ is the Dyck language defined in Example~\ref{example:dyck}.
\end{example}

\subsection{Turing machines, computable and computably enumerable languages}
\label{s:Turing}
One of the great accomplishments of twentieth century mathematics was the formalization of the idea of  being ``computable''.  Probably the clearest model
is Turing's concept  of a \emph{Turing machine}~\cite{turing},
which one can consider as an idealized digital computer.  Several other
definitions were proposed  in the 1930's and 1940's  and all of these  definitions  have  been shown to
be equivalent.  The Turing machine model of computation is  the one still used in studying computational complexity, where
one wants to investigate how difficult it is to calculate something.

\begin{thesis}[The Church-Turing Thesis]
Any function intuitively thought to be computable is computable by a Turing machine.
\end{thesis}

Seventy years of research have led to the general acceptance of the Church-Turing Thesis.  By
the word ``algorithm'' we therefore mean a  Turing machine.

We give a brief description of how a Turing machine works. This description is illustrated in Figure~\ref{fig:tur}.  For a careful detailed discussion see~\cite{chiswell,cooper,HU}. A Turing machine $\T$ consists of the following:
\begin{itemize}
\item A \emph{tape} which is divided into consecutive \emph{cells} or \emph{squares}  and which is infinite to the right.
Thus the Turing machine always has enough tape for any  computation, that is, it has  unlimited memory.
There is a \emph{tape alphabet} $\Gamma$ which contains  a special  \emph{blank symbol} $\blank$.  The \emph{input alphabet} is
$\Sigma \subset \Gamma \setminus \{\blank\}$.
Each cell contains a symbol from the tape alphabet and
initially, all but finitely many cells contain the blank symbol $\blank$.
\item A \emph{reading head} that can read and write symbols on the tape and then move one cell to the right
or one cell to the left. Symbols $L$ and $R$ stand for ``left'' and ``right'', respectively.
\item A finite set $Q$ of \emph{control states} with an  \emph{initial state} $q_0 \in Q$ and a \emph{halting state}  $H \in Q$.
\item A   \emph{program} or  \emph{transition function}  $\delta: Q \times \Gamma  \to Q \times \Gamma \times \{L,R\}$.
 There is only one type of instruction and  Turing machines are  thus the ultimate in ``reduced instruction set architecture''.
If  $\delta(q,\gamma) = (q',\gamma',L/R)$ then the machine immediately halts if $q' = H$. Otherwise the machine does the following operations in sequence:
\begin{itemize}
\item replace the symbol $\gamma$ by the  symbol $\gamma'$, which may be the same as $\gamma$ or may be the blank $\blank$,
\item move the reading head one cell  to the left (on $L$) or one cell  to the right (on $R$),
\item assume the new state $q' \in Q$.
\end{itemize}
\end{itemize}
\begin{figure}[h!]
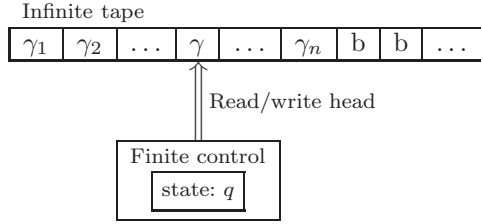

$$
\begin{array}{ccccccccc}
\multicolumn{9}{l}{\textup{\footnotesize Infinite tape}}\\
\hline
\multicolumn{1}{|c}{\gamma_1} & \multicolumn{1}{|c}{\gamma_2} & \multicolumn{1}{|c}{\dots} & \multicolumn{1}{|c}{\gamma} & \multicolumn{1}{|c}{\dots} & \multicolumn{1}{|c}{\gamma_n} & \multicolumn{1}{|c}{\blank} & \multicolumn{1}{|c|}{\blank} & \dots\\
\hline
&&&  \multicolumn{6}{l}{{\Bigg \Uparrow}\textup{\footnotesize Read/write head}}\\
\cline{3-5}
&& \multicolumn{3}{|l|}{\textup{\footnotesize Finite control}}&&&&\\
&& \multicolumn{3}{|c|}{\framebox(12,5){\mbox{\footnotesize state:\ $q$}}}&&&&\\
\cline{3-5}
\end{array}
$$
\caption{Representation of a Turing machine.}\label{fig:tur}
\end{figure}

A word $w \in \Sigma^*$ is \emph{written}
on the tape if it occupies the leftmost cells of the tape.
It is understood that all the cells that are on the right of the cell containing the last letter of $w$  contain the
blank symbol $\blank$.

Turing machines can be regarded either as \emph{calculators} of functions or as \emph{enumerators}.

\begin{definition}  Let $\Sigma_1$ and $\Sigma_2$ be finite alphabets.
A function $f : \Sigma_1^* \to \Sigma_2^*$ is \emph{computable} if
there exists a Turing machine $\T$ which, when started in its initial state with the reading head at the left end of the tape
and a word $w \in \Sigma_1^*$ written on the tape, eventually
halts  with  $f(w)\in \Sigma_2^*$ written on the tape.

A set $L \subseteq \Sigma^*$ is \emph{computable} if its characteristic function $\chi_L : \Sigma^* \to \{0,1\}$ is computable.
\end{definition}

Note that a Turing machine which calculates a function is required to halt on all inputs.  In general,  a Turing machine
with input alphabet $\Sigma$ may not halt on all inputs.

\begin{definition} A set $L \subseteq \Sigma^*$ is \emph{computably enumerable} if there exists a Turing machine $\T$ with
input alphabet $\Sigma$ such that $\T$ halts on input $w$ if and only if $w \in L$. We say that $\T$ \emph{enumerates} or \emph{accepts} $L$.
\end{definition}

Thus computably enumerable languages are exactly the halting sets of Turing machines.  The following lemma is a basic fact about
computability.

\begin{lemma}  A set $L \subseteq \Sigma^*$ is computable if and only if both $L$ and
its complement $\lnot L = \Sigma^* \setminus L$ are computably enumerable.
\end{lemma}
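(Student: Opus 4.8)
The plan is to prove both implications directly from the definitions of computable and computably enumerable sets.

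For the forward direction, suppose $L$ is computable, so there is a Turing machine $\T_{\chi}$ that halts on every input $w \in \Sigma^*$ with $\chi_L(w) \in \{0,1\}$ written on the tape. First I would build a machine $\T_L$ enumerating $L$: run $\T_\chi$ on $w$, then inspect the output; if it is $1$, enter the halting state $H$; if it is $0$, enter an infinite loop (e.g. a state that repeatedly moves right without ever entering $H$). Then $\T_L$ halts on $w$ if and only if $w \in L$, so $L$ is computably enumerable. Symmetrically, a machine $\T_{\lnot L}$ enumerating $\lnot L$ is obtained by swapping the roles of $0$ and $1$: loop on output $1$ and halt on output $0$. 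Hence both $L$ and $\lnot L$ are computably enumerable.

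For the reverse direction, suppose $\T_1$ enumerates $L$ and $\T_2$ enumerates $\lnot L$; I must produce a machine computing $\chi_L$, and here the key idea is \emph{dovetailing} (time-sharing simulation), since I cannot simply run $\T_1$ to completion and then run $\T_2$ — $\T_1$ might never halt. On input $w$, the new machine $\T$ simulates $\T_1$ and $\T_2$ on $w$ in parallel, alternating one step of each; since $w$ lies in exactly one of $L$, $\lnot L$, exactly one of the two simulations eventually halts. When $\T_1$ halts first, $\T$ erases the tape, writes $1$, and halts; when $\T_2$ halts first, $\T$ writes $0$ and halts. Thus $\T$ halts on every input and outputs $\chi_L(w)$, so $L$ is computable.

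The only genuinely delicate point is the implementation of the parallel simulation on a single one-tape Turing machine: one must encode the two configurations of $\T_1$ and $\T_2$ together with their head positions and current states on one tape, using a suitably enlarged tape alphabet (for instance track markers and primed copies of symbols), and arrange the finite control to advance each simulated machine by one step in turn. This is a completely routine — if tedious — construction, and I would merely indicate it rather than carry it out; alternatively, one may appeal to the standard fact that a multi-tape Turing machine can be simulated by a one-tape machine, run $\T_1$ and $\T_2$ on separate simulated tapes, and invoke that simulation theorem. By the Church--Turing Thesis this informal description already suffices. This completes both directions.
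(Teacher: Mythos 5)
Your proof is correct and takes essentially the same approach as the paper's: the forward direction is identical (halt when $\chi_L(w)=1$, loop when it is $0$, and symmetrically for $\lnot L$), and your dovetailed parallel simulation in the reverse direction is only an implementation variant of the paper's ``bounded simulation'' (re-running both $\T_1$ and $\T_2$ for $n$ steps for successive $n$), both serving the same purpose of detecting whichever machine halts.
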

\begin{proof}  A basic principle of constructing  Turing machines is that a Turing machine $\T$ can be always be used as
a subroutine in a larger machine $\widehat{\T}$. If $L$ is computable, let $\T$ compute the characteristic function $\chi_L$ of $L$.
The machine $\widehat{\T}$ enumerating $L$ works as follows.  On input $w$, the machine $\widehat{\T}$ uses $\T$ to compute
$\chi_L(w)$.  If $w \in L$ then $\widehat{\T}$ halts.  If $w \notin L$ then $\widehat{\T}$ goes into a loop and never halts.
The machine enumerating the complement $\lnot L$ works similarly.

Conversely, suppose that $\T_1$ and $\T_2$ enumerate  $L$ and $\lnot L$ respectively. The machine $\overline{\T}$ computing $L$
uses the basic technique of ``bounded simulation''.  On input $w$, the machine $\overline{\T}$ begins successively enumerating positive
integers $n$.  When $n$ is enumerated, $\overline{\T}$ simulates both $\T_1$ and $\T_2$ on input $w$ for $n$ steps and sees if either machine
halts in $n$ steps.   Since $L$ and $\lnot L$ are complements, exactly one of $\T_1$ or $\T_2$ will eventually halt on input $w$.
When one of them halts,   $\overline{\T}$ then erases its tape and writes $1$ if $\T_1$ halted and $0$ if $\T_2$ halted.
\end{proof}

Note that in order to be able to prove that a problem is \emph{not} computable, it is necessary to have a complete
list of all possible means of computation.   We can assume that the input alphabet of a Turing machine contains the symbols $0$ and $1$.
It is not difficult to effectively assign a unique binary number $g(\T)$ to each Turing machine $\T$ (see~\cite{HU}).
The \emph{Halting Problem} for Turing machines is the following problem:  given a  Turing machine $\T$ and an input $w \in \{0,1\}^*$,
does the machine $\T$ halt on input $w$?  Turing~\cite{turing} showed that the Halting Problem is not computable.
Once one has a non-computable language  $L$, one can use ``reduction'' to show that a language $L'$ is not computable by
showing that $L$ is reducible to $L'$ in the sense that if $L'$ were computable then $L$ would be computable.  All non-computability results eventually go back to the Halting Problem.

\section{Finitely generated groups,  Cayley graphs, and the Word Problem}\label{sec:CG}
\subsection{Labelled graphs}
\label{s:lg}
A \emph{labelled graph} is a triple $\Gamma = (V,E,\Sigma)$,  where $V = V(\Gamma)$ is the set of \emph{vertices},
$\Sigma$ is a finite {alphabet}, and $E = E(\Gamma) \subset V \times \Sigma \times V$ is the set of
\emph{oriented, labelled edges}.

Let $\Gamma = (V,E,\Sigma)$ be a labelled graph.

We say that $\Gamma$ is \emph{finite} if its vertex set $V$ is finite and thus the edge set $E$ is also finite.

Given an edge $e= (u,a,v) \in E$ its \emph{label} is $\lambda(e) := a \in \Sigma$, its \emph{initial vertex} is $o(e) := u \in V$, and
its \emph{terminal vertex} is $t(e) := v \in V$.  We say that $e$ is \emph{outgoing}
from $u$ and \emph{ingoing} into $v$.  An edge $e$ can be visualized as an arrow from $o(e)$ to $t(e)$.

For $v \in V$ we denote by $\partial^o(v) \in [0,\infty]$
(resp. $\partial^t(v) \in [0,\infty]$) the number (possibly infinite) of edges outgoing from (resp. ingoing into) $v$.
The quantity $\partial(v) = \partial^o(v) + \partial^t(v) \in [0,\infty]$ is  the \emph{degree} of $v$.
An edge of the form $(v,a,v)$ is called a \emph{loop} at $v$ and is both an outgoing edge and an ingoing edge at $v$,
and so contributes $2$ to $\partial(v)$.
If $\partial(v) < \infty$ for all $v \in V$ one says that $\Gamma$ is \emph{locally finite}.
If the degrees of the vertices of $\Gamma$ are uniformly bounded, that is $\sup_{v \in V} \partial(v) < \infty$,
one says that $\Gamma$ has \emph{bounded degree}.

Suppose that $\Sigma$ is equipped with an involution $a \mapsto \bar{a}$. We then say that
$\Gamma$ is \emph{symmetric} if for each edge $e=(u,a,v) \in E$,
the \emph{inverse edge} $e^{-1}=(v,\bar{a},u)$ also belongs to $E$.
The  drawing convention\label{drawing convention} for symmetric graphs is that one draws only one directed edge (with the corresponding label) choosing between $e$ and $e^{-1}$.

Note that if $\Gamma$ is symmetric, we clearly have $\partial^o(v) = \partial^t(v)$ for each $v \in V$. If, in addition, there exists $d \in \N$ such that $d = \partial^o(v) = \partial^t(v)$ for all $v \in V$, one says that $\Gamma$ is \emph{regular} of \emph{degree} $d$.

We say that $\Gamma$ is \emph{deterministic} if at every vertex all outgoing edges
have distinct labels.

Note that our definition of a labelled graph allows \emph{multiple edges}, i.e., distinct edges of the form $e_1 = (u, a_1, v)$ and
$e_2 = (u, a_2, v)$, but this implies that $a_1 \neq a_2$. Thus, two edges must coincide if they have the  same initial vertex, the same terminal vertex, and the same label.

A \emph{subgraph} of $\Gamma$ is a labelled graph $\overline{\Gamma} = (\overline{V},\overline{E},\overline{\Sigma})$
such that $\overline{V} \subset V$, $\overline{E} \subset E$ and $\overline{\Sigma} \subset \Sigma$.

Let $\Gamma' = (V',E',\Sigma)$ be another labelled graph with the same label alphabet $\Sigma$.
A \emph{labelled graph-homomorphism} from $\Gamma$ to $\Gamma'$ is a map $\varphi \colon V \to V'$
such that $(\varphi(u),a,\varphi(v)) \in E'$ for all $(u,a,v) \in E$.
A \emph{labelled graph-isomorphism}  from $\Gamma$ to $\Gamma'$ is a bijective labelled graph-homomorphism from
$\Gamma$ to $\Gamma'$ such that the inverse map $\varphi^{-1} \colon V' \to V$ is also a labelled graph-homomorphism
from $\Gamma'$ to $\Gamma$.  Note that if $\varphi$ is a labelled graph-isomorphism  from $\Gamma$ to $\Gamma'$,
then the map $\psi \colon E \to E'$ defined by $\psi(u,a,v) = (\varphi(u),a,\varphi(v))$, for all $(u,a,v) \in E$,
is bijective with inverse map $\psi^{-1} \colon E' \to E$
given by $\psi^{-1}(u',a,v') = (\varphi^{-1}(u'),a,\varphi^{-1}(v'))$, for all $(u',a,v') \in E'$.

A \emph{path} in $\Gamma$ is a sequence $\pi = (e_1, e_2, \dots, e_n)$ of edges such
that $ o(e_{i+1}) = t(e_i) $ for $i=1,2,\ldots, n-1$.  We extend our notation for initial and terminal vertices to paths.  The vertex  $o(\pi) := o(e_1)$  is the  \emph{initial} vertex of $\pi$ and  $t(\pi) := t(e_n)$ is the \emph{terminal} vertex of $\pi$. We then says
that $\pi$ \emph{starts} at $o(\pi)$ and \emph{ends} at $t(\pi)$, equivalently it \emph{connects} $o(\pi)$ to $t(\pi)$. An edge $e \in E$ such that $o(e) = t(e)$ is called
a \emph{loop}. For every vertex $v \in V$, we also allow the \emph{empty path} starting and ending at $v$.

One says that $\Gamma$ is \emph{strongly connected} provided that for all vertices $u, v \in V$ there exists a path connecting $u$ to $v$. If $\Gamma$ is symmetric, the (obviously reflexive and transitive) relation in $V$ defined by $u \sim v$ provided that there exists a path in $\Gamma$ connecting $u$ to $v$ is also symmetric and therefore an equivalence
relation. Then the corresponding equivalence classes are called the \emph{connected components} of $\Gamma$; clearly, $\Gamma$ is strongly connected if and only if there exists a unique such a connected component.

Let $\pi = (e_1, e_2, \ldots, e_n)$ be a path.
The number  $|\pi| =n$ of edges is the \emph{length} of the path.
The \emph{label} of $\pi$ is  $\lambda(\pi) := \lambda(e_1) \lambda(e_2) \cdots \lambda(e_n) \in \Sigma^*$.
The empty path has length $0$ and is labelled by the empty word  $\varepsilon$.
If $t(\pi) = o(\pi)$ one says that $\pi$ is \emph{closed}.
If the vertices $o(e_1),t(e_1),t(e_2),\ldots,$ $t(e_n)$ are all distinct, then the path is called \emph{simple}.
If $\pi$ is closed, contains an edge  and its vertices are all distinct with the exception of $o(e_1)=t(e_n)$, then $\pi$ is called a \emph{cycle}.


Denote by $\Pi_{u,v}(\Gamma)$ the set of all paths $\pi$ in $\Gamma$ with initial
vertex $o(\pi) = u$ and terminal vertex $t(\pi) = v$.  More generally, given a subset $F \subset V$
we set $\Pi_{u,F}(\Gamma) := \bigcup_{v \in F} \Pi_{u,v}(\Gamma)$.
For $u \in V$ and $F \subset V$ we define the language
$$
L_{u,F}(\Gamma) := \{\lambda(\pi): \pi \in \Pi_{u,F}(\Gamma)\} \subset \Sigma^*.
$$
Note that $L_{u,F}(\Gamma)$ may  be empty.
\par
Suppose that a given vertex $v_0 \in V$ of $\Gamma$ is fixed as  \emph{origin} (or \emph{root} or \emph{basepoint}).
One then says that $\Gamma =(V,E,\Sigma,v_0)$ is a \emph{rooted} labelled graph.
A \emph{rooted} labelled graph-homomorphism (resp. \emph{rooted} labelled graph-isomorphism) from a rooted labelled graph
$\Gamma$ into a rooted labelled graph $\Gamma'$ is a labelled graph-homomorphism (resp. labelled graph-isomorphism)
$\varphi \colon V \to V'$ such that $\varphi(v_0) = v_0'$, where $v_0' \in V'$ is the root of $\Gamma'$.

\begin{example}[The rooted infinite binary tree $T_2$]
Let $\Sigma = \{0,1\}$.
Consider the rooted labelled graph $\Gamma = (\Sigma^*,E,\Sigma,\varepsilon)$
where $E = \{(v,a,va): v \in \Sigma^*, a \in \Sigma\}$.
The vertex corresponding to the empty word $\varepsilon$ is the root of $\Gamma$.
Note that for every vertex $v \in V$ one has $\partial^o(v) = 2$.

The graph $\Gamma$ is a rooted, directed tree called the \emph{rooted infinite binary tree} and it is denoted by $T_2$.
Figure~\ref{binaryTREE} illustrates it.
\end{example}
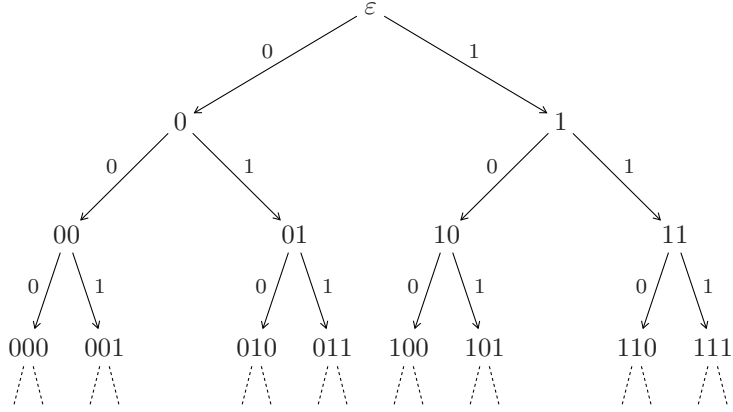
\begin{figure}[h!]
\begin{center}
\begin{picture}(0,60)(0,0)
\gasset{Nframe=n,Nadjust=w,Nh=5,ATangle=30,ATLength=1,ATlength=0,AHangle=30,AHLength=1,AHlength=0}
\node(E)(0,55){$\varepsilon$}
\node(0)(-25,40){$0$}
\node(1)(25,40){$1$}
\node(00)(-40,25){$00$}
\node(01)(-10,25){$01$}
\node(10)(10,25){$10$}
\node(11)(40,25){$11$}
\node(000)(-45,10){$000$}
\node(001)(-35,10){$001$}
\node(010)(-15,10){$010$}
\node(011)(-5,10){$011$}
\node(100)(5,10){$100$}
\node(101)(15,10){$101$}
\node(110)(35,10){$110$}
\node(111)(45,10){$111$}

\node(0000)(-42.5,0){$$}
\node(0001)(-47.5,0){$$}
\node(0010)(-37.5,0){$$}
\node(0011)(-32.5,0){$$}
\node(0100)(-17.5,0){$$}
\node(0101)(-12.5,0){$$}
\node(0110)(-7.5,0){$$}
\node(0111)(-2.5,0){$$}
\node(1000)(2.5,0){$$}
\node(1001)(7.5,0){$$}
\node(1010)(12.5,0){$$}
\node(1011)(17.5,0){$$}
\node(1100)(32.5,0){$$}
\node(1101)(37.5,0){$$}
\node(1110)(42.5,0){$$}
\node(1111)(47.5,0){$$}

\footnotesize
\drawedge[ATnb=1,AHnb=0](0,E){$0$}
\drawedge[ATnb=1,AHnb=0](00,0){$0$}
\drawedge[ATnb=1,AHnb=0](10,1){$0$}
\drawedge[ATnb=1,AHnb=0](000,00){$0$}
\drawedge[ATnb=1,AHnb=0](100,10){$0$}
\drawedge[ATnb=1,AHnb=0](010,01){$0$}
\drawedge[ATnb=1,AHnb=0](110,11){$0$}
\drawedge(E,1){$1$}
\drawedge(0,01){$1$}
\drawedge(1,11){$1$}
\drawedge(00,001){$1$}
\drawedge(01,011){$1$}
\drawedge(10,101){$1$}
\drawedge(11,111){$1$}

\drawedge[AHnb=0,dash={0.4}0](000,0000){}
\drawedge[AHnb=0,dash={0.4}0](000,0001){}
\drawedge[AHnb=0,dash={0.4}0](001,0010){}
\drawedge[AHnb=0,dash={0.4}0](001,0011){}
\drawedge[AHnb=0,dash={0.4}0](010,0100){}
\drawedge[AHnb=0,dash={0.4}0](010,0101){}
\drawedge[AHnb=0,dash={0.4}0](011,0110){}
\drawedge[AHnb=0,dash={0.4}0](011,0111){}
\drawedge[AHnb=0,dash={0.4}0](100,1000){}
\drawedge[AHnb=0,dash={0.4}0](100,1001){}
\drawedge[AHnb=0,dash={0.4}0](101,1010){}
\drawedge[AHnb=0,dash={0.4}0](101,1011){}
\drawedge[AHnb=0,dash={0.4}0](110,1100){}
\drawedge[AHnb=0,dash={0.4}0](110,1101){}
\drawedge[AHnb=0,dash={0.4}0](111,1110){}
\drawedge[AHnb=0,dash={0.4}0](111,1111){}
\end{picture}
\end{center}
\caption{The rooted infinite binary  tree $T_2$.}\label{binaryTREE}
\end{figure}

\begin{example}[The graph underlying a finite state automaton]
\label{e:underlying}
Let $\A = (Q,\Sigma, \delta, q_0,F)$ be a finite state automaton.
Consider the labelled graph $\Gamma = (V,E,\Sigma)$ where $V = Q$ and $E \subset V \times \Sigma \times V$ is defined by
$$
E = \{(u,a,v): u \in V, a \in \Sigma, \mbox{ and } v \in \delta(u,a)\}.
$$
Note that $\Gamma$ is deterministic if and only if $\A$ is deterministic.
The language $L(\A) \subset \Sigma^*$ accepted by $\A$ can be reinterpreted as the
language consisting of all words of the form $\lambda(\pi)$, where $\pi$ is a path in $\Gamma$ starting at the initial state $q_0$ and terminating at some final state in $F$.  In symbols:
$$
L(\A) = L_{q_0,F}(\Gamma) =  \{\lambda(\pi): \pi \in \Pi_{q_0,F}(\Gamma)\}.
$$
\end{example}

\subsection{Presentations and Cayley Graphs}
\label{s:PCG}

A \emph{finitely generated group presentation} is a pair $\langle X; R \rangle$, where $X$ is a finite set of \emph{generators},
the \emph{group alphabet} is  $\Sigma = X \cup X^{-1}$ where
$X^{-1}$ is a disjoint copy of $X$, and the set $R$ of \emph{defining relators} is a subset of $\Sigma^*$.
We denote by $a \mapsto a^{-1}$ the involutive map on $\Sigma$ exchanging $X$ and $X^{-1}$.

Two words $u,v \in \Sigma^*$ are said to be \emph{equivalent}, written $u \approx v$,
if it is possible to transform $u$ into $v$ by a finite sequence of insertions or
deletions of either the defining relators $r \in R$ or  the \emph{trivial relators} of the form $xx^{-1}$ and  $x^{-1}x$, with $x \in X$.
The concatenation product on the free monoid $\Sigma^*$ (cf. Equation~\eqref{e;concatenation}) induces a group structure on the set $G = \Sigma^*/\approx$ of equivalence classes whose identity element is the class of the
empty word $\varepsilon$. Moreover, if $w = a_1a_2 \cdots a_n \in \Sigma^*$, the inverse of the class of $w$ is the class
of the element $w^{-1} \in \Sigma^*$ defined by $w^{-1} = a_n^{-1} \cdots a_2^{-1}a_1^{-1}$.
One  says that $\langle X; R \rangle$  is a \emph{presentation} of the group $G$ and
one writes $G = \langle X; R \rangle$. When the defining relators $r \in R$ are of the
form $r = u_rv_r^{-1}$ for some $u_r,v_r \in \Sigma^*$ one often writes
$G = \langle X; u_r = v_r, r \in R \rangle$ and  refers to the equations  $u_r = v_r$,
$r \in R$, as the \emph{defining relations}.

A presentation $\langle X; R \rangle$ where both $X$ and the set $R$ of relators is finite is called a
\emph{finite presentation}. A group admitting a finite presentation is called  \emph{finitely presentable}.

Given a presentation $G = \langle X; R \rangle$, if $F_X$ denotes the free group based on $X$ and $N$ is the normal closure
of $R$ in $F_X$ then the  group
homomorphism $F_X \to G$ sending each $x \in X$ to  its $\approx$-equivalence class in
$\Sigma^*$  induces a group isomorphism $F_X/N \to G$.

\begin{example}
\label{ex:mult-table}
\begin{enumerate}[(a)]
\item Let $G = \{g_1,g_2,\ldots,g_n\}$ be a finite group  where $g_1$ is the identity element. The \emph{multiplication table presentation}
of $G$ is the presentation
$$G = \langle g_2,g_3,\ldots,g_n; g_ig_j = g_{k(i,j)}, i,j=2,3,\ldots,n\rangle,$$
where $g_{k(i,j)}$ is  the product of $g_i$ and $g_j$ determined from the multiplication table of $G$.  This example shows that every finite group has a finite presentation.

\item In multiplicative notation, the infinite cyclic group has a presentation
$\Z = \langle x \rangle$ with one generator and no defining relations.

\item[(b')] More generally, the free group based on a finite set $X$ has a presentation
$F_X = \langle X \rangle$ with generating set $X$ and no defining relations.

\item In multiplicative notation, the free abelian group of rank two has a presentation
$$\Z^2 = \langle x,y; [x,y]\rangle,$$ where $[x,y] = x^{-1}y^{-1}xy$ is
the commutator of $x$ and $y$.

\item[(c')] More generally, the free abelian group based on a finite set $X$ has
presentation $$\langle X; [x,y], x,y \in X \rangle.$$ In this case, the normal closure
$N$ of $R = \{[x,y], x,y \in X\}$ in the free group $F_X$ based on $X$ is the \emph{commutator} (or \emph{derived}) subgroup of $F_X$.

\item Let $G = (\Z/2\Z)*(\Z/2\Z)*(\Z/2\Z)*(\Z/2\Z)$ be the free product of four copies of
the group $\Z/2\Z$ with two elements. Let $x,y,z$ and $w$ be the nontrivial
elements in each copy of $\Z/2\Z$ (so that $x = x^{-1}, y = y^{-1}$, etc). Then the
corresponding presentation is $G = \langle x,y,z,w; x^2,y^2,z^2,w^2\rangle$.
\end{enumerate}
\end{example}

The fundamental geometric object associated with a finitely generated group was defined by  Cayley~\cite{cayley} in 1878.

\begin{definition}[Cayley graph]
\label{d:cayley}
Let $G = \langle X;R \rangle$  be a finitely generated group.
The \emph{Cayley graph} of $G$ with respect to the presentation $\langle X; R \rangle$ is the labelled graph $\Gamma = \Gamma(G:X;R)$
whose vertex set is $V(\Gamma) = G$, the set of labelled, directed edges is
$$
E(\Gamma) = \{(g,x,gx):  g \in G, x \in \Sigma\},
$$
and the label alphabet is $\Sigma = X \cup X^{-1}$.
\end{definition}

Let $\Gamma = \Gamma(G:X;R)$ be a Cayley graph. Then $\Gamma$ is often regarded as a rooted graph with basepoint $v_0 = 1_G$ and  is strongly connected:
between any two vertices $u$ and $v$ there is at least one  path from $u$ to $v$.
Note that a word $w \in \Sigma^*$ labels a closed path in $\Gamma(G:X;R)$ if and only if $w$ represents the identity in $G$.
Moreover, $\Gamma$ is symmetric (with respect to the involution $a \mapsto a^{-1}$
on $\Sigma$) and  $|X|$-regular. If $h \in G$ then the map $\mu_h : G \to G$, defined by  $\mu_h(g) = hg$ for all $g \in G$ is a labelled graph automorphism of $\Gamma$.
Thus a Cayley graph is \emph{homogeneous} in the sense that given any two vertices there is a lebelled graph automorphism taking the first vertex
to the second.

\begin{example}
\begin{enumerate}[(a)]
\item In Figure~\ref{CGK} we illustrate the Cayley graph of the Klein $4$-group $\Z/2\Z \times \Z/2\Z$ with respect to the multiplication table presentation
$\langle x,y,z ; x^2=y^2=z^2=1, xy=z=yx, xz=y=zx, yz=x=zy \rangle$.
\begin{figure}[h!]
\begin{center}
\gasset{Nframe=n,Nadjust=w,AHnb=1,AHangle=30,AHLength=1,AHlength=0}
\begin{picture}(0,39.6)(0,-17.3)
\node(1)(0,-2){$1$}
\node(x)(0,17.3){$x$}
\node(y)(20,-17.3){$y$}
\node(z)(-20,-17.3){$z$}
{\footnotesize
\drawedge(1,x){$x$}
\drawedge[ELside=r](1,y){$y$}
\drawedge(1,z){$z$}
\drawedge(x,y){$z$}
\drawedge[ELside=r](y,z){$x$}
\drawedge(z,x){$y$}
}
\end{picture}
\end{center}
\caption{The Cayley graph of the Klein $4$-group $\Z/2\Z \times \Z/2\Z$ with respect
to the multiplication table presentation $\langle x,y,z ; x^2=y^2=z^2=1, xy=z=yx, xz=y=zx, yz=x=zy \rangle$.}\label{CGK}
\end{figure}
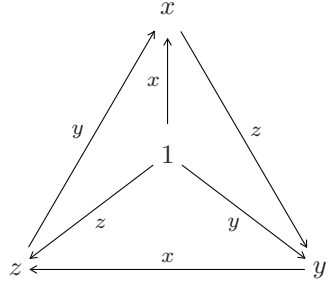

\item The Cayley graph $\Gamma(\Z: x)$ is described in Figure~\ref{CGZ}.
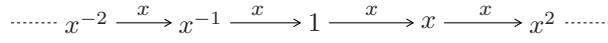
\begin{figure}[h!]
\begin{center}
\gasset{Nframe=n,AHnb=0,Nadjust=w}

\begin{picture}(0,5)(0,0)
\node(1)(0,0){$1$}
\node(x)(15,0){$x$}
\node(xx)(30,0){$x^2$}
\node(x-)(-15,0){$x^{-1}$}
\node(x-x-)(-30,0){$x^{-2}$}
\node(11)(-41,0){}
\node(12)(40,0){}
\drawedge[dash={0.4}0](x-x-,11){}
\drawedge[dash={0.4}0](12,xx){}
{\footnotesize
\gasset{AHnb=1,AHangle=30,AHLength=1,AHlength=0}
\drawedge(x-x-,x-){$x$}
\drawedge(x-,1){$x$}
\drawedge(1,x){$x$}
\drawedge(x,xx){$x$}}
\end{picture}
\end{center}
\caption{The Cayley graph of the group $\mathbb{Z} = \langle x \rangle$}\label{CGZ}
\end{figure}

\item[(b')] The Cayley graph $\Gamma(F_2 : x,y)$ is described in Figure~\ref{CGF2}.
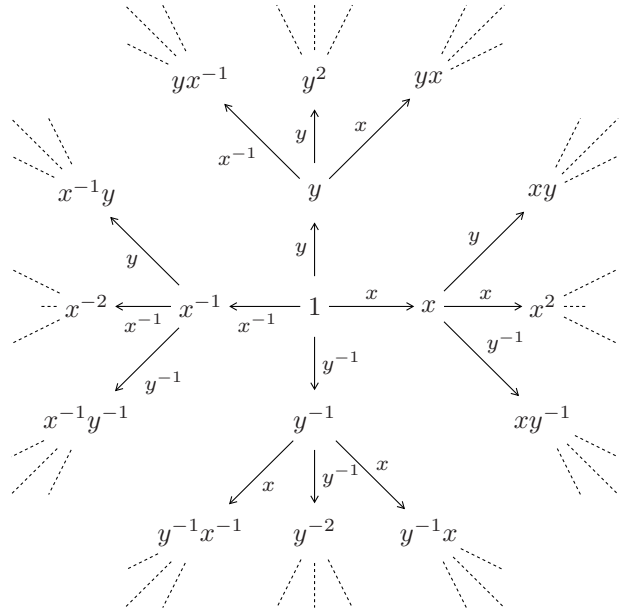
\begin{figure}[h!]
\begin{center}
\gasset{Nframe=n,AHnb=0,Nadjust=w}

\begin{picture}(0,85)(0,-40)
\node(1)(0,0){$1$}
\node(x)(15,0){$x$}
\node(y)(0,15){$y$}
\node(y-)(0,-15){$y^{-1}$}
\node(x-)(-15,0){$x^{-1}$}
\node(xx)(30,0){$x^2$}
\node(xy-)(30,-15){$xy^{-1}$}
\node(xy)(30,15){$xy$}
\node(yx-)(-15,30){$yx^{-1}$}
\node(yy)(0,30){$y^2$}
\node(yx)(15,30){$yx$}
\node(x-y)(-30,15){$x^{-1}y$}
\node(x-x-)(-30,0){$x^{-2}$}
\node(x-y-)(-30,-15){$x^{-1}y^{-1}$}
\node(y-x)(15,-30){$y^{-1}x$}
\node(y-y-)(-0,-30){$y^{-2}$}
\node(y-x-)(-15,-30){$y^{-1}x^{-1}$}

\gasset{Nframe=n,Nadjust=w,Nadjust=h,Nadjustdist=-1,AHnb=0}
\node(yx-1)(-25,35){}
\node(yx-2)(-25,40){}
\node(yx-3)(-20,40){}
\drawedge[dash={0.4}0](yx-,yx-1){}
\drawedge[dash={0.4}0](yx-,yx-2){}
\drawedge[dash={0.4}0](yx-,yx-3){}
\node(yx1)(20,40){}
\node(yx2)(25,40){}
\node(yx3)(25,35){}
\drawedge[dash={0.4}0](yx,yx1){}
\drawedge[dash={0.4}0](yx,yx2){}
\drawedge[dash={0.4}0](yx,yx3){}
\node(yy1)(0,40){}
\node(yy2)(-5,40){}
\node(yy3)(5,40){}
\drawedge[dash={0.4}0](yy,yy1){}
\drawedge[dash={0.4}0](yy,yy2){}
\drawedge[dash={0.4}0](yy,yy3){}
\node(xy-1)(40,-20){}
\node(xy-2)(40,-25){}
\node(xy-3)(35,-25){}
\drawedge[dash={0.4}0](xy-,xy-1){}
\drawedge[dash={0.4}0](xy-,xy-2){}
\drawedge[dash={0.4}0](xy-,xy-3){}
\node(xy1)(40,20){}
\node(xy2)(40,25){}
\node(xy3)(35,25){}
\drawedge[dash={0.4}0](xy,xy1){}
\drawedge[dash={0.4}0](xy,xy2){}
\drawedge[dash={0.4}0](xy,xy3){}
\node(xx1)(40,5){}
\node(xx2)(40,0){}
\node(xx3)(40,-5){}
\drawedge[dash={0.4}0](xx,xx1){}
\drawedge[dash={0.4}0](xx,xx2){}
\drawedge[dash={0.4}0](xx,xx3){}
\node(x-y-1)(-40,-20){}
\node(x-y-2)(-40,-25){}
\node(x-y-3)(-35,-25){}
\drawedge[dash={0.4}0](x-y-,x-y-1){}
\drawedge[dash={0.4}0](x-y-,x-y-2){}
\drawedge[dash={0.4}0](x-y-,x-y-3){}
\node(x-y1)(-40,20){}
\node(x-y2)(-40,25){}
\node(x-y3)(-35,25){}
\drawedge[dash={0.4}0](x-y,x-y1){}
\drawedge[dash={0.4}0](x-y,x-y2){}
\drawedge[dash={0.4}0](x-y,x-y3){}
\node(x-x-1)(-40,5){}
\node(x-x-2)(-40,0){}
\node(x-x-3)(-40,-5){}
\drawedge[dash={0.4}0](x-x-,x-x-1){}
\drawedge[dash={0.4}0](x-x-,x-x-2){}
\drawedge[dash={0.4}0](x-x-,x-x-3){}
\node(y-x1)(25,-35){}
\node(y-x2)(25,-40){}
\node(y-x3)(20,-40){}
\drawedge[dash={0.4}0](y-x,y-x1){}
\drawedge[dash={0.4}0](y-x,y-x2){}
\drawedge[dash={0.4}0](y-x,y-x3){}
\node(y-x-1)(-25,-35){}
\node(y-x-2)(-25,-40){}
\node(y-x-3)(-20,-40){}
\drawedge[dash={0.4}0](y-x-,y-x-1){}
\drawedge[dash={0.4}0](y-x-,y-x-2){}
\drawedge[dash={0.4}0](y-x-,y-x-3){}
\node(y-y-1)(-5,-40){}
\node(y-y-2)(0,-40){}
\node(y-y-3)(5,-40){}
\drawedge[dash={0.4}0](y-y-,y-y-1){}
\drawedge[dash={0.4}0](y-y-,y-y-2){}
\drawedge[dash={0.4}0](y-y-,y-y-3){}

{\footnotesize
\gasset{AHnb=1,AHangle=30,AHLength=1,AHlength=0}
\drawedge(1,x){$x$}
\drawedge(1,y){$y$}
\drawedge(1,y-){$y^{-1}$}
\drawedge(1,x-){$x^{-1}$}
\drawedge(x,xy){$y$}
\drawedge(x,xy-){$y^{-1}$}
\drawedge(x,xx){$x$}
\drawedge(y,yy){$y$}
\drawedge(y,yx){$x$}
\drawedge(y,yx-){$x^{-1}$}
\drawedge(x-,x-y){$y$}
\drawedge(x-,x-y-){$y^{-1}$}
\drawedge(x-,x-x-){$x^{-1}$}
\drawedge(y-,y-x){$x$}
\drawedge(y-,y-y-){$y^{-1}$}
\drawedge(y-,y-x-){$x$}}
\end{picture}
\end{center}
\caption{The Cayley graph of the free group $F_2 = \langle x, y \rangle$}\label{CGF2}
\end{figure}

\item The Cayley graph $\Gamma(\Z^2: x,y ; [x,y])$ is described in Figure~\ref{CGZ2}.
\begin{figure}[h!]
\begin{center}
\gasset{Nframe=n,AHnb=0,Nadjust=w}

\begin{picture}(0,95)(0,-45)
\node(1)(0,0){$1$}
\node(x)(15,0){$x$}
\node(xx)(30,0){$x^2$}
\node(y)(0,15){$y$}
\node(yy)(0,30){$y^2$}
\node(x-)(-15,0){$x^{-1}$}
\node(x-x-)(-30,0){$x^{-2}$}
\node(y-)(0,-15){$y^{-1}$}
\node(y-y-)(0,-30){$y^{-2}$}
\node(x-y)(-15,15){$x^{-1}y$}
\node(xy)(15,15){$xy$}
\node(x-y-)(-15,-15){$x^{-1}y^{-1}$}
\node(xy-)(15,-15){$xy^{-1}$}

{\footnotesize
\gasset{AHnb=1,AHangle=30,AHLength=1,AHlength=0}
\drawedge(x-x-,x-){$x$}
\drawedge(x-,1){$x$}
\drawedge(1,x){$x$}
\drawedge(x,xx){$x$}

\drawedge(y,yy){$y$}
\drawedge(1,y){$y$}
\drawedge(y-,1){$y$}
\drawedge(y-y-,y-){$y$}
\drawedge(x-y,y){$x$}
\drawedge(y,xy){$x$}
\drawedge(x-y-,y-){$x$}
\drawedge(y-,xy-){$x$}
\drawedge(x-,x-y){$y$}
\drawedge(x-y-,x-){$y$}
\drawedge(x,xy){$y$}
\drawedge(xy-,x){$y$}
}

\node(3)(-15,30){}
\node(4)(0,45){}
\node(5)(15,30){}
\node(9)(-30,15){}
\node(10)(30,15){}
\node(11)(-45,0){}
\node(12)(45,0){}
\node(13)(-30,-15){}
\node(14)(30,-15){}
\node(18)(-15,-30){}
\node(19)(0,-45){}
\node(20)(15,-30){}
\drawedge[dash={0.4}0](3,x-y){}
\drawedge[dash={0.4}0](4,yy){}
\drawedge[dash={0.4}0](5,xy){}
\drawedge[dash={0.4}0](9,x-y){}
\drawedge[dash={0.4}0](10,xy){}
\drawedge[dash={0.4}0](11,x-x-){}
\drawedge[dash={0.4}0](12,xx){}
\drawedge[dash={0.4}0](13,x-y-){}
\drawedge[dash={0.4}0](14,xy-){}
\drawedge[dash={0.4}0](18,x-y-){}
\drawedge[dash={0.4}0](19,y-y-){}
\drawedge[dash={0.4}0](20,xy-){}

\drawedge[dash={0.4}0](3,yy){}
\drawedge[dash={0.4}0](5,yy){}
\drawedge[dash={0.4}0](10,xx){}
\drawedge[dash={0.4}0](18,y-y-){}
\drawedge[dash={0.4}0](20,y-y-){}
\drawedge[dash={0.4}0](9,x-x-){}
\drawedge[dash={0.4}0](13,x-x-){}
\end{picture}
\end{center}
\caption{The Cayley graph of the group $\mathbb{Z}^2 = \langle x, y ; xy = yx \rangle$}\label{CGZ2}
\end{figure}

\item The Cayley graph $\Gamma(G: x,y,z,w; x^2,y^2,z^2,w^2)$, where $G =  (\Z/2\Z)*(\Z/2\Z)*(\Z/2\Z)*(\Z/2\Z)$, is described in Figure~\ref{CGC2*C2*C2*C2}.
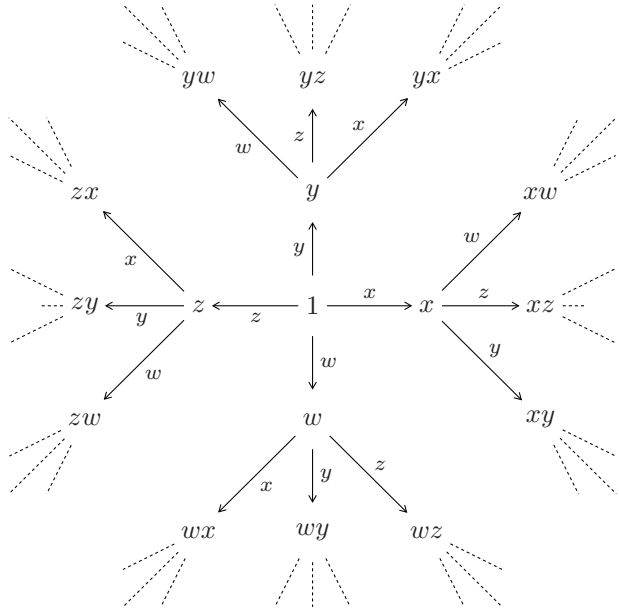
\begin{figure}[h!]
\begin{center}
\gasset{Nframe=n,AHnb=0,Nadjust=w}
\begin{picture}(0,85)(0,-40)
\node(1)(0,0){$1$}
\node(x)(15,0){$x$}
\node(y)(0,15){$y$}
\node(w)(0,-15){$w$}
\node(z)(-15,0){$z$}
\node(xz)(30,0){$xz$}
\node(xy)(30,-15){$xy$}
\node(xw)(30,15){$xw$}
\node(yw)(-15,30){$yw$}
\node(yz)(0,30){$yz$}
\node(yx)(15,30){$yx$}
\node(zx)(-30,15){$zx$}
\node(zy)(-30,0){$zy$}
\node(zw)(-30,-15){$zw$}
\node(wz)(15,-30){$wz$}
\node(wy)(-0,-30){$wy$}
\node(wx)(-15,-30){$wx$}
{\footnotesize
\gasset{AHnb=1,AHangle=30,AHLength=1,AHlength=0}

\drawedge(1,x){$x$}
\drawedge(1,y){$y$}
\drawedge(1,w){$w$}
\drawedge(1,z){$z$}

\drawedge(x,xy){$y$}
\drawedge(x,xw){$w$}
\drawedge(x,xz){$z$}

\drawedge(y,yw){$w$}
\drawedge(y,yx){$x$}
\drawedge(y,yz){$z$}

\drawedge(z,zy){$y$}
\drawedge(z,zw){$w$}
\drawedge(z,zx){$x$}

\drawedge(w,wx){$x$}
\drawedge(w,wy){$y$}
\drawedge(w,wz){$z$}
}
\gasset{Nframe=n,Nadjust=w,Nadjust=h,Nadjustdist=-1,AHnb=0}

\node(yw1)(-25,35){}
\node(yw2)(-25,40){}
\node(yw3)(-20,40){}
\drawedge[dash={0.4}0](yw,yw1){}
\drawedge[dash={0.4}0](yw,yw2){}
\drawedge[dash={0.4}0](yw,yw3){}

\node(yx1)(20,40){}
\node(yx2)(25,40){}
\node(yx3)(25,35){}
\drawedge[dash={0.4}0](yx,yx1){}
\drawedge[dash={0.4}0](yx,yx2){}
\drawedge[dash={0.4}0](yx,yx3){}

\node(yz1)(-5,40){}
\node(yz2)(0,40){}
\node(yz3)(5,40){}
\drawedge[dash={0.4}0](yz,yz1){}
\drawedge[dash={0.4}0](yz,yz2){}
\drawedge[dash={0.4}0](yz,yz3){}

\node(xy1)(40,-20){}
\node(xy2)(40,-25){}
\node(xy3)(35,-25){}
\drawedge[dash={0.4}0](xy,xy1){}
\drawedge[dash={0.4}0](xy,xy2){}
\drawedge[dash={0.4}0](xy,xy3){}

\node(xw1)(40,20){}
\node(xw2)(40,25){}
\node(xw3)(35,25){}
\drawedge[dash={0.4}0](xw,xw1){}
\drawedge[dash={0.4}0](xw,xw2){}
\drawedge[dash={0.4}0](xw,xw3){}

\node(xz1)(40,5){}
\node(xz2)(40,0){}
\node(xz3)(40,-5){}
\drawedge[dash={0.4}0](xz,xz1){}
\drawedge[dash={0.4}0](xz,xz2){}
\drawedge[dash={0.4}0](xz,xz3){}

\node(zw1)(-40,-20){}
\node(zw2)(-40,-25){}
\node(zw3)(-35,-25){}
\drawedge[dash={0.4}0](zw,zw1){}
\drawedge[dash={0.4}0](zw,zw2){}
\drawedge[dash={0.4}0](zw,zw3){}

\node(zx1)(-40,20){}
\node(zx2)(-40,25){}
\node(zx3)(-35,25){}
\drawedge[dash={0.4}0](zx,zx1){}
\drawedge[dash={0.4}0](zx,zx2){}
\drawedge[dash={0.4}0](zx,zx3){}

\node(zy1)(-40,5){}
\node(zy2)(-40,0){}
\node(zy3)(-40,-5){}
\drawedge[dash={0.4}0](zy,zy1){}
\drawedge[dash={0.4}0](zy,zy2){}
\drawedge[dash={0.4}0](zy,zy3){}

\node(wz1)(25,-35){}
\node(wz2)(25,-40){}
\node(wz3)(20,-40){}
\drawedge[dash={0.4}0](wz,wz1){}
\drawedge[dash={0.4}0](wz,wz2){}
\drawedge[dash={0.4}0](wz,wz3){}

\node(wx1)(-25,-35){}
\node(wx2)(-25,-40){}
\node(wx3)(-20,-40){}
\drawedge[dash={0.4}0](wx,wx1){}
\drawedge[dash={0.4}0](wx,wx2){}
\drawedge[dash={0.4}0](wx,wx3){}

\node(wy1)(-5,-40){}
\node(wy2)(0,-40){}
\node(wy3)(5,-40){}
\drawedge[dash={0.4}0](wy,wy1){}
\drawedge[dash={0.4}0](wy,wy2){}
\drawedge[dash={0.4}0](wy,wy3){}

\end{picture}
\end{center}
\caption{The Cayley graph of the group $(\mathbb{Z}/2\mathbb{Z}) * (\mathbb{Z}/2\mathbb{Z}) * (\mathbb{Z}/2\mathbb{Z}) * (\mathbb{Z}/2\mathbb{Z}) = \langle x, y, z, w ; x^2 = y^2 = z^2 = w^2 = 1 \rangle$}\label{CGC2*C2*C2*C2}
\end{figure}
\end{enumerate}

Note that the Cayley graphs in (c) and (d) are $4$-regular directed trees and they are isomorphic as directed graphs. However they are not isomorphic as directed labelled graphs.
\end{example}

Let $G = \langle X;R \rangle$ be a finitely generated presentation and let
$\Gamma = \Gamma(G:X;R)$ be the corresponding Cayley graph. When equipped with the metric
$\dist \colon V \times V \to [0,\infty)$
defined by $\dist(u,v) = \min\{|\pi(u,v)|: \pi \in \Pi_{u,v}\}$,  $\Gamma$  is a discrete metric
space. Denote by $B_n = \{g \in G: \dist(g,1_G) \leq n\}$ the \emph{ball} of radius
$n$ centered at $1_G$. The map $\gamma = \gamma(G:X;R) \colon \N \to \N$ defined by $\gamma(n) = |B_n|$ for all $n \in \N$ is called the \emph{growth function} of $G$ with respect to the given presentation. Since $\gamma$ is subadditive (i.e.
$\gamma(n+m) \leq \gamma(n)\gamma(m)$ for all $n,m \in \N$), by a well known result of
Fekete the limit
$$\lambda = \lambda(G:X;R) = \lim_{n \to \infty} \sqrt{\gamma(n)},$$
exists and $1 \leq \lambda < \infty$.
This limit is called the \emph{growth rate} of $G$ with respect to the given presentation,

That  $\lambda = 1$ is a condition independent of the particular presentation.
If $G = \langle X';R' \rangle$ is another finitely generated presentation of $G$ and $\gamma'$ is
the corresponding growth function, then
$\lambda' = \lim_{n \to \infty} \sqrt{\gamma'(n)}$ equals $1$ if and only if $\lambda$ does. If $\lambda = 1$ one says that
the group $G$ has \emph{subexponential growth}.
Otherwise, the group $G$ is said to have \emph{exponential growth}.
All finite groups, all finitely generated abelian groups, and, more generally, all nilpotent groups have subexponential growth.
On the other hand, if $F_X = \langle X \rangle$ is a finitely generated free group, then $\lambda = 2|X|-1$ so that
$F_X$ has exponential growth if $|X| \geq 2$.

\subsection{The Word Problem}

In a remarkable paper in 1911, twenty years before the development of the theory of computability,
 Dehn~\cite{dehn11} posed  three  fundamental decision problems in group theory:
the \emph{Word Problem}, the \emph{Conjugacy Problem}, and the \emph{Isomorphism Problem}.
 (See also the expository article by  de la Harpe~\cite{delaharpe}.)
Dehn viewed the Word Problem as the following algorithmic problem: given a finitely generated group presentation
$G = \langle X; R \rangle$ find an algorithm which, when given a word $w \in \Sigma^*$,
decides, in a finite number of steps, whether or not $w$ represents the identity element of $G$.
In 1912 Dehn~\cite{dehn12} solved this problem for the \emph{fundamental group of a closed orientable surface}:
$$
G_h = \langle a_1, b_1, a_2, b_2, \ldots, a_h, b_h; \prod_{i=1}^h [a_i,b_i]\rangle
$$
where $h\geq 2$ is the genus of the surface.

Given  a finite group presentation $G = \langle X; r_1, \ldots,r_k \rangle$,
let $R$ be the \emph{symmetrized} set generated by the given relators, that is, $R$ consists of all
cyclic permutations of the $r_i$ and their inverses.  Then  $ \langle X; R \rangle$ is also a presentation of $G$.
The original presentation is  a \emph{Dehn presentation} if every nontrivial word $w$ equal to the identity in $G$ contains a
subword $u$ such that some $r \in R$ has the form $r = uv$ where $|u| > |v|$.  This says that every nontrivial word
equal to the identity contains more than half of a cyclic permutation  of the given relators or their  inverses.

Although we usually do not write the trivial relators,  if $X$ is a finite set and $F_X$ is the free group based on $X$, then
a Dehn presentation of $F_X$ is given by $\langle X; xx^{-1}, x^{-1}x, x \in X\rangle$.

Now, every group admitting a Dehn presentation has solvable Word Problem. Indeed, if $G = \langle X; r_1,\ldots,r_k \rangle$
is a Dehn presentation,  let $R$ be the symmetrized set of relators generated by the $r_i$.
We  then have the following algorithm, now called \emph{Dehn's algorithm},  to decide whether or not   $w \in \Sigma^*$ represents
the identity element  in $G$:
\begin{enumerate}[\it Step 1)]
\item if $w = \varepsilon$ then $w$ does represent $1_G$, otherwise go to the next step;
\item if $w$ contains a subword $u$  where   for some $r \in R,  r = uv$ with $|u| > |v|$,
 then replace $u$ by $v^{-1}$  and go to Step 1.  Otherwise, $w$ does not represent $1_G$.
\end{enumerate}
 Note that since each step in the algorithm strictly reduces the length of the word being considered,
Dehn's algorithm takes only linearly many steps and thus works in linear time, which  is  the best possible complexity result.
The Cayley graph of the surface group $G_h, h \ge 2$ is the dual graph of  the regular tessellation of the
hyperbolic plane by $4h$-gons.   Dehn used hyperbolic geometry to show that the presentation of $G_h$ given above
is  a Dehn presentation  and thus $G$ has solvable Word Problem.  The quest to extend Dehn's algorithm to a larger class of groups led to the development of \emph{small cancellation theory} which, among many other things, gives  some simple sufficient conditions for a presentation to be a Dehn presentation.  (See~\cite{schupp1} for a survey.) This then
led to Gromov's~\cite{gromov} remarkable development of the theory of \emph{word-hyperbolic} groups.
As mentioned before,  the Cayley graph of a finitely generated group becomes  a metric space by
defining the distance between two vertices as the minimal length of a path
connecting them and considering each edge as isometric to the unit interval.  The \emph{thin triangle condition}
then captures many of the features of hyperbolic geometry.
One of the characterizations of a group $G$ being word-hyperbolic is exactly that it has some Dehn presentation.
 (See~\cite{gromov} and  also~\cite[Chapter III.$\Gamma$, Theorem 2.6.]{BH}.)

Solvability of the Word Problem was extended to all \emph{one-relator groups} by  Magnus~\cite{magnus} in 1932.
  We do not, however,  know any  bound on the complexity of solving the Word Problem  over the class of all one-relator groups.
A  theorem of  Newman~\cite{newman, LS} shows that any  one-relator presentation
 of the form $G = \langle X; w^n \rangle$ with  $n \ge 2$ is a Dehn presentation.

 It was independently shown by Novikov~\cite{novikov} in 1955 and by Boone~\cite{boone} in 1958 that there exist
finitely presented groups $G = \langle X; R \rangle$ with unsolvable  Word Problem.  In order to prove this basic result it is necessary to code the Halting Problem for Turing machines into the Word Problem of the group.
The unsolvability of the Word Problem is the foundation of all the unsolvability results in group theory and topology.

\subsection{The Dehn function}
Let
\begin{equation}
\label{e:dehn-function-pres}
G = \langle X;R\rangle
\end{equation}
be a finite presentation of $G$.
Let $\Sigma = X \cup X^{-1}$ denote the associated group alphabet and suppose that
$w \in \Sigma^*$ satisfies $w \approx \varepsilon$, that is, $w = 1_G$ in $G$.
This is equivalent to saying that the reduced form of $w$ belongs to the normal closure
$N$ of $R$ in $F_X$, the free group based on $X$.  This in turn is equivalent to the existence of  an  expression
\begin{equation}
\label{e:dehn-area}
w = u_1 r_1 u_1^{-1} u_2 r_2 u_2^{-1} \cdots u_m r_m u_m^{-1}
\end{equation}
where $m \in \N$, $u_i \in \Sigma^*$ and $r_i \in R^{\pm 1}$, $i=1,2,\ldots,m$.
Then  the \emph{area} of $w$ (with respect to the given presentation \eqref{e:dehn-function-pres}), denoted $\Area(w)$,
is the smallest $m \geq 0$ such that an  expression of the form above  holds.
The \emph{Dehn function} associated with the presentation \eqref{e:dehn-function-pres}
is the map $\Dehn \colon \N \to \N$ defined by
$$
\Dehn(n) = \max\{\Area(w): w \in \Sigma^*, w \approx \varepsilon, |w| \leq n\}.
$$

Cannon observed the following (see also \cite[Theorem 2.1]{gersten}).

\begin{theorem}
A finitely presented group presentation $G = \langle X ; R \rangle$ has a computable  Dehn function  if and only if the group $G$ has solvable Word Problem.
\end{theorem}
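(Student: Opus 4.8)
The plan is to prove both implications directly from the definitions, using the fact that ``algorithm'' means Turing machine (the Church--Turing Thesis) together with the $\varepsilon$-search technique already exploited in the proof that a set is computable iff it and its complement are computably enumerable.

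First I would prove that if $\Dehn$ is computable then the Word Problem is solvable. Given an input word $w \in \Sigma^*$ with $|w| = n$, the algorithm computes $\Dehn(n)$ and then enumerates \emph{all} expressions of the form $u_1 r_1 u_1^{-1} \cdots u_m r_m u_m^{-1}$ with $m \le \Dehn(n)$, each $r_i \in R^{\pm 1}$, and each $u_i$ ranging over $\Sigma^*$. A priori the $u_i$ range over an infinite set, so I would first observe (a standard free-group normalization) that one may freely reduce the word $u_1 r_1 u_1^{-1} \cdots u_m r_m u_m^{-1}$ in $F_X$ and that an expression of minimal $m$ can be taken with each $\lvert u_i\rvert$ bounded by a computable function of $n$, $m$, and $\max_{r \in R}\lvert r\rvert$; alternatively, and more cleanly, one bounds the length of the freely reduced product throughout the cancellation process, so that it suffices to search among finitely many candidate expressions. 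For each candidate we freely reduce the product in $F_X$ and check whether it equals the free reduction of $w$; this is a purely finite computation. If some candidate matches, then $w \approx \varepsilon$; if none matches among the finitely many expressions with $m \le \Dehn(n)$, then by the definition of $\Area$ and $\Dehn$ no expression of \eqref{e:dehn-area} exists for $w$, hence $w \not\approx \varepsilon$. Thus the Word Problem is solvable.

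Conversely, I would show that if the Word Problem is solvable then $\Dehn$ is computable. The Word Problem being solvable means the language $\WP = \{w \in \Sigma^* : w \approx \varepsilon\}$ is a computable set, so both $\WP$ and its complement are computably enumerable. To compute $\Dehn(n)$, first enumerate the finitely many words $w$ with $\lvert w\rvert \le n$ and, using solvability of the Word Problem, discard those with $w \not\approx \varepsilon$; let $w^{(1)}, \dots, w^{(N)}$ be the survivors. For each $w^{(j)}$ I need to compute $\Area(w^{(j)})$. Run a dovetailed search over all $m \ge 0$ and all tuples $(u_1, r_1, \dots, u_m, r_m)$ with $r_i \in R^{\pm 1}$ and $u_i \in \Sigma^*$, checking after each candidate whether its free reduction equals the free reduction of $w^{(j)}$; since $w^{(j)} \approx \varepsilon$, such a tuple exists, so this search terminates and returns the minimal $m$, namely $\Area(w^{(j)})$. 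Finally $\Dehn(n) = \max_j \Area(w^{(j)})$ (taking the max to be $0$ if $N = 0$). This is a terminating computation for every $n$, so $\Dehn$ is computable.

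The only subtle point --- and the step I expect to require the most care --- is the first direction: knowing $\Dehn(n)$ bounds the \emph{number} of conjugated relators but not a priori the lengths of the conjugating words $u_i$, so one must justify that the search over the $u_i$ can be made finite. The clean way to do this is to track the length of the freely reduced partial products $u_1 r_1 u_1^{-1} \cdots u_i r_i u_i^{-1}$ and observe that any minimal expression can be rewritten (by deleting ``backtracking'' portions of the $u_i$ that cancel completely without affecting the rest) into one in which each intermediate freely reduced word, and hence each $u_i$, has length bounded by $n + \Dehn(n)\cdot(2\ell+1)$ where $\ell = \max_{r\in R}\lvert r\rvert$; with this bound the search space is finite and the procedure halts. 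Everything else is routine dovetailing and free reduction, both of which are computable by the Church--Turing Thesis.
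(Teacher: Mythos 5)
Your overall strategy coincides with the paper's, to the extent the paper gives one: the paper only sketches the forward direction (computable Dehn function implies solvable Word Problem), observing that the conjugating elements $u_i$ in an expression of the form \eqref{e:dehn-area} can be length-bounded so that, once $\Dehn(|w|)$ is computed, only finitely many candidate products need to be tested against $w$ in the free group. Your treatment of that direction is the same idea, and your justification of the length bound (tracking the freely reduced partial products and deleting backtracking) is more careful than the paper's one-line assertion that $|u_i| \le |w|$ suffices. The converse direction is not proved in the paper at all, so that half is entirely your own.

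There is, however, one step in your converse that fails as written. To compute $\Area(w^{(j)})$ you run a dovetailed search over all $m \ge 0$ and all tuples $(u_1, r_1, \dots, u_m, r_m)$ with $u_i \in \Sigma^*$ and assert that the search ``terminates and returns the minimal $m$''. A dovetailed search over this infinite space will indeed find \emph{some} expression for $w^{(j)}$ (certifying $w^{(j)} \approx \varepsilon$ and giving an upper bound on the area), but the first expression encountered need not have minimal $m$, and you cannot certify minimality by exhausting the tuples with a fixed $m' < m$, since for each $m'$ there are infinitely many choices of the $u_i$. The repair is exactly the normalization lemma you isolate in your final paragraph: a minimal expression with $m'$ conjugated relators can be rewritten so that each $|u_i|$ is bounded by a computable function of $n$, $m'$, and $\max_{r \in R}|r|$ --- note that for this direction the bound must be stated in terms of $m'$ rather than $\Dehn(n)$, to avoid circularity. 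With that bound in hand, for each $m' = 0, 1, 2, \dots$ in turn you search a finite set of candidates and return the first $m'$ that succeeds; this genuinely computes $\Area(w^{(j)})$ and hence $\Dehn(n)$. Once that substitution is made, both directions of your argument are sound.
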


   It is not difficult to show that if $w = 1_G$ in $G$ then in an expression \eqref{e:dehn-area}
 the length of all the conjugating elements $u_i$ can be  bounded by $|w|$.  Thus if we can calculate $\Dehn(|w|) = b$ we
can try all possible products of the form \eqref{e:dehn-area} with $m \le b$ and all $|u_i| \le |w|$ and
check whether any of these products equals $w$ in the free group.

\subsection{The Word Problem as a formal language}

Anisimov~\cite{anisimov} in 1972 introduced the fruitful idea of viewing the Word Problem as a formal language,
a point of view which we now adopt.

\begin{definition}
Let $G = \langle X;R\rangle$ be a finitely generated group presentation.
The \emph{Word Problem} of $G$, relative to the given presentation, is the
language
$$
\WP(G:X;R) = \{w \in \Sigma^*: w \approx \varepsilon\},
$$
where  $\Sigma$ is the group alphabet as usual.
One says that the $G$ has \emph{regular} (resp. \emph{context-free}, resp. \emph{computable}) Word Problem with
respect to the given presentation if  $\WP(G:X;R)$ is a regular (resp. context-free, resp. computable) language.
\end{definition}

Note that, the Word Problem for a finitely generated group presentation $G = \langle X; R \rangle$ is solvable (in the sense of Dehn)
if and only if the language $\WP(G:X;R) \subset \Sigma^*$ is computable.

\begin{observation}[Invariance and finitely generated subgroups]
\label{r:invariance}
\rm{It is easy to see that the classification above of the Word Problem as a formal language is  actually a property of the group
and does not depend on the particular presentation considered. Indeed, the complexity of the Word Problem of a finitely generated
group bounds the complexity of the Word Problems of all its finitely generated subgroups.
For, suppose that $G = \langle X;R \rangle$ has a Word Problem of a given type and that
$H = \langle Y; S \rangle$ is a finitely generated presentation of a group isomorphic to a finitely generated subgroup  of $G$.
Let $\phi : H \to G$ be an injective homomorphism  and for $y \in Y$ denote by $w_{y} \in \Sigma^*$ a representative of the image  $\phi(y)$.
So, whether a finite automaton, pushdown automaton or Turing machine $\mathfrak{M}$ accepts the Word Problem for the first presentation,  we can
construct a machine $\mathfrak{M}'$ of the same type which, on reading a letter  $(y)^{\pm 1} \in Y \cup Y^{-1}$ simulates the sequence of
transitions of $\mathfrak{M}$ on reading the word $(w_{y})^{\pm 1} \in \Sigma^*$.}

As a consequence, we say that a finitely generated group $G$ is  \emph{context-free} provided
that the Word Problem $\WP(G:X;R)$ relative to some (equivalently, every) finitely generated presentation $G = \langle X; R \rangle$ is context-free.
\end{observation}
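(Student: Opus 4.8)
The plan is to derive everything from one substitution argument together with the closure of the three relevant language classes under inverse homomorphism. First I would note that the invariance statement is the special case of the subgroup statement in which the subgroup is all of $G$: if $G = \langle X; R\rangle$ and $G = \langle X'; R'\rangle$ are two finitely generated presentations of the same group, applying the subgroup statement to the identity isomorphism once in each direction shows that $\WP(G:X;R)$ and $\WP(G:X';R')$ lie in exactly the same one of the classes $\{$regular, context-free, computable$\}$. So it suffices to prove the subgroup statement, and as a byproduct the notion of a \emph{context-free group} becomes well defined.

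For the subgroup statement, let $\phi\colon H \to G$ be an injective homomorphism, put $\Sigma = X \cup X^{-1}$ and $\Delta = Y \cup Y^{-1}$. For each $y \in Y$ I would fix a word $w_y \in \Sigma^*$ representing $\phi(y)$, and set $w_{y^{-1}} := (w_y)^{-1}$, the formal inverse word. Since $Y$ is finite, the assignment $y^{\pm 1}\mapsto w_{y^{\pm 1}}$ extends, by the universal property of the free monoid $\Delta^*$, to a unique monoid homomorphism $\Psi\colon \Delta^*\to\Sigma^*$. The point of choosing $w_{y^{-1}}$ to be the formal inverse of $w_y$ is that then $\Psi(v)$ represents $\phi(\bar v)\in G$ for every $v\in\Delta^*$, where $\bar v\in H$ denotes the element represented by $v$. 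Using the injectivity of $\phi$ one gets
\[
v \in \WP(H:Y;S) \iff \bar v = 1_H \iff \phi(\bar v) = 1_G \iff \Psi(v) \in \WP(G:X;R),
\]
so that $\WP(H:Y;S) = \Psi^{-1}\bigl(\WP(G:X;R)\bigr)$.

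It then remains to observe that each of the three classes is preserved under preimage along the monoid homomorphism $\Psi$. If $\WP(G:X;R)$ is computable, then on input $v$ one computes $\Psi(v)$ and runs the decider, so $\WP(H:Y;S)$ is computable. If $\WP(G:X;R)$ is accepted by a deterministic finite automaton (Theorem~\ref{t:characterization-regular}), one builds an automaton over $\Delta$ that on reading $y^{\pm 1}$ performs the block of transitions carried out on reading $w_{y^{\pm 1}}$ (the empty block when $w_{y^{\pm 1}} = \varepsilon$), with the same initial and final states; this automaton accepts $\Psi^{-1}(\WP(G:X;R)) = \WP(H:Y;S)$, which is therefore regular. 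If $\WP(G:X;R)$ is context-free it is accepted by a pushdown automaton (Theorem~\ref{t:characterization--free}), and the same block simulation, now carrying the stack contents across each block, yields a pushdown automaton for $\WP(H:Y;S)$, which is therefore context-free.

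There is no deep obstacle here; the two points that must be got right are (i) that $\Psi$ genuinely realizes $\phi$ at the level of words, which is exactly why $w_{y^{-1}}$ must be taken to be the formal inverse of $w_y$ rather than some arbitrary representative of $\phi(y)^{-1}$, and (ii) that the membership equivalence above uses injectivity of $\phi$ in an essential way --- for a non-injective homomorphism the implication $\phi(\bar v) = 1_G \Rightarrow \bar v = 1_H$ fails, which is precisely why the statement concerns subgroups and not quotients. The degenerate case $w_{y^{\pm1}} = \varepsilon$ is absorbed harmlessly by the ``empty block'' convention in all three machine models.
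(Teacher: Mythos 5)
Your argument is correct and is essentially the paper's own: the paper's justification is exactly the block-simulation construction (a machine $\mathfrak{M}'$ that on reading $y^{\pm 1}$ replays the transitions of $\mathfrak{M}$ on $w_{y^{\pm 1}}$), which you have merely repackaged as closure of the three language classes under inverse monoid homomorphism, together with the identity $\WP(H:Y;S) = \Psi^{-1}(\WP(G:X;R))$. Your explicit attention to taking $w_{y^{-1}}$ to be the formal inverse of $w_y$ and to where injectivity of $\phi$ is used makes the sketch slightly more careful than the paper's, but the underlying idea is the same.
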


Anisimov~\cite{anisimov} characterized groups with regular Word Problem.

\begin{theorem}[Anisimov]
\label{t:anisimov}
Let $G = \langle X;R \rangle$ be a finitely generated group.
Then $G$ has regular Word Problem  if and only if $G$ is finite.
\end{theorem}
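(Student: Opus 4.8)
$G = \langle X; R\rangle$ has regular Word Problem if and only if $G$ is finite.

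The plan is to prove the two implications separately, using the equivalence between regular languages and deterministic finite automata (Theorem~\ref{t:characterization-regular}) together with the presentation-independence recorded in Observation~\ref{r:invariance} (so that ``$G$ has regular Word Problem'' is unambiguous).

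For the direction from finiteness to regularity I would simply turn the multiplication of $G$ into an automaton. Take $\A = (Q,\Sigma,\delta,q_0,F)$ with $Q = G$, initial state $q_0 = 1_G$, final states $F = \{1_G\}$, and transition $\delta(g,a) = ga$ for $g \in G$ and $a \in \Sigma$; this is well defined since each letter of $\Sigma = X \cup X^{-1}$ names an element of $G$. A one-line induction on $|w|$ shows that after reading $w = a_1 a_2 \cdots a_n$ the unique run of $\A$ is in the state $a_1 a_2 \cdots a_n \in G$, so $\A$ accepts $w$ exactly when $w \approx \varepsilon$. Hence $L(\A) = \WP(G:X;R)$ is regular.

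For the converse I would argue that regularity of $\WP(G:X;R)$ forces $G$ to be finite by a Myhill--Nerode-type counting. Assume $\WP(G:X;R)$ is regular; by Theorem~\ref{t:characterization-regular}, after completing with a dead state if needed, fix a deterministic finite automaton $\A = (Q,\Sigma,\delta,q_0,F)$ with $L(\A) = \WP(G:X;R)$ and with $\delta(q,a)$ a single state for all $q,a$, and write $\delta^*(q,u) \in Q$ for the state reached from $q$ after reading $u \in \Sigma^*$. For each $g \in G$ choose a word $w_g \in \Sigma^*$ representing $g$ and define $\Phi \colon G \to Q$ by $\Phi(g) = \delta^*(q_0, w_g)$. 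The crucial claim is that $\Phi$ is injective. Suppose $\Phi(g) = \Phi(h)$; then for every $t \in \Sigma^*$ one has $\delta^*(q_0, w_g t) = \delta^*(\Phi(g), t) = \delta^*(\Phi(h), t) = \delta^*(q_0, w_h t)$, so $w_g t$ and $w_h t$ lie simultaneously in or out of $\WP(G:X;R)$. I would then apply this with the distinguishing suffix $t = w_g^{-1}$: since $w_g w_g^{-1} \approx \varepsilon$ we have $w_g w_g^{-1} \in \WP(G:X;R)$, hence also $w_h w_g^{-1} \in \WP(G:X;R)$, i.e.\ $w_h w_g^{-1} = 1_G$ in $G$, and therefore $h g^{-1} = 1_G$, that is $g = h$. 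Injectivity of $\Phi$ gives $|G| \le |Q| < \infty$.

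I do not expect a real obstacle: the argument is elementary once one sees that the right move is to test Myhill--Nerode equivalence against the inverse word $w_g^{-1}$, which converts ``same automaton state'' into ``equal in $G$''. (A pumping-lemma proof works equally well: taking $w_g$ to be a geodesic representative of an element $g$ of sufficiently large word length, pumping $w_g w_g^{-1} \in \WP(G:X;R)$ inside the first block yields a strictly shorter word still representing $g$, contradicting minimality; hence $G$ has bounded word length and is finite. I would present the automaton-counting version as the main proof since it is shorter and avoids geodesics.)
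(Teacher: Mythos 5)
Your proof is correct. The forward direction is identical to the paper's: both build the deterministic automaton whose state set is $G$ itself (i.e.\ the Cayley graph with $1_G$ as initial and unique final state). For the converse the two arguments share the same key trick --- appending the formal inverse $u^{-1}$ of a word $u$ converts ``reaches the same automaton state'' into ``represents the same group element,'' since $vu^{-1}\in\WP(G:X;R)$ iff $v=u$ in $G$ --- but they package it differently. The paper argues by contraposition: assuming $G$ infinite, it takes an arbitrarily long word $w$ none of whose nontrivial subwords equals $1_G$ (implicitly, a geodesic representative of a far-away element), applies the pigeonhole principle to find a factorization $w=w_1w_2w_3$ with the automaton in the same state after $w_1$ and after $w_1w_2$, and then distinguishes these by the suffix $w_1^{-1}$. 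You instead show directly that the map $g\mapsto\delta^*(q_0,w_g)$ is injective, distinguishing states by the suffix $w_g^{-1}$, so $|G|\le|Q|$. Your Myhill--Nerode packaging is arguably cleaner: it avoids having to justify the existence of long words with no vanishing nontrivial subword (a small geodesic argument the paper leaves implicit), and it yields the quantitative bound $|G|\le|Q|$ for free. You were also right to complete the partial deterministic automaton with a dead state before using $\delta^*$, a point the paper glosses over. The paper's version, on the other hand, is the one that generalizes to the pumping-lemma style arguments used later (e.g.\ for showing $\Z^2$ is not context-free), which is presumably why it is presented that way.
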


\begin{proof} Suppose that $G$ is finite. Consider the deterministic finite automaton $\A = (Q, \Sigma, \delta, q_0, F)$ where $Q = G$, $1_G$ is both the initial state $q_0$ and the unique element in $F$, and $\delta \colon Q \times \Sigma \to {\mathcal P}(Q)$ is given
by
$$
\delta(q,a) = qa
$$
for all $q \in Q$ and $a \in A$. Note that the graph underlying $\A$ (cf. Example \ref{e:underlying}) is the Cayley graph $\Gamma(G;X;R)$.
Then $\A$ accepts exactly the Word Problem of $G$.

Conversely, if $G$ is infinite there are arbitrarily long words $w \in \Sigma^*$
such that no nontrivial subword of $w$ is equal to the identity in $G$.
Suppose that $\A = (Q, \Sigma, \delta, q_0, F)$ is a deterministic finite automaton with alphabet the group alphabet $\Sigma$ and let $n$ denote the cardinality of its state set $Q$. Taking a word $w$ as above and
such that  $|w| \geq n+1$, then there exist $q \in Q$ and words $w_1, w_2, w_3 \in \Sigma^*$ satisfying
$w = w_1w_2w_3$ with $w_2 \neq \varepsilon$  such that $\A$, when reading $w$, is in the same state $q$
after reading the initial segment $w_1$ and after reading $w_1w_2$.  Then if $\A$ is in the state $q' \in Q$ after reading $w_1w_{1}^{-1}$  it is in  the same state after reading $w_1w_2w_{1}^{-1}$.  But the first word equals the identity in $G$
while the second word does not.  It follows that $\A$ cannot accept the Word Problem of $G$.
\end{proof}
\subsection{Context-free groups}
Recall that a group is context-free if it is finitely generated and its Word Problem
with respect to some (equivalently, every) finitely generated presentation is a
context-free language.

\begin{example}[The Word Problem for the free group]
\label{ex:free-cf}
Let $X$ be a finite set and let $G = F_X$ be the free group based on $X$.
Recall from Example~\ref{example:freegroup} that $G$ is in one-to-one correspondence with the set $L_{\text{\rm red}}(\Sigma)$ of all reduced words over
the alphabet $\Sigma = X \cup {X}^{-1}$. We adopt the convention that $\bar \varepsilon = \varepsilon$.
Consider the one-state deterministic pushdown automaton $\MM = (\{q_0\}, \Sigma, \Sigma,  \delta, q_0, \{q_0\}, \varepsilon)$. The automaton starts
with empty stack, accepts by empty stack, and the transition function is defined by
$$
\delta(q_0,a,z) =
\begin{cases}
(q_0,\varepsilon) & \mbox{ if } a = \bar z\\
(q_0,za) & \mbox{ otherwise}
\end{cases}
$$
for all $a,z \in \Sigma$.
It is clear that $L(\MM) = \WP(G:X;R)$, so the Word Problem for $G$ is context-free.
It follows that free groups are context-free.
\end{example}

For the next example we need the following well-known result (see~\cite[Lemma 6.1]{HU}).
\begin{lemma}
[The Pumping Lemma for context-free languages]
\label{l;pumping}
Let $\Sigma$ be a finite alphabet. Let $L \subset \Sigma^*$ be a context-free language.
Then there exists a positive integer $N = N(L)$ such that if $w \in L$ and $|w| \geq N$, then we can find $u,v,z,s,t \in \Sigma^*$ such that
$w = uvzst$, $|v|+|s| \geq 1, |vzs| \leq N$ and $uv^nzx^ny \in L$ for all $n \geq 0$.
\end{lemma}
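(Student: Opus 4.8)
The plan is to run the classical derivation-tree argument. Since $L$ is context-free, I fix a context-free grammar $\GG=(V,\Sigma,P,S_0)$ with $L(\GG)=L$. After deleting $\varepsilon$ from $L$ --- harmless, since the statement only concerns words of length $\geq N\geq 1$ --- and after eliminating useless symbols and $\varepsilon$-productions, I may assume $\GG$ is in Chomsky normal form: every production is of the form $A\vdash BC$ with $B,C\in V$ or $A\vdash a$ with $a\in\Sigma$; in particular every variable derives some non-empty terminal word, and every internal node of a derivation tree has at most two children (see, e.g., \cite{HU}). I set $N:=2^{|V|}$. The first elementary point to record is a size bound: a derivation tree all of whose root-to-leaf paths contain at most $h$ internal (variable-labelled) nodes yields a terminal word of length at most $2^{h-1}$, by induction on $h$. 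Consequently, if $w\in L$ with $|w|\geq N=2^{|V|}$, then every derivation tree for $w$ must contain a root-to-leaf path with at least $|V|+1$ internal nodes.

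Next, given such a $w$, I would fix a derivation tree $T$ for it (which exists since $S_0\thens w$) and a \emph{longest} root-to-leaf path $\pi$ in $T$. Among the lowest $|V|+1$ internal nodes of $\pi$, two carry the same variable $A$, at nodes $p_1$ (closer to the root) and $p_2$ (a strict descendant of $p_1$ along $\pi$). Writing $z$ for the terminal word yielded by the subtree at $p_2$, writing $vzs$ for the terminal word yielded by the subtree at $p_1$ (so $v$, resp.\ $s$, is the part left, resp.\ right, of the $p_2$-subtree), and writing $u$, resp.\ $t$, for the terminal words yielded to the left, resp.\ right, of $p_1$ in $T$, I read off $w=uvzst$ together with $S_0\thens uAt$, $A\thens vAs$, and $A\thens z$. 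Iterating the middle derivation yields $A\thens v^nAs^n\thens v^nzs^n$ for all $n\geq 0$, hence $S_0\thens uv^nzs^nt$, i.e.\ $uv^nzs^nt\in L$ for every $n\geq 0$ (the printed $uv^nzx^ny$ in the statement being a typo for $uv^nzs^nt$).

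It then remains to check the two length conditions. Since $\pi$ is a longest path of $T$ and $p_1$ sits among the lowest $|V|+1$ internal nodes of $\pi$, every root-to-leaf path in the subtree rooted at $p_1$ has at most $|V|+1$ internal nodes, so the size bound gives $|vzs|\leq 2^{|V|}=N$. And $|v|+|s|\geq 1$ because the derivation $A\thens vAs$ is non-empty: its first step must be a production $A\vdash BC$ (a terminal production $A\vdash a$ could not make $A$ reappear), the second occurrence of $A$ lies inside one of the two subtrees hanging from $p_1$, and the other of those subtrees yields a non-empty terminal word, every variable being productive.

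The one genuinely delicate point --- the part I would be most careful about --- is exactly the clause $|v|+|s|\geq 1$: this is what forces the preprocessing of $\GG$ (elimination of useless symbols and of $\varepsilon$-productions, i.e.\ passage to Chomsky normal form), and one must observe that deleting $\varepsilon$ from $L$ at the outset does not weaken the statement, since $|\varepsilon|=0<N$. Everything else is routine bookkeeping on the parse tree.
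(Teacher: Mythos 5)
The paper offers no proof of this lemma at all: it is stated as a ``well-known result'' with a pointer to \cite[Lemma 6.1]{HU}. Your argument is precisely the standard Chomsky-normal-form parse-tree proof found in that reference, and it is correct as written --- including the two points that actually need care, namely that the clause $|v|+|s|\geq 1$ is what forces the preprocessing of the grammar, and that the printed $uv^nzx^ny$ is a misprint for $uv^nzs^nt$.
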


With the notation from the above lemma, we  say that the word $uv^nzs^nt$ is obtained
from $w$ by \emph{pumping}  the subwords $v$ and $s$.

\begin{example}[The Word Problem for the free  abelian group of rank $2$]
\label{ex:free-abelian-not-cf}
Let $G = \Z^2$ with presentation $\langle x,y; [x,y]\rangle$.
Then $x^m y^m x^{-n} y^{-n} = 1$ in $G$ if and only if $m = n$.
We can now use Lemma~\ref{l;pumping}
to show that $L = \WP(G:X;R)$  is not context-free. Suppose by contradiction that $L$ is context-free and let $N = N(L)$ be the
corresponding positive integer. Consider the word
$w =x^{N+1}y^{N+1}x^{-(N+1)} y^{-(N+1)}$.
We clearly have $w \in L$. However, there are no subwords
$u,v,z,s,t$ of $w$ satisfying the conditions  described in the Pumping Lemma.
Indeed, from $|vzs| \leq N$ we deduce that $vzs$ is a subword of one of the following forms:
{\it (i)}~$x^m$, {\it (ii)} $x^py^q$, {\it (iii)} $y^h x^{-k}$, {\it (iv)} $x^{-p}y^{-q}$, or {\it (v)} $y^{-m}$, for suitable positive integers $m,p,q,h$ and $k$.
In all these cases, by pumping $n \geq 2$ times the subwords $v$ and $s$, we obtain a word $w'$ whose number of positive occurrences
of $x$ or of $y$ fails to equal the number of its negative occurrences  so that $w' \notin L$, contradicting the Pumping Lemma.
It follows that $L$ is not context-free. Therefore $\Z^2$ is not a context-free group.
\end{example}

\begin{proposition}
\label{p:finite-index-cf}
Let $G$ be a finitely generated group and $H$ a subgroup with $[G:H] < \infty$. Then $G$ is
context-free if and only if $H$ is context-free.
\end{proposition}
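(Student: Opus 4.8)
The plan is to treat the two directions separately. The forward direction is quick: if $G$ is context-free, then $H$, being of finite index, is finitely generated (Schreier's lemma), and a finitely generated subgroup of a context-free group inherits context-freeness of its Word Problem by the invariance principle recorded in Observation~\ref{r:invariance} (a machine reading a word over generators of $H$ simulates the machine for $G$ by expanding each generator of $H$ into a fixed word over generators of $G$). So the real content is the converse, and for that I would build a pushdown automaton for $\WP(G:X)$ out of one for the Word Problem of $H$ together with finite-state bookkeeping of cosets.

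Concretely, fix a finite generating set $X$ of $G$ with $\Sigma = X \cup X^{-1}$, set $k = [G:H]$, and choose a right transversal $T = \{t_1 = 1, \dots, t_k\}$ of $H$; write $\overline{g} \in T$ for the representative of $Hg$. Let $Y = \{\, t a\, \overline{ta}^{\,-1} : t\in T,\ a\in X,\ ta \neq \overline{ta}\,\}$ be the associated Schreier generating set of $H$ (by Reidemeister--Schreier it does generate $H$), so $H$ has context-free Word Problem $\WP(H:Y;S)$ for a suitable finite relator set $S$, again by Observation~\ref{r:invariance}. Reading a word $w = a_1 \cdots a_n$ over $\Sigma$, I track the coset representatives $t_{(0)} = 1$, $t_{(i)} = \overline{t_{(i-1)} a_i}$ and form the word $\psi(w) = y_1 \cdots y_n$ over $Y \cup Y^{-1}$, where $y_i = t_{(i-1)} a_i\, t_{(i)}^{-1}$, taken to be the empty word when this element is trivial. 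The identity I want, proved by a short induction on $n$, is that
\begin{equation*}
w = \psi(w)\, t_{(n)} \qquad \text{in } G.
\end{equation*}
Granting it, $w =_G 1$ holds if and only if $t_{(n)} = 1$ (equivalently $w \in H$) \emph{and} $\psi(w) =_H 1$, since $\psi(w)$ represents an element of $H$, $t_{(n)}$ lies in the transversal $T$, and $H \cap T = \{1\}$.

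The concluding step is to read off context-freeness from this characterization. The condition ``$t_{(n)} = 1$'' is decided by the finite automaton $\A$ with states $T$, initial and sole final state $t_1$, and transitions $t \xrightarrow{a} \overline{ta}$; hence $\{w \in \Sigma^* : w \in H\}$ is regular. The map $\psi$ is computed by a generalized sequential machine whose finite control is precisely the coset-tracking of $\A$, so $\psi^{-1}(\WP(H:Y;S))$ is context-free, context-free languages being closed under inverse GSM mappings (cf.~\cite{HU}). Since they are also closed under intersection with regular languages, $\WP(G:X) = L(\A) \cap \psi^{-1}(\WP(H:Y;S))$ is context-free; equivalently, one may directly run $\A$ in the finite control of a pushdown automaton that simulates, on the letters emitted by $\psi$, the pushdown automaton for $\WP(H:Y;S)$ furnished by Theorem~\ref{t:characterization--free}, accepting exactly when $\A$ is in state $t_1$ and the simulated automaton accepts. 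I expect the only genuinely fiddly point to be the verification of the displayed identity together with the check that each $y_i$ really lies in $Y \cup Y^{-1} \cup \{\varepsilon\}$ --- in particular when $a_i \in X^{-1}$, where one must recognize $y_i$ as the inverse of a Schreier generator; everything else reduces to standard closure properties of the class of context-free languages.
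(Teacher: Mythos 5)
Your proof is correct, but it organizes the converse differently from the paper. The paper first invokes the Poincar\'e lemma to replace $H$ by a finite-index subgroup that is \emph{normal} in $G$, writes down an explicit presentation of $G$ as an extension of $H$ by the finite quotient $K=G/H$, and then builds a pushdown automaton whose master control states track the image in $K$ while the stack simulates the automaton for $\WP(H)$ on the rewritten conjugates $w(r,j,\eta)$ and correction words $z(r,s)$. You dispense with normality altogether: a right transversal and the Schreier generating set give the rewriting map $\psi$ directly, the telescoping identity $w=\psi(w)\,t_{(n)}$ replaces the relations of the extension presentation, and the conclusion is delegated to standard closure properties (inverse deterministic GSM mappings and intersection with regular sets), with the explicit product machine offered as an alternative. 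What your route buys is the elimination of the normal-subgroup reduction and of the need to exhibit a presentation of $G$, at the cost of importing the closure of context-free languages under inverse GSM mappings from~\cite{HU}; the paper's route is more self-contained at the automaton level but needs the preliminary group-theoretic reduction. The two constructions produce essentially the same machine --- finite coset-tracking control driving a simulated pushdown automaton for $H$ --- and the points you flag as fiddly (the telescoping identity and the verification that $y_i\in Y\cup Y^{-1}\cup\{\varepsilon\}$ when $a_i\in X^{-1}$) do check out, so there is no gap.
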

\begin{proof}
We have already seen the ``only if'' part in Observation~\ref{r:invariance}.
Conversely, let $H$ be a finite index subgroup of $G$ and suppose that it is context-free.
Recall the following general fact from group theory (sometimes called the Poincar\'e Lemma): a subgroup of finite index in a
finitely generated group $G$ contains a subgroup which is normal in $G$ and also of finite index,
 and which is therefore finitely generated.   Thus this  normal subgroup is also context-free
 if the ambient subgroup is context-free.
We can therefore  suppose that $H$ is normal in $G$.
Let $K = G/H$   be the corresponding finite quotient with  $\psi \colon G \to K$
the natural quotient map. Let  $K = \{k_1 = 1, k_2,\ldots,k_n\}$.
Let $H = \langle h_1, h_2,\ldots,h_m: R \rangle$ be a presentation of $H$.
Since $H$ is normal in $G$, if $\bar k_i \in G$ is such that $\psi(\bar k_i) = k_i$ we have relations of the form
$$
{\bar k_r} h_{j}^\eta \bar k_{r}^{-1} = w(r,j,\eta)
$$
where $\eta = \pm 1$ and $w(r,j,\eta)$ is a word in the generators $h_i$ and their
inverses.  Because $H$ is a normal subgroup we also have the relations
$$
\bar k_r\bar k_s = z(r,s)\bar k_{t(r,s)}
$$
where $z(r,s)$ is a word in the generators $h_i$ and their inverses
determined by the relation $k_r k_s = k_{t(r,s)}$ in the multiplication table of $K$.
So a presentation of $G$ is
\[
G = \langle \bar k_2,\ldots,\bar k_n, h_1,h_2,\ldots,h_m; {\bar k_r}^{-1} h_{j}^\eta \bar k_r = w(r,j,\eta),  \bar k_r\bar k_s = z(r,s)\bar k_{t(r,s)}, R \rangle,
\]
where $r,s = 2,\ldots, n$, $j = 1,2,\ldots,m$, and $\eta = \pm 1$.

\par
Let $\MM$ be a pushdown automaton accepting the Word Problem of $H$ for its presentation
above. The idea of constructing a pushdown automaton $\widehat \MM$ to accept the Word Problem of $G$ for the above presentation
is very simple. On reading a word $w$, the automaton $\widehat \MM$ uses extra master control states to keep track of the image
$\psi(w)$  in $K$ and uses the stack to simulate $\MM$ on the Word Problem of $H$.   The automaton $\widehat \MM$ starts with
empty stack in  the master control state  corresponding to $1_K$.
If  $\widehat \MM$ is in the master control state corresponding to $k_r$ and  $\widehat \MM$ reads a letter $\overline{k_s}$ it
uses a sequence of auxiliary  states to simulate $\MM$ reading the word $z(r,s)$ and then changes to the master control state
corresponding to $k_t$ where  $k_t = k_r k_s$ in $K$.
If $\widehat \MM$ reads a letter $h_j^{\eta}$ while in the master
control state corresponding to $k_r$ in the quotient group it uses a series of auxiliary states to simulate $\MM$ reading the word
$w(r,j,\eta)$.  Finally,  $\widehat \MM$ accepts by having empty stack and master control  state corresponding to $1_K$.
\end{proof}

\begin{definition}
A group $G$ is \emph{virtually free} if $G$ contains a free subgroup $H$ of finite index in $G$.
\end{definition}

\begin{corollary}
\label{c:vf-cf}
A finitely generated virtually free group is context-free.
\end{corollary}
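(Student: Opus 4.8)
The plan is to obtain this as an immediate consequence of the two facts already in hand: Example~\ref{ex:free-cf}, which exhibits a (one-state) pushdown automaton accepting the Word Problem of a finitely generated free group, so that free groups are context-free; and Proposition~\ref{p:finite-index-cf}, which transfers the property of being context-free between a finitely generated group and any of its finite-index subgroups.

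First I would observe that the free subgroup $H$ appearing in the definition of a virtually free group may be taken to be finitely generated: since $[G:H] < \infty$ and $G$ is finitely generated, $H$ is finitely generated. This is precisely the elementary subgroup-index fact (the ``Poincar\'e Lemma''-flavoured statement) already invoked inside the proof of Proposition~\ref{p:finite-index-cf}. A finitely generated free group is $F_n$ for some finite $n \geq 0$.

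Next I would dispose of the degenerate case. If $n = 0$, then $H$ is trivial and hence $G$ is finite; by Anisimov's theorem (Theorem~\ref{t:anisimov}), together with Theorems~\ref{t:characterization-regular} and~\ref{t:characterization--free}, a finite group has regular, and a fortiori context-free, Word Problem. If $n \geq 1$, then $H \cong F_n$ is context-free by Example~\ref{ex:free-cf}.

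Finally, in all cases $H$ is a context-free subgroup of finite index in $G$, so Proposition~\ref{p:finite-index-cf} yields that $G$ is context-free, completing the argument. There is no real obstacle here — the statement is genuinely a corollary of what precedes it — and the only point demanding a word of justification is the reduction to a \emph{finitely generated} free subgroup, which is the standard index fact noted above.
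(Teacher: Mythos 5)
Your argument is correct and follows the paper's own proof essentially verbatim: note that the finite-index free subgroup $H$ is finitely generated, invoke Example~\ref{ex:free-cf} to see that $H$ is context-free, and apply the ``if'' direction of Proposition~\ref{p:finite-index-cf}. The separate treatment of the rank-zero case is harmless but unnecessary, since the trivial group is a finitely generated free group and the pushdown automaton of Example~\ref{ex:free-cf} (with empty generating set) already covers it.
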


\begin{proof} Let $G$ be a finitely generated virtually-free group and let $H \subset G$
be a free subgroup of finite index. Then $H$ is finitely generated and, as seen in Example
\ref{ex:free-cf}, context-free. By the ``if'' part of the previous proposition, we have that $G$ is context-free as well.
\end{proof}

Muller and Schupp~\cite{MS1} proved the following characterization
of groups with context-free Word Problem.
\begin{theorem}[Muller-Schupp]
\label{t:MS-characterization context-free}
Let $G$ be a finitely generated group. Then $G$ is context-free if and only
if $G$ is virtually free.
\end{theorem}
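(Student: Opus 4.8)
The converse implication is already in hand: a finitely generated virtually free group is context-free by Corollary~\ref{c:vf-cf}. So the plan is to prove the non-trivial implication, that a context-free group $G$ is virtually free, following the strategy of Muller and Schupp: combine the Stallings structure theorem~\cite{stallings} with Dunwoody's accessibility theorem. To apply these tools I first need $G$ to be finitely presented, and I need to know that none of the group-theoretic pieces that appear along the way can be one-ended; both facts are extracted from the Pumping Lemma for context-free languages (Lemma~\ref{l;pumping}).

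\emph{Finite presentability.} Fix a finite presentation $G=\langle X;R\rangle$, set $L=\WP(G:X;R)$, and let $N=N(L)$ be the pumping constant. Given $w\approx\varepsilon$ with $|w|\ge N$, write $w=uvzst$ with $|v|+|s|\ge 1$, $|vzs|\le N$ and $uv^nzs^nt\in L$ for all $n\ge 0$. Pumping down ($n=0$) gives $uzt\approx\varepsilon$, so $uvzst$ and $uzt$ represent the same element of $G$; cancelling $u$ on the left and $t$ on the right yields $vzs=z$ in $G$, i.e.\ $r:=vzsz^{-1}$ has length at most $2N$ and $r\approx\varepsilon$. Inserting the trivial relator $z^{-1}z$ after the prefix $uvzs$ and then deleting $r$ rewrites $w$ as the strictly shorter word $uzt$, still representing $1_G$. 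Iterating, every word representing $1_G$ reduces to $\varepsilon$ using only relators of length $\le 2N$, so the finitely many such relators already present $G$.

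\emph{The crux.} The heart of the matter — and the step I expect to be the main obstacle — is to show that context-freeness forces the Cayley graph $\Gamma=\Gamma(G:X;R)$ to be so tree-like that $G$ cannot be one-ended. Concretely, I would prove that there is a constant $k$ such that for every $n$ and every connected component $C$ of $\Gamma\setminus B_n$ at most $k$ vertices of the ball $B_n$ are adjacent to $C$ — uniformly bounded cut-sets. The mechanism is pumping once more: if a component were attached to $B_n$ in many places one could trace a closed path through $1_G$ that goes deep into $C$ and returns near $B_n$; its label lies in $L$, and pumping that label produces a word of $L$ which, read back in $\Gamma$, cannot close up — a contradiction. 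Bounded cut-sets, together with the vertex-transitivity and $|X|$-regularity of a Cayley graph, force $\Gamma$ to have only finitely many isomorphism types of cones (finitary end-structure), so in particular its number of ends is $0$, $2$ or $\infty$, never $1$. The delicate part is choosing the geodesics correctly and keeping track of exactly which subword of the Word Problem is pumped, so as to be sure that the pumped block sits inside a single component and that pumping genuinely destroys the balancing needed to represent $1_G$. (Alternatively, once the Muller--Schupp characterization of complete transition graphs of pushdown automata as precisely the finitely generated graphs with finitary end-structure is available, one may quote that instead.)

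\emph{Assembling the pieces.} If $G$ is finite it is trivially virtually free, so assume $G$ is infinite. By the crux step $G$ has at least two ends, so the Stallings structure theorem gives a nontrivial decomposition of $G$ as an amalgam $A *_{C} B$ or as an HNN extension over a finite subgroup $C$. Since $G$ is finitely presented, Dunwoody's accessibility theorem ensures that the iterated splitting process terminates: $G$ is the fundamental group of a finite graph of groups with all edge groups finite and each vertex group finite or one-ended. Each vertex group is finitely generated (the edge groups being finite, hence finitely generated, and $G$ finitely generated) and is a subgroup of $G$, hence context-free by Observation~\ref{r:invariance}; since a context-free group is never one-ended, every vertex group is in fact finite. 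Therefore $G$ is the fundamental group of a finite graph of finite groups — equivalently, $G$ acts cocompactly on its Bass--Serre tree with finite vertex stabilizers — and such a group is virtually free by the theorem of Karrass, Pietrowski and Solitar. Together with Corollary~\ref{c:vf-cf} for the converse, this proves the theorem.
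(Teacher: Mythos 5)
The paper never actually proves this theorem: being a survey, it states the result and only indicates the ingredients in Section~\ref{sec:FGG} (the Stallings Structure Theorem, the fact that every finitely generated subgroup of a context-free group is either finite or has more than one end, and accessibility, obtained either from Senizergues--Linnell or from finite presentability together with Dunwoody's theorem). Your architecture is exactly that one. The easy direction via Corollary~\ref{c:vf-cf}, your finite-presentability argument (pump down to get $vzs=z$ in $G$ and rewrite $w$ as the strictly shorter relator $uzt$ using only relators of length at most $2N$ --- though you should start from a finitely \emph{generated} presentation rather than ``fix a finite presentation'' of the group whose finite presentability you are about to establish), and the final assembly via Stallings, Dunwoody, and Karrass--Pietrowski--Solitar are all sound.

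The genuine gap is the step you yourself call the crux, and it occurs in two places. First, the Pumping Lemma (Lemma~\ref{l;pumping}) is not the right tool: it yields a single pumpable factorization of one long word, whereas the Muller--Schupp argument must control \emph{every} sufficiently long segment of \emph{every} relator. What is actually used is a Chomsky normal form grammar for $\WP(G:X;R)$: if a variable $T$ derives the segment $w[i..j]$ of a relator $w$, replacing that subderivation by a derivation of a shortest word $z_T\in L_T$ produces another relator, whence $w[i..j]$ and $z_T$ represent the same element of $G$; this bounds the diagonals of a triangulation of every closed path by a constant $K$ depending only on the grammar, and it is this $K$-triangulability that gives frontiers of bounded diameter (hence, by bounded degree, bounded cardinality) for the components of $\Gamma\setminus B_n$. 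Second, the inference ``bounded cut-sets, hence the number of ends is never $1$'' is true but lies in the same circle of ideas as the corollary closing Section~\ref{sec:FGG} ($c(\Gamma)<\infty$ if and only if $G$ is virtually free), so it cannot simply be asserted without courting circularity; one needs a separate argument that an infinite one-ended Cayley graph cannot have frontiers of uniformly bounded diameter --- for instance, by observing that any two geodesic rays from the origin would then have to remain within a fixed distance $K'$ of one another for all time, which forces linear growth and hence a two-ended group, a contradiction. Neither objection changes the strategy, which is the correct and standard one, but as written the central geometric lemma --- the only part of the theorem that carries real difficulty --- remains a promissory note rather than a proof.
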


\begin{remark}{\rm
In \cite{CW} Ceccherini-Silberstein and Woess introduced and studied the concept of a
\emph{context-free pair of group}. Such a pair $(G,K)$ consists of a finitely generated group
$G = \langle X; R \rangle$ together with  a subgroup $K \subset G$ for which the language consisting of
all words over $\Sigma^* = X \cup X^{-1}$ representing an element in $K$ is context-free.
(When $K$ reduces to the identity element, this clearly specializes to the above definition
of $G$ to be a context-free group.) These investigations were extended by Woess in \cite{W-II} who applied them to the study of random walk asymptotics yielding a complete proof of the local limit theorem for return probabilities on any context-free group.}
\end{remark}

\subsection{Subgroups and embeddability}
We briefly mention some applications of formal language theory to subgroups and embeddability.

\begin{definition}
Let $G = \langle X;R \rangle$  be a finitely generated group with group alphabet
$\Sigma = X \cup X^{-1}$. Let $\psi \colon \Sigma^*  \to G$ be the natural map. Let $S \subset G$ be a subset.
 An \emph{enumeration} of $S$ is a subset $L \subset \Sigma^*$ such that $\psi(L) = S$. Then one says that
$L$ is a \emph{regular} (resp. \emph{context-free}, resp. \emph{computable}) enumeration provided that $L$
is  a regular (resp. context-free, resp. computably  enumerable) language.
\end{definition}

Anisimov and Seifert~\cite{AS} proved in 1975 the following theorem.

\begin{theorem}[Anisimov-Seifert]
\label{t:anisimov-seifert}
Let $G$ be a finitely generated group and let $H \subset G$
be a subgroup of $G$. Then $H$ is finitely generated if and only if $H$ has a regular enumeration.
\end{theorem}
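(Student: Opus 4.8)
The plan is to prove the two implications separately; the forward implication is routine, and the reverse one carries all the content.

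For the ``only if'' direction, suppose $H = \langle h_1,\dots,h_k\rangle$. I would choose words $w_1,\dots,w_k \in \Sigma^*$ with $\psi(w_i) = h_i$ and let $L$ be the set of all concatenations of words from the finite set $\{w_1,w_1^{-1},\dots,w_k,w_k^{-1}\}$. This $L$ is regular: one writes a right-linear grammar for it with start symbol $S$, a production $S \vdash \varepsilon$, and for each $w = b_1\cdots b_\ell$ in that finite set a chain $S \vdash b_1 T_1$, $T_1 \vdash b_2 T_2,\dots, T_{\ell-1}\vdash b_\ell S$ through fresh auxiliary variables. Since $\psi(L)$ is exactly the set of products of the $h_i^{\pm1}$, we get $\psi(L) = H$, so $L$ is a regular enumeration of $H$.

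For the ``if'' direction, suppose $\psi(L) = H$ with $L$ regular. By Theorem~\ref{t:characterization-regular} fix a deterministic finite automaton $\mathcal A = (Q,\Sigma,\delta,q_0,F)$ with $L(\mathcal A) = L$. First discard every state that is unreachable from $q_0$ or from which no path reaches $F$; this leaves the accepted language unchanged, and since $1_G \in H$ forces $L$ to contain a word representing $1_G$ we have $L \neq \emptyset$, so the trimmed automaton is nonempty and $q_0$ survives. Now for each remaining state $q$ fix a word $\ell_q$ labelling a path $q_0 \to q$ and a word $m_q$ labelling a path from $q$ to some final state, chosen so that $\ell_{q_0} = \varepsilon$ and --- this is the key choice --- $m_q = \varepsilon$ whenever $q \in F$. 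Let $S \subset G$ be the finite set consisting of $\psi(\ell_p\,a\,m_q)$ for every transition $\delta(p,a) = q$ together with $\psi(\ell_q m_q)$ for every state $q$. Every word listed here labels a path from $q_0$ into $F$, hence lies in $L$, so $S \subseteq H$. For the reverse inclusion, take $w = a_1\cdots a_m \in L$ with accepting run $q_0 = r_0, r_1,\dots, r_m \in F$; inserting the trivial factors $\psi(\ell_{r_i})^{-1}\psi(\ell_{r_i})$ and telescoping (using $\ell_{r_0} = \varepsilon$) gives $\psi(w) = (\prod_{i=1}^m \psi(\ell_{r_{i-1}}\,a_i\,\ell_{r_i}^{-1}))\,\psi(\ell_{r_m})$. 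Each factor equals $\psi(\ell_{r_{i-1}} a_i m_{r_i})\cdot\psi(\ell_{r_i} m_{r_i})^{-1} \in \langle S\rangle$, and $\psi(\ell_{r_m}) = \psi(\ell_{r_m} m_{r_m}) \in S$ because $r_m \in F$ and hence $m_{r_m} = \varepsilon$. Thus $H = \psi(L) \subseteq \langle S\rangle \subseteq H$, so $H = \langle S\rangle$ is finitely generated.

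The step I expect to be the real obstacle is making sure the candidate generators actually lie in $H$: the naive choice $\psi(\ell_p\,a\,\ell_q^{-1})$ need not be in $H$, and with it one would only sandwich $H$ between itself and some possibly larger finitely generated subgroup of $G$. The repair is to complete each such ``local transition word'' to a genuinely accepted word by appending the return word $m_q$ --- which is exactly what forces the preliminary trimming of $\mathcal A$ --- so that $\psi(\ell_p\,a\,\ell_q^{-1}) = \psi(\ell_p\,a\,m_q)\,\psi(\ell_q m_q)^{-1}$ is exhibited as a quotient of two elements of $L$, hence of two elements of $H$.
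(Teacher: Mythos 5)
Your proof is correct. Note that the paper itself gives no proof of this theorem --- it is stated as a cited result of Anisimov and Seifert --- so there is nothing to compare against; your argument is the standard one (Kleene star of a finite set of generator words for the easy direction, and for the converse the extraction of a finite generating set from a trimmed automaton by completing each local word $\ell_p\,a\,\ell_q^{-1}$ to an accepted word via the return words $m_q$), and you correctly identify and repair the one genuinely delicate point, namely that the naive candidate generators need not lie in $H$.
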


Anisimov and Seifert also proved that context-free groups are finitely presentable, a fact  used in the proof of the characterization theorem.  The following more general result is  due to Frougny, Sakarovitch, and Schupp~\cite{FSS}.

\begin{theorem}[Frougny-Sakarovitch-Schupp]
Let $G$ be a finitely generated group and let $N \subset G$
be a normal subgroup of $G$. Then $N$ is finitely generated as a normal subgroup
(that is, $N$ equals the normal closure of a finite set of elements
of $G$) if and only if $N$ has a context-free enumeration.
\end{theorem}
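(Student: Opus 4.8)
The plan is to prove the two implications separately. The forward direction --- if $N$ is the normal closure in $G$ of a finite set $\{c_1,\dots,c_k\}$, then $N$ has a context-free enumeration --- I would handle by a direct grammar construction. Choose words $w_1,\dots,w_k\in\Sigma^*$ with $\psi(w_i)=c_i$, and write down the context-free grammar with variables $\{S,T\}$, start symbol $S$, and productions $S\vdash\varepsilon$, $S\vdash TS$, $T\vdash aTa^{-1}$ for each $a\in\Sigma$, and $T\vdash w_i$ and $T\vdash w_i^{-1}$ for $1\le i\le k$ (where $w_i^{-1}$ denotes the formal inverse word). Because $G$ is finitely generated, $\Sigma$ is finite, so this is a genuine context-free grammar; the language $L$ it generates consists of all products $u_1t_1u_1^{-1}\cdots u_mt_mu_m^{-1}$ with $u_j\in\Sigma^*$ and each $t_j\in\{w_1^{\pm1},\dots,w_k^{\pm1}\}$. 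Since $X$ generates $G$, the elements $\psi(u_j)c_i^{\pm1}\psi(u_j)^{-1}$ run over all $G$-conjugates of the $c_i^{\pm1}$ as the $u_j$ vary, so $\psi(L)$ is exactly the set of all finite products of such conjugates, namely $N$. Thus $L$ is a context-free enumeration of $N$.

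For the converse, suppose $L\subset\Sigma^*$ is context-free with $\psi(L)=N$, and let $\kappa=\kappa(L)$ be the constant provided by the Pumping Lemma (Lemma~\ref{l;pumping}). I would set
$$
F=\{\psi(\zeta):\zeta\in\Sigma^*,\ |\zeta|\le 2\kappa,\ \psi(\zeta)\in N\},
$$
a finite subset of $N$ (finiteness uses that $\Sigma$ is finite), and claim that $N=\langle\langle F\rangle\rangle_G$. The inclusion $\langle\langle F\rangle\rangle_G\subseteq N$ is clear since $N$ is a normal subgroup of $G$ containing $F$. For the reverse inclusion, since $\psi(L)=N$ it suffices to prove $\psi(w)\in\langle\langle F\rangle\rangle_G$ for every $w\in L$, and I would do this by strong induction on $|w|$. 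If $|w|\le\kappa$, then $\psi(w)\in F$ at once. If $|w|>\kappa$, apply the Pumping Lemma to get $w=uvzst$ with $|v|+|s|\ge1$, $|vzs|\le\kappa$, and $uv^nzs^nt\in L$ for all $n\ge0$; in particular $w_0:=uzt\in L$ with $|w_0|<|w|$. Writing $\overline z$ for the formal inverse word of $z$ and putting $p=\psi(vzs\overline z)$ --- the image of a word of length $|vzs|+|z|\le2\kappa$ --- one checks that $\psi(w)=\bigl(\psi(u)\,p\,\psi(u)^{-1}\bigr)\,\psi(w_0)$. As $w,w_0\in L$, both $\psi(w)$ and $\psi(w_0)$ lie in $N$, hence $\psi(u)\,p\,\psi(u)^{-1}=\psi(w)\psi(w_0)^{-1}\in N$; normality of $N$ then gives $p\in N$, so $p\in F$. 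Consequently $\psi(u)\,p\,\psi(u)^{-1}\in\langle\langle F\rangle\rangle_G$, while $\psi(w_0)\in\langle\langle F\rangle\rangle_G$ by the inductive hypothesis, and therefore $\psi(w)\in\langle\langle F\rangle\rangle_G$.

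The crux --- and the reason the converse is not a mere unwinding of definitions --- is the remark that the pumped factor $p=\psi(vzs\overline z)$, though carved out of a subword of an arbitrarily long word $w\in L$, is always represented by a word of \emph{bounded} length ($2\kappa$) and always lies in $N$ (being equal, up to conjugation by $\psi(u)$, to $\psi(w)\psi(w_0)^{-1}$ with $w,w_0\in L$). Hence, as $w$ ranges over $L$, these factors take only finitely many values, all contained in $N$, which is precisely what allows a single finite set $F$ to absorb them; the complementary factor $w_0=uzt$ meanwhile remains in $L$, so the induction descends on word length. I expect the remaining points to be routine bookkeeping: the length estimate via $|z|\le|vzs|\le\kappa$, the well-foundedness of the induction from $|v|+|s|\ge1$, and the standard closure of a normal closure under $G$-conjugation and products.
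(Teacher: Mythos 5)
The paper states this theorem only as a citation to the Frougny--Sakarovitch--Schupp article and gives no proof of it in the text, so there is no in-paper argument to compare against; your proposal must therefore stand on its own, and it does. Both directions check out. For the forward direction, the grammar with $T \vdash aTa^{-1}$ ($a \in \Sigma$) and $T \vdash w_i^{\pm 1}$ generates exactly the words $u_1 t_1 u_1^{-1}\cdots u_m t_m u_m^{-1}$, whose images are precisely the products of conjugates of the $c_i^{\pm 1}$, i.e.\ the normal closure; this is context-free and the empty product harmlessly yields $1_G$. For the converse, the identity $\psi(w) = \bigl(\psi(u)\,\psi(vzs\overline{z})\,\psi(u)^{-1}\bigr)\psi(uzt)$ is a correct cancellation computation, the length bound $|vzs\overline{z}| \le |vzs|+|z| \le 2\kappa$ is right, membership of $p=\psi(vzs\overline{z})$ in $N$ follows from $\psi(w)\psi(w_0)^{-1} \in N$ together with normality, and the induction is well-founded because $|v|+|s|\ge 1$ forces $|uzt| < |w|$. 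The one stylistic caveat is that your base case $|w| \le \kappa$ and inductive case $|w| > \kappa$ together cover all words and the Pumping Lemma (which requires $|w|\ge \kappa$) is indeed applicable in the second case, so no gap arises there. This is the natural argument and, as far as the standard literature goes, essentially the intended one: the crux you identify --- that the pumped commutator-like factor is always represented by a word of bounded length and always lies in $N$ --- is exactly what makes a single finite set suffice.
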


\begin{definition} A \emph{computably enumerable presentation} (also called  a \emph{recursive presentation}) is a group presentation
$G = \langle X; R \rangle$ where the set $X$ of generators is finite and the set $R$ of defining relators is computably enumerable.
\end{definition}

Recall that a group $H$ is said to be \emph{embeddable} into a group $G$ provided there
exists an injective homomorphism $\psi \colon H \to G$.
The  remarkable Higman Embedding Theorem~\cite{higman} shows that the connection between group theory and computability is intrinsic.

\begin{theorem}[Higman]
A finitely generated group $H$ is embeddable into some finitely presented group  if and only if $H$ admits a
computably enumerable  presentation.
\end{theorem}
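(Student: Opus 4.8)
The plan is to prove the two implications separately; the forward direction is routine, and the converse is the heart of Higman's theorem. For the forward direction, suppose $H$ is a finitely generated subgroup of a finitely presented group $G = \langle Y; S\rangle$ with $S$ finite. The set of words over $Y \cup Y^{-1}$ representing $1_G$ is computably enumerable: one enumerates all products $u_1 s_1 u_1^{-1} \cdots u_m s_m u_m^{-1}$ with $s_i \in S^{\pm 1}$ and $u_i$ arbitrary words, freely reduces them, and lists the outcomes, exactly as in the area expression \eqref{e:dehn-area}. Now choose words $w_1,\dots,w_n$ over $Y \cup Y^{-1}$ representing a generating set of $H$, let $F = F(x_1,\dots,x_n)$ be free, and let $N = \{v \in F : v(w_1,\dots,w_n) = 1_G\}$, so that $H \cong F/N$. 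Since the substitution $v \mapsto v(w_1,\dots,w_n)$ is computable and the word problem of $G$ is computably enumerable, $N$ is a computably enumerable subset of $F$, and $\langle x_1,\dots,x_n; N\rangle$ is a computably enumerable presentation of $H$.

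For the converse, call a subgroup $K$ of a finitely generated group $G$ \emph{benign} if there exist a finitely presented group $\bar G \supseteq G$ and a finitely generated subgroup $L \le \bar G$ with $K = G \cap L$ (note $K$ itself need not be finitely generated). The argument proceeds through three layers. First, a reduction: a finitely generated group $H$ embeds in a finitely presented group if and only if $H \cong F/N$ for some finitely generated free $F$ and some benign normal subgroup $N \trianglelefteq F$. For ``only if'', embed $H$ in a finitely presented $G$, let $F \twoheadrightarrow H$ have kernel $N$, put $\bar G = G \times F$ (again finitely presented), identify $F$ with $\{1\}\times F$, and let $L$ be the graph of the composite $F \twoheadrightarrow H \hookrightarrow G$; then $N = F \cap L$. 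For ``if'', from $N = F \cap L$ with $L$ finitely generated in a finitely presented $\bar F$, a standard combination of amalgamated free products and HNN extensions over finitely generated subgroups — each of which preserves finite presentability — produces a finitely presented group containing $F/N$. Second, closure properties: inside a fixed finitely generated group, benign subgroups are closed under finite intersections and finite joins, and benignity is inherited under preimages along homomorphisms of finitely generated groups and under passage to free factors; these follow from the same amalgam/HNN bookkeeping. Third, the coding lemma: for $A \subseteq \N$, a suitable coding subgroup of a free group determined by $A$ (for instance the subgroup of $F(a,b)$ generated by $\{b^{-k} a b^{k} : k \in A\}$) is benign if and only if $A$ is computably enumerable.

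Granting these, the assembly is as follows. Given $H = \langle X; R\rangle$ with $X$ finite and $R \subseteq \Sigma^*$ computably enumerable, where $\Sigma = X \cup X^{-1}$, fix a computable bijection $\Sigma^* \to \N$, so that $R$ corresponds to a computably enumerable subset of $\N$. The coding lemma yields a benign subgroup, and the closure properties — applied to conjugates by the finitely many generators, to finite joins, and to preimages — let one conclude that the normal closure $N$ of $R$ in $F(X)$ is benign in $F(X)$. By the reduction lemma, $H \cong F(X)/N$ embeds in a finitely presented group, which together with the forward direction completes the proof.

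The main obstacle is the coding lemma together with the closure-property bookkeeping: translating the purely recursion-theoretic assertion ``$A$ is computably enumerable'' into a finite group presentation requires engineering a finite cascade of HNN extensions that simulates a recursive definition $A = \{n : \exists m\, R(n,m)\}$, and every claimed equality of subgroups must then be verified via Britton's Lemma for HNN extensions. By contrast, the reductions in the first layer and the number-theoretic encoding in the assembly are comparatively mechanical once the coding lemma is available.
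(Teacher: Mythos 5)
The paper does not prove this theorem: it is a survey, and Higman's Embedding Theorem is stated with only a citation to Higman's original article, so there is no in-paper argument to compare yours against. Judged on its own terms, your proposal follows the standard Higman--Lyndon--Schupp strategy. The forward direction is complete and correct: the word problem of a finitely presented group is computably enumerable (enumerate products of conjugates of relators as in \eqref{e:dehn-area}), substitution $v \mapsto v(w_1,\dots,w_n)$ is computable, so the kernel $N$ of $F(x_1,\dots,x_n)\twoheadrightarrow H$ is computably enumerable and $\langle x_1,\dots,x_n; N\rangle$ is a computably enumerable presentation. For the converse, your three-layer architecture (benign subgroups, the reduction to benignity of the kernel, closure properties, and the coding lemma) is the right skeleton, and your definition of benign via $K = G \cap L$ with $L$ finitely generated inside a finitely presented overgroup is one of the standard equivalent formulations.

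Two reservations. First, the converse as written is a roadmap rather than a proof: the coding lemma (that $\langle b^{-k}ab^{k} : k \in A\rangle$ is benign iff $A$ is computably enumerable), the closure properties, and the ``if'' half of the reduction lemma are all asserted but not established, and you correctly identify that essentially all of the theorem's difficulty lives there; nothing in your sketch would substitute for that work. Second, and more concretely, the assembly step is wrong as stated: the normal closure of $R$ in $F(X)$ is \emph{not} obtained by finitely many joins of conjugates of $\langle R\rangle$ by the generators --- that only produces the subgroup generated by $R$ together with its generator-conjugates, and iterating would require an infinite join, which your closure properties do not cover. The standard repair is to observe that the set $\{w r w^{-1} : r \in R,\ w \in \Sigma^*\}$ is itself computably enumerable whenever $R$ is, and to apply the main lemma (a subgroup of a finitely generated free group generated by a computably enumerable set of words is benign) directly to this larger set; alternatively one invokes the separate lemma that the normal closure of a benign subgroup of a free group is benign. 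Either fix is routine given the machinery you have set up, but the step as you phrased it does not go through.
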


\subsection{Basic groups and simple languages}

We next consider a special subclass of deterministic context-free languages.

\begin{definition}
Let $\Sigma$ be a finite alphabet. A language $L \subset \Sigma^*$ is called
\emph{simple} if it is accepted by a $1$-state deterministic pushdown automaton
which accepts by empty stack and is required to halt when it empties its stack.
\end{definition}

The convention that the automaton accepting a simple language halts on empty stack
makes a simple language $L$ \emph{prefix-free}, that is, if $w= uv \in L $ with $u$ and $v$ nontrivial then $u \notin L$.
The main reference for simple languages is Harrison~\cite{harrison}.

Recall that given a language $L \subset \Sigma^*$, the \emph{Kleene star} of $L$ is the language $L^*$
over $\Sigma$ defined by
\begin{equation}
\label{e:kleene}
L^* = \{w_1w_2 \cdots w_n: w_i \in L \textup{ where } i=1,2,\ldots,n \textup{ and } n=0,1,2,\ldots\}.
\end{equation}
In other words, $L^*$ is the submonoid of $\Sigma^*$ generated by $L$.

Since we are interested in groups, the convention that the automaton must halt on empty stack
is rather unnatural.  Note that the language accepted by a $1$-state deterministic pushdown automaton which is
not required to halt on empty stack  is the Kleene star $L^*$, of a simple language $L$ (see Equation \eqref{e:kleene}).

\begin{example}
\label{e:mult-table-basic}
We show that the Word Problem for a finite group with respect to its multiplication table presentation is the Kleene star
of a simple language.
Let $G = \{g_1,g_2, \ldots, g_n\}$ (with $g_1 = 1_G$) be a finite group and consider its multiplication table presentation
\[
G = \langle g_2, \ldots, g_n; g_i g_j = g_{k(i,j)}\rangle
\]
(see Example~\ref{ex:mult-table}.(a)). Let $\MM$ be the deterministic single state pushdown automaton whose input alphabet
and stack alphabet are the  set $\{g_2,\ldots,g_n\}$ of non-identity elements of $G$. The automaton $\MM$ starts with
empty stack and will always have at most one symbol on the stack. If the stack is empty and $\MM$ reads $g_i$ then $\MM$
puts $g_i$ on the stack. If the symbol on the stack is $g_i$ and $\MM$ reads $g_j$ then $\MM$ replaces $g_i$ by the
product $g_{k(i,j)}$  if $g_i g_j $ is not the identity of $G$ and $\MM$ empties the stack otherwise.  It is clear that $\MM$ has empty stack
exactly when the product of the elements it has read so far is the identity, so
$  L(\MM)^* = \WP(G:g_2, \ldots, g_n; g_i g_j = g_{k(i,j)}) $.
\end{example}

\begin{definition}
A group $G$ is called \emph{basic} if it is the free product of
finitely many finite groups and  a free group of finite rank, i.e,
$G \cong G_1*G_2 * \cdots *G_k * F_n$, where $G_i$ is a finite group, $i=1,2,\ldots,k$, and $F_n$ is
the free group of rank $n$, with $k,n \geq 0$.
\end{definition}

Note that finite groups and finitely generated free groups are basic groups.
We saw in Example \ref{e:mult-table-basic} that the Word Problem of a finite group  with respect to the multiplication table presentation is the star of a simple language.
Analogously, it follows from Example \ref{ex:free-cf} that the Word Problem of a finitely generated free group with respect to the free presentation is the star of a simple language
as well.
More generally, if we take the ``canonical  presentation''  of a basic group given by the disjoint union of the multiplication table presentations of the finite factors and the free presentation of the free group, then the corresponding Word Problem is the star of a simple language.  In general, however, having a Word Problem which is the Kleene star of a simple language depends on the given presentation.
We give an example below (Example~\ref{bad-Z}).

Haring-Smith~\cite{HS} characterized groups whose Word Problem is the star of a simple language.

\begin{theorem}[Haring-Smith]
A finitely generated group $G$ is basic if and only if it
has a finitely generated presentation $G = \langle X; R \rangle$ such that
the corresponding Word Problem is the Kleene star of a simple language.
\end{theorem}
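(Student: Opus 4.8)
\emph{The forward implication.} Suppose $G$ is basic, $G \cong G_1 \ast \cdots \ast G_k \ast F_n$ with each $G_i$ finite and $F_n$ free on a finite set $Y$. Take the canonical finite presentation $\langle X; R\rangle$ with $X = \bigsqcup_{i=1}^k (G_i\setminus\{1\}) \sqcup Y$, with $R$ the union of the multiplication-table relations of the $G_i$ (no relation involves $Y$), and $\Sigma = X\cup X^{-1}$. The plan is to build a one-state deterministic pushdown automaton $\MM$, stack alphabet $Z=\Sigma$, never required to halt on empty stack, whose stack after reading $w\in\Sigma^\ast$ holds, from the bottom up, the free-product normal form of $w$: the reduced sequence of syllables in successive distinct factors, each free syllable written out as a reduced word, each finite-factor syllable recorded as one non-identity symbol. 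The transitions are forced by this invariant: on reading $a$ with top symbol $z$, (i) if $z$ and $a$ lie in the same finite factor $G_i$, replace $z$ by the product $za$ computed in $G_i$, popping $z$ if $za=1$; (ii) if $z,a\in Y\cup Y^{-1}$ with $a=z^{-1}$, pop $z$; (iii) otherwise, including when the stack is empty, push $a$. An induction on $|w|$, using the normal-form theorem for free products and uniqueness of reduced words in a free group, shows $\MM$ maintains the invariant, so the stack is empty exactly when $w=_G 1$ and $L(\MM)=\WP(G:X;R)$. Since $\MM$ is a one-state deterministic pushdown automaton not required to halt on empty stack, $L(\MM)$ is the Kleene star of a simple language, as recalled above; this gives the implication.

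\emph{The converse; first reduction.} Conversely suppose $\WP(G:X;R)=L^\ast$ with $L$ simple, accepted by a one-state deterministic pushdown automaton $\MM_L$ that accepts by, and halts on, empty stack. A simple language is prefix-free, so no proper nonempty prefix of a word of $L$ can lie in $L^\ast=\WP$; hence each $w\in L$ represents $1_G$ while no proper nonempty prefix of it does, and conversely every such ``prime'' relator lies in $L$ (factor it in $L^\ast$ and use prefix-freeness). Thus $L$ is exactly the language of prime relators, and this language is simple. One soft consequence is immediate: $\WP(G:X;R)=L^\ast$ is context-free (context-free languages are closed under Kleene star), so by the Muller--Schupp characterization (Theorem~\ref{t:MS-characterization context-free}) $G$ is virtually free; equivalently $G$ acts cocompactly on a tree with finite vertex stabilizers.

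\emph{The converse; upgrading to basic.} A finitely generated virtually free group is basic precisely when it admits a cocompact tree action with finite vertex stabilizers \emph{and trivial edge stabilizers}: Bass--Serre theory then presents it as $(\ast_i G_{v_i})\ast F$ with the $G_{v_i}$ finite and $F$ free of finite rank, and the converse is clear. This is genuinely restrictive --- $\Z\times\Z/2\Z$ is virtually free but, being infinite abelian with torsion, is neither free nor a nontrivial free product, hence not basic --- so the content of the converse is to manufacture such an action from $\MM_L$. The plan is to read the tree off the configurations of $\MM_L$: being single-state and deterministic, $\MM_L$ has the property that its stack content is a function of the word read which governs all future behaviour, and the reachable stack contents, ordered by the prefix relation on $Z^\ast$, form a tree-like family on which a run evolves. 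Transporting this to the Cayley graph $\Gamma(G:X;R)$ --- where, since $\WP=L^\ast$, the stack empties exactly at the visits of the path to the $G$-translates of $1_G$, while prefix-freeness of $L$ keeps distinct prime-relator loops from meeting except at those translates --- I would obtain a $G$-equivariant ``tree of blocks and cut vertices'' of $\Gamma$ whose blocks are the maximal $2$-connected pieces: the block stabilizers are finite by the preceding remark, and the cut-vertex/bridge structure between blocks carries no nontrivial group element, so edge stabilizers are trivial. Applying Bass--Serre theory to this action yields the free-product decomposition, so $G$ is basic.

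\emph{Where the difficulty lies.} The forward implication is bookkeeping once the normal-form automaton is set up. All the work is in the converse, specifically in turning the language-theoretic hypothesis ``the prime relators form a simple language'' into an honest tree action with trivial edge stabilizers: one must show that the one-state, deterministic, prefix-free structure of $\MM_L$ rules out the mixing of relators that would force a nontrivial amalgamation (the phenomenon behind virtually free groups that are not basic), and that the block--cut-vertex decomposition it produces is $G$-equivariant with finite block stabilizers. That is the technical core of Haring-Smith's theorem.
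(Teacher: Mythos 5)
The paper itself offers no proof of this theorem: it is a survey that cites Haring-Smith, supplies the forward direction in pieces (Example~\ref{e:mult-table-basic}, Example~\ref{ex:free-cf}, and the remark about the canonical presentation of a basic group), and then states the companion geometric theorem --- only finitely many cycles through each vertex of the Cayley graph --- as the actual bridge between the language condition and basicness. Your forward implication is correct and is exactly this route: the one-state deterministic pushdown automaton that keeps the free-product normal form on its stack is the juxtaposition of the multiplication-table automaton and the free-group automaton, and the Kleene-star conclusion follows from the paper's remark that a one-state deterministic pushdown automaton not required to halt on empty stack accepts $L^*$ for a simple $L$. Your identification of $L$ with the set of prime relators is also correct.

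The converse, however, has a genuine gap, and it sits exactly where you say the technical core is. Your block--cut-vertex tree does carry a $G$-action with trivial edge stabilizers (because $G$ acts freely on the vertices of its Cayley graph, cut vertices and hence tree edges have trivial stabilizers), and Bass--Serre theory would then give the free-product decomposition --- \emph{provided the blocks are finite}, since a block stabilizer acts freely on the block's vertex set and is therefore finite only when that set is. Nothing in your argument establishes finiteness of the blocks, i.e.\ that each vertex lies on only finitely many cycles; this is precisely the content of the paper's second Haring-Smith theorem and it is the one step that must exploit the simple-language hypothesis. Virtual freeness via Theorem~\ref{t:MS-characterization context-free} does not supply it: $\Z\times\Z/2\Z$ is virtually free, yet its Cayley graph for the obvious presentation is a ladder, a single infinite $2$-connected block, so the block--cut tree collapses to a point with infinite stabilizer. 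Moreover the one concrete assertion you make in this direction --- that prefix-freeness of $L$ prevents two distinct prime-relator loops from meeting except where the stack is empty --- is false: in $\Z=\langle x\rangle$ both $xx^{-1}$ and $xxx^{-1}x^{-1}$ are prime relators, and the corresponding closed paths based at a vertex $v$ share the interior vertex $vx$. What is missing is an argument that the deterministic, one-state, halt-on-empty-stack discipline bounds the distance between any two vertices joined by two internally disjoint paths (equivalently, bounds the diameter of a block); without that lemma the construction does not get off the ground.
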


Haring-Smith~\cite{HS} also gave the following geometric characterization of basic groups.

\begin{theorem}[Haring-Smith]
A group $G$ is basic if and only if $G$ has a finitely generated presentation such that in the corresponding Cayley graph  $\Gamma $ the following holds: for every vertex $v \in V(\Gamma)$ there are only finitely many cycles trough $v$.
\end{theorem}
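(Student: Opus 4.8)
The plan is to prove the two directions separately, both through the block structure of the Cayley graph, using the following observation: a \emph{cycle} in the sense of the paper either merely runs along a single edge and back (and there are only finitely many of these through a fixed vertex, since a Cayley graph of a finitely generated group is locally finite) or else it is contained in a single block, i.e.\ a maximal $2$-connected subgraph, of $\Gamma$. Hence the condition ``only finitely many cycles through $v$'' is equivalent to saying that the blocks containing $v$ are finite and finite in number. (One could alternatively try to deduce the statement from the preceding characterization of basic groups by their Word Problem, but the geometry is more direct.)

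\emph{Basic groups satisfy the cycle condition.} Let $G\cong G_1*\cdots*G_k*F_n$ with the $G_i$ finite and $F_n=\langle x_1,\dots,x_n\rangle$ free, and give $G$ the canonical presentation formed by juxtaposing the multiplication-table presentations of the $G_i$ with the free presentation of $F_n$. I would show that the corresponding Cayley graph $\Gamma$ is a tree of finite graphs: for each $g\in G$ and each $i$ the coset $g\langle G_i\rangle$ spans a copy of the finite Cayley graph of $G_i$; every edge labelled by a free generator $x_j$ is a bridge (a detour joining its endpoints would be forced into a coset of a finite factor, or along further free edges, and could return only by revisiting a vertex --- here one uses that no defining relator mixes the factors and that $x_j$ has infinite order); and any two of these finite pieces share at most one vertex. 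Hence every block of $\Gamma$ is finite; since $\Gamma$ is locally finite only finitely many blocks meet $v$; and therefore only finitely many cycles pass through $v$.

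\emph{The cycle condition forces $G$ to be basic.} Conversely, let $\Gamma=\Gamma(G:X;R)$ satisfy the cycle condition. First I would note that \emph{every block of $\Gamma$ is finite}: an infinite, locally finite, $2$-connected graph has a vertex lying on infinitely many cycles, because by $2$-connectedness that vertex lies on a common cycle with each of the infinitely many other vertices of the block, while each cycle visits only finitely many vertices. Now form the block--cut-vertex tree $T$ of $\Gamma$, a genuine tree since $\Gamma$ is connected. The group $G$ acts on $\Gamma$ by left translations, freely and transitively on $V(\Gamma)$; this induces an action on $T$, and since $G$ preserves the bipartition of $T$ into block-vertices and cut-vertices the action is without inversions. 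If $\Gamma$ has no cut vertex then $\Gamma$ is a single finite block and $G$ is finite, hence basic; otherwise every vertex of $\Gamma$ is a cut vertex, $G$ acts on $T$ with finitely many orbits of vertices and of edges, the stabilizer of a cut-vertex-vertex is trivial and that of a block-vertex is finite (it acts freely on the finitely many vertices of the block), and so all edge stabilizers are trivial and all vertex stabilizers finite. By Bass--Serre theory $G$ is then the fundamental group of a finite graph of finite groups with trivial edge groups; such a group is the free product of its (finite) vertex groups with a free group that is finitely generated because $G$ is. Hence $G\cong G_1*\cdots*G_k*F_n$ is basic.

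\emph{The main obstacle.} The substantive work lies entirely in the converse: carrying out rigorously the claim that an infinite block produces infinitely many cycles through some vertex (so that the hypothesis really does yield finite blocks), and then the Bass--Serre bookkeeping --- checking that the induced action on the block--cut-vertex tree is without inversions, identifying the vertex and edge stabilizers, and using finite generation of $G$ (equivalently, Grushko's theorem, to bound the number of nontrivial free factors) to see that the graph of groups may be taken finite. The forward implication is comparatively routine once one has identified the blocks of the Cayley graph of a basic group in its canonical presentation.
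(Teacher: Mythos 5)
The paper does not actually prove this theorem; being a survey, it states the result and cites Haring-Smith~[HS], adding only the remark that the geometric condition holds for a given presentation precisely when the corresponding Word Problem is the Kleene star of a simple language. So there is no in-paper proof to compare against, and your argument has to be judged on its own. On that score it is essentially sound and is the natural modern proof: reduce "finitely many cycles through each vertex" to "all blocks of $\Gamma$ are finite" (using local finiteness and the fact that every non-backtracking cycle lies in a single block, while an infinite $2$-connected locally finite graph puts any fixed vertex on a common cycle with each of infinitely many other vertices, hence on infinitely many cycles); verify the condition for the canonical presentation of a basic group via the normal form theorem for free products (coset graphs of the finite factors are the finite blocks, free-generator edges are bridges); and for the converse let $G$ act on the block--cut-vertex tree and apply Bass--Serre theory with trivial edge stabilizers and finite vertex stabilizers. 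This is quite different in flavour from Haring-Smith's original route through simple languages and pushdown automata, but it is arguably cleaner and it exposes the tree-of-finite-graphs geometry that the paper's Example~\ref{modular} is illustrating.

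Two places deserve slightly more care than you give them. First, the finiteness of the quotient graph $T/G$ (needed so that the graph of groups is finite and the free part has finite rank) rests on the observation that distinct blocks through a vertex are edge-disjoint, so a vertex of the locally finite graph $\Gamma$ lies in only finitely many blocks; you assert finitely many orbits but this is the point to make explicit. Second, in the forward direction the claim that each $x_j$-edge is a bridge and that two coset pieces meet in at most one vertex is exactly the uniqueness of normal forms in $G_1 * \cdots * G_k * F_n$; your parenthetical sketch should be replaced by an appeal to that theorem. Neither issue is a gap in the sense of an idea that fails, but both are where the substance of a complete write-up would live. Finally, note that with the paper's definition of cycle the backtracking paths $(e,e^{-1})$ technically qualify; they contribute only finitely many cycles per vertex by local finiteness, as you say, so they are harmless, but the paper's count of ``exactly two cycles'' in the modular group example suggests they are tacitly excluded.
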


Indeed, the Word Problem for a given presentation is the star of a simple language if and only if its Cayley graph satisfies the above geometric condition.

\begin{example}[The modular group]
\label{modular}
A presentation of the modular group  $G = \PSL(2,\Z)$ $\cong$ $(\Z/2\Z) * (\Z/3\Z)$ is
$G = \langle x,y; x^2, y^3 \rangle$.
The corresponding Cayley graph $\Gamma$ is represented in Figure~\ref{CGC2*C3}.
\begin{figure}[h!]
\begin{center}
\gasset{Nframe=n,AHnb=0,Nadjust=w}
\begin{picture}(0,77)(0,-36)
\node(1)(-7,0){$1$}
\node(x)(7,0){$x$}
\node(y)(-18,10){$y$}
\node(y-)(-18,-10){$y^2$}
\node(xy)(18,10){$xy$}
\node(xy-)(18,-10){$xy^2$}
\node(xyx)(29,20){$xyx$}
\node(xy-x)(29,-20){$xy^2x$}
\node(yx)(-29,20){$yx$}
\node(y-x)(-29,-20){$y^2x$}
\node(xyxy)(40,30){$xyxy$}
\node(xyxy-)(40,10){$xyxy^2$}
\node(xy-xy)(40,-10){$xy^2xy$}
\node(xy-xy-)(40,-30){$xy^2x y^2$}

\node(yxy)(-40,30){$yxy$}
\node(yxy-)(-40,10){$yxy^2$}
\node(y-xy)(-40,-10){$y^2xy$}
\node(y-xy-)(-40,-30){$y^2x y^2$}

{\footnotesize
\gasset{AHnb=1,AHangle=30,AHLength=1,AHlength=0}
\drawedge(1,x){$x$}
\drawedge[ELside=r](1,y){$y$}
\drawedge[ELside=r](y-,1){$y$}
\drawedge(x,xy){$y$}
\drawedge(xy-,x){$y$}
\drawedge[ELside=r](y,y-){$y$}
\drawedge[ELside=r](y,yx){$x$}
\drawedge(xy,xy-){$y$}
\drawedge(xy,xyx){$x$}
\drawedge[ELside=r](y-,y-x){$x$}
\drawedge(xy-,xy-x){$x$}
\drawedge(xyx,xyxy){$y$}
\drawedge(xyxy-,xyx){$y$}
\drawedge(xyxy,xyxy-){$y$}
\drawedge(xy-x,xy-xy){$y$}
\drawedge(xy-xy-,xy-x){$y$}
\drawedge(xy-xy,xy-xy-){$y$}
\drawedge[ELside=r](yx,yxy){$y$}
\drawedge[ELside=r](yxy-,yx){$y$}
\drawedge[ELside=r](yxy,yxy-){$y$}
\drawedge[ELside=r](y-x,y-xy){$y$}
\drawedge[ELside=r](y-xy-,y-x){$y$}
\drawedge[ELside=r](y-xy,y-xy-){$y$}
}
\gasset{Nframe=n,Nadjust=w,Nadjust=h,Nadjustdist=-1,AHnb=0,Nh=0}
\node(2)(46,36){}
\node(3)(46,4){}
\node(4)(46,-4){}
\node(5)(46,-36){}
\node(6)(-46,36){}
\node(7)(-46,4){}
\node(8)(-46,-4){}
\node(9)(-46,-36){}

\drawedge[dash={0.4}0](xyxy,2){}
\drawedge[dash={0.4}0](xyxy-,3){}
\drawedge[dash={0.4}0](xy-xy,4){}
\drawedge[dash={0.4}0](xy-xy-,5){}
\drawedge[dash={0.4}0](yxy,6){}
\drawedge[dash={0.4}0](yxy-,7){}
\drawedge[dash={0.4}0](y-xy,8){}
\drawedge[dash={0.4}0](y-xy-,9){}

\end{picture}
\end{center}
\caption{The Cayley graph of the modular group $\mathbb{Z}/2\mathbb{Z} * \mathbb{Z}/3\mathbb{Z}
 = \langle x, y ; x^2 = y^3 = 1 \rangle$}\label{CGC2*C3}
\end{figure}

As illustrated in Figure~\ref{CGC2*C3}, for every vertex $v \in V(\Gamma)$ there are exactly two cycles through $v$, namely $(e_1,e_2,e_3)$ and $({e_3}^{-1},{e_2}^{-1}, {e_1}^{-1})$, where $e_1 = (v,y,vy)$, $e_2 = (vy,y,vy^2)$, and $e_3 = (vy^2,y,v)$. As usual, for an edge $e$ we denote by $e^{-1}$ the opposite edge (see the drawing convention for symmetric labelled graphs at page~\pageref{drawing convention}).
\end{example}

\begin{example}
\label{bad-Z}
Consider the presentation $\langle x,y;  y = x^2 \rangle$ of the infinite cyclic group.  In the Cayley graph
of this presentation there are infinitely many cycles through a vertex (see Figure~\ref{CGZ'}) and the Word Problem for this
presentation is not the star of a simple language.
\end{example}

\begin{figure}[h!]
\begin{center}
\gasset{Nframe=n,AHnb=0,Nadjust=w}
\begin{picture}(0,15)(0,-5)
\node(1)(0,0){$1$}
\node(x)(13,0){$x$}
\node(xx)(26,0){$x^2$}
\node(xxx)(39,0){$x^3$}
\node(xxxx)(52,0){$x^4$}

\node(x-)(-13,0){$x^{-1}$}
\node(x-x-)(-26,0){$x^{-2}$}
\node(x-x-x-)(-39,0){$x^{-3}$}
\node(x-x-x-x-)(-52,0){$x^{-4}$}

\node(10)(-60,6){}
\node(11)(-60,0){}
\node(12)(60,0){}
\node(13)(60,6){}

\drawedge[dash={0.4}0](11,x-x-x-x-){}
\drawedge[dash={0.4}0](12,xxxx){}
\drawedge[dash={0.4}0,curvedepth=-6](xxx,12){}
\drawedge[dash={0.4}0,curvedepth=6](x-x-x-,11){}
\drawedge[dash={0.4}0,curvedepth=2](xxxx,13){}
\drawedge[dash={0.4}0,curvedepth=-2](x-x-x-x-,10){}

{\footnotesize
\gasset{AHnb=1,AHangle=30,AHLength=1,AHlength=0}
\drawedge(x-,1){$x$}
\drawedge(x-x-,x-){$x$}
\drawedge(x-x-x-,x-x-){$x$}
\drawedge(x-x-x-x-,x-x-x-){$x$}
\drawedge(1,x){$x$}
\drawedge(x,xx){$x$}
\drawedge(xx,xxx){$x$}
\drawedge(xxx,xxxx){$x$}
\drawedge[curvedepth=6](1,xx){$y$}
\drawedge[curvedepth=-6,ELside=r](x,xxx){$y$}
\drawedge[curvedepth=6](xx,xxxx){$y$}
\drawedge[curvedepth=-6,ELside=r](x-,x){$y$}
\drawedge[curvedepth=6](x-x-,1){$y$}
\drawedge[curvedepth=-6,ELside=r](x-x-x-,x-){$y$}
\drawedge[curvedepth=6](x-x-x-x-,x-x-){$y$}
}
\end{picture}
\end{center}
\caption{The Cayley graph of the group $\mathbb{Z} = \langle x, y ; y = x^2\rangle$}\label{CGZ'}
\end{figure}

\section{Finitely generated graphs and ends}\label{sec:FGG}

\subsection{Finitely generated graphs}
We need a framework in which we can  discuss both Cayley graphs of finitely generated groups and complete transition graphs
of pushdown automata.  The following definition is from~\cite{MS2}.

\begin{definition}
A \emph{finitely generated graph} is a rooted labelled graph
$\Gamma = (V,E,\Sigma,v_0)$ with a uniform upper bound on the degrees of vertices, and which
is connected from $v_0$, that is, for every vertex $v \in V$, there is a directed path from $v_0$ to $v$.
\end{definition}

The Cayley graph of a finitely generated group is  clearly a finitely generated graph.  Other examples of finitely generated graphs are provided by the
complete transition graph of pushdown automata that we now define.

\begin{definition}[The complete transition  graph of a pushdown automaton]
Let $\MM = (Q,\Sigma,Z, \delta,$ $q_0,$ $F, z_0)$ be a pushdown automaton.
The \emph{complete transition graph} of $\MM$ is the labelled graph $\Gamma = \Gamma(\MM)$
defined as follows.
The initial vertex is the initial configuration $(q_0,z_0)$.
The vertex set $V$ is the subset of $Q \times Z^*$ consisting of all configurations
$(q,\zeta)$ which are reachable from the initial configuration on reading some possible input $w \in \Sigma^*$. In our previous notation,
$$V =  \{(q,\zeta) : \MM \underset{w}{\overset{*}\vdash} (q, \zeta), w \in \Sigma^* \}.$$
If $v = (q, \zeta)$ and $v' = (q', \zeta')$
are two vertices, then there is an oriented edge labelled by $a \in \Sigma$ from $v$
to $v'$ if and only if $\zeta = \zeta_0z$ with $z \in Z$ such that there exists $(q',\zeta_1) \in \delta(q,a,z)$ satisfying $\zeta' = \zeta_0\zeta_1$.
\end{definition}

Note that $\Gamma(\MM)$ is connected from $v_0$ by definition and that there is an
upper bound on the degrees of vertices.
Thus  the complete transition  graph $\Gamma(\MM)$ of a pushdown automaton $\MM$ is a finitely generated graph.

\begin{example}
Consider the deterministic pushdown automaton $\MM = (Q,\Sigma,Z,$ $\delta,$ $q_0,$ $F, z_0)$
with $Q = F = \{q_0\}$, $\Sigma = Z = \{0,1\}$, $z_0 = \varepsilon$,
and transition function defined by
$$\delta(q_0,a,z) = (q_0,za)$$
for all $a, z \in \{0,1\}$. Then the associated complete transition graph of $\MM$ is isomorphic to the rooted infinite binary  tree $T_2$ (see Figure~\ref{binaryTREE}).
\end{example}

\subsection{Ends of finitely generated graphs}
Let $\Gamma = (V,E, \Sigma,v_0)$ be a finitely generated graph.  Intuitively, an \emph{end} of $\Gamma$ is a way to
``go to infinity'' in $\Gamma$.  Although  a finitely generated graph is a directed graph,  in order to discuss
ends,  we need to consider undirected paths.  Let $\Gamma'$ be the graph obtained by considering $\Gamma$ as an undirected
graph. So if $(u,\sigma, v)$ is an edge of $\Gamma$ then both $(u,v)$ and $(v,u)$ are edges of $\Gamma'$. In short,
one now  ignores labels and  the orientation of  edges.
An  \emph{undirected path} in $\Gamma$ is a sequence of edges $(u_1,v_1),(v_1,v_2),\ldots,(v_i,v_{i+1}), \ldots,(v_n,v_{n+1})$ forming a path in $\Gamma'$.
\par
For a non-negative integer $n$, we denote by $\Gamma_n$ the subgraph of $\Gamma$ whose vertex set $V_n$ consists
of all vertices $v \in V$ such that there exists an  \emph{undirected path}  $\pi $ with $|\pi| \leq n$ from the
origin $v_0$ to $v$ and whose edge set  consists of the edges of $\Gamma$ between two
such vertices.  $\Gamma_n$  is called the \emph{ball} of radius $n$ centered at the basepoint $v_0$ of $\Gamma$.

Let $n$ be a non-negative integer. It follows from the finiteness of the degrees of the
vertices of $\Gamma$ that there are only finitely many connected components of $\Gamma \setminus \Gamma_n$.
 Let us denote them by $\Gamma_{n,1}, \Gamma_{n,2}, \ldots, \Gamma_{n,k(n)}$.
Let $e(n)$ be the number of \emph{infinite} connected components of
$\Gamma \setminus \Gamma_n$. Note that $0 \leq e(n) \leq k(n)$. Moreover, it is easy to see that $e(n)$ is a non-decreasing function of $n$. Thus the following limit exists in $\R \cup \{\infty\}$:
$$
e(\Gamma) = \lim_{n \to \infty} e(n).
$$
It is called the \emph{number of ends} of $\Gamma$.

\begin{example}
\label{e;ends}
\begin{enumerate}[(a)]
\item Let $\Gamma$ be a finite graph and fix an arbitrary vertex $v_0 \in V(\Gamma)$.
For every $n \geq 0$ one has $\Gamma \setminus \Gamma_n$ is finite and, in particular, has
no infinite connected components, that is, $e(n) = 0$. It follows that $e(\Gamma) = 0$.

\item  Let $\Gamma = T_2$ be the rooted infinite binary  tree.
Then for every non-negative integer $n$, the vertex set of the ball of radius $n$ centered at $v_0 = \varepsilon$ consists of  all words
in $\{0,1\}^*$ having  length at most $n$. Each connected component of $\Gamma \setminus \Gamma_n$  has vertex subset $V_w \subset V$
consisting of all words in $\{0,1\}^*$ with proper prefix $w$, where $w \in \{0,1\}^n$ is a word of length $n$. Since there are $2^n$ distinct words of length $n$ over the alphabet $\{0,1\}$, we have $e(n) = 2^n$ for all $n \geq 0$,
so that  $e(\Gamma) = \infty$.

\item  Let $\Gamma$ be the Cayley graph of the infinite cyclic group $\Z = \langle x \rangle$. Then for every non-negative integer $n$ the ball of radius $n$ centered at $v_0 = 1_{\mathbb Z} $ is the ``interval''
from $x^{-n}$ to  $x^{n}$. Thus, $\Gamma \setminus \Gamma_n$ consists of the two disjoint intervals $C_{< n} = \{x^m: m < -n\}$ and
$C_{>n} = \{x^m: m > n\}$. Thus $e(n) = 2$ for all $n \geq 1$ so that
$e(\Gamma) = 2$.

\item  Let $\Gamma$ be the Cayley graph of $\Z^2$ with respect to the presentation $\Z^2 = \langle x,y ; [x,y]\rangle$.
Then for every non-negative integer $n$ the ball of radius $n$ centered at the origin  is the ``square''
$\Gamma_n = \{x^py^q: |p|+|q| \leq n\}$. Thus, $\Gamma \setminus \Gamma_n$
consists of a single connected component, namely $C_{>n} = \{x^py^q: |p|+|q| > n\}$. Hence $e(n) = 1$ for all $n \geq 0$ so that $e(\Gamma) = 1$.
\end{enumerate}
\end{example}

\begin{remark}
If $G$ is a finitely generated  group then the number of ends of the Cayley graph of any finitely generated presentation of $G$ is the same.
Thus $e(G)$, the number of ends of $G$, is well defined and does not depend on the presentation. It is a fact that the number of ends
of any  finitely generated group is either $0,1,2$, or $\infty$.
We also remark that if $e(G) = \infty$ then $G$ contains nonabelian free groups
(see, e.g. \cite{karlsson, karlsson2, moon, woess1}).
\end{remark}

A very powerful result of  Stallings~\cite{stallings} is the Stallings Structure Theorem.

\begin{theorem}[Stallings]
Let $G$ be a  finitely generated group. Then $e(G) > 1$ if and only if one of the following
holds:
\begin{itemize}
\item $G$ admits a splitting $G = H*_C K$ as a free product with amalgamation, where $C$
is a finite proper subgroup of both $H$ and $K$;
\item $G$ admits a splitting  $G = \langle H,t; tC_1t^{-1} = C_2 \rangle$  as an HNN-extension, where $C_1$ and $C_2$ are
isomorphic finite subgroups of $H$.
\end{itemize}
\end{theorem}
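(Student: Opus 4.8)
The plan is to prove the two implications by very different means. The ``if'' direction is essentially formal, combining Bass--Serre theory with elementary properties of ends; the ``only if'' direction is the substantial part and requires constructing a $G$-tree out of an almost invariant subset of $G$, after which Bass--Serre theory is applied in reverse.

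For the ``if'' direction, suppose $G = H *_C K$ with $C$ finite and properly contained in both $H$ and $K$, or $G = \langle H,t; tC_1t^{-1} = C_2\rangle$ with $C_1, C_2$ finite. By the normal form theorems for amalgams and HNN-extensions (Britton's Lemma) the group $G$ is infinite, and Bass--Serre theory provides an action of $G$ on a tree $T$ with finite edge stabilizers (the conjugates of $C$, respectively of $C_1$) and with no globally fixed vertex: a globally fixed vertex would force $G$ to equal $H$ or $K$ (respectively $H$), contrary to the hypotheses. It is then an elementary fact --- the easy half of Stallings' theorem --- that such a group has more than one end. Concretely, fix a vertex $v_0$ and an edge $e$ of $T$, let $T_1$ be one of the two half-trees of $T\setminus\{e\}$, and put $A = \{g \in G : g\cdot v_0 \in T_1\}$; because the generating set and the edge stabilizers of $T$ are finite, only finitely many edges $(g,gx)$ of a Cayley graph $\Gamma = \Gamma(G:X;R)$ can join a vertex mapped into $T_1$ to a vertex mapped into $T_2$ (the $T$-geodesic from $g\cdot v_0$ to $gx\cdot v_0$ would have to cross $e$, pinning $g$ down to finitely many possibilities for each of the finitely many $x$), so $A$ has finite edge-coboundary in $\Gamma$. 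A short argument --- using a hyperbolic element of the $T$-action, or handling the finitely many degenerate two-ended cases directly --- shows $A$ and $G\setminus A$ are both infinite. Thus $\Gamma$ is split into at least two infinite pieces by a finite edge set, i.e. $e(G) = e(\Gamma) \geq 2$.

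For the ``only if'' direction, assume $e(G) > 1$ and work in a locally finite Cayley graph $\Gamma = \Gamma(G:X;R)$. By definition there is a finite edge set whose removal leaves at least two infinite components; taking $A \subseteq G$ to be the vertex set of one of them, the sets $A$ and $A^c = G\setminus A$ are both infinite and, since the coboundary of $A$ is finite, $A\setminus gA$ and $gA\setminus A$ are finite for all $g \in G$ --- that is, $A$ is an \emph{almost invariant set}. The heart of the proof --- Stallings' original argument via ``bipolar structures'', recast by Dunwoody in terms of cuts and structure trees --- is to replace $A$ by a \emph{nested} almost invariant set, meaning one for which, for every $g \in G$, at least one of $gA \subseteq A$, $gA \subseteq A^c$, $A \subseteq gA$, $A^c \subseteq gA$ holds up to a finite set. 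Granting this, the family of translates $\{gA : g \in G\}\cup\{gA^c : g \in G\}$, partially ordered by inclusion modulo finite sets, becomes the set of half-trees of a tree $T$ carrying an action of $G$ without inversions whose quotient graph is a single edge or a single loop. Bass--Serre theory now presents $G$ as $H *_C K$ or as $\langle H,t; tC_1t^{-1}=C_2\rangle$, where $C$ is the $G$-stabilizer of the edge corresponding to $A$. This $C$ permutes the finite set of vertices incident to the (finite) coboundary of $A$, and since a nontrivial left translation of $\Gamma$ fixes no vertex, $C$ embeds into a finite symmetric group and hence is finite; and $C$ is proper in the vertex groups (and in both factors, in the amalgam case) because $A$ and $A^c$ are both infinite, so the structure tree is genuinely a tree and the action has no globally fixed vertex.

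The main obstacle is the nesting step. To obtain a nested almost invariant set one selects, among all almost invariant subsets of $G$ whose coboundary has the minimal possible number of edges, one that is minimal in a suitable additional sense, and then shows by a combinatorial exchange argument --- comparing the coboundaries of $A$, of $gA$, and of their pairwise intersections and unions --- that two translates of such a coboundary-minimal set cannot cross. Establishing this, and then passing from a $G$-invariant nested family of cuts to an actual $G$-tree, is precisely the content of Dunwoody's ``cutting up graphs'' method (and, in Stallings' formulation, of verifying the axioms of a bipolar structure); this is where essentially all the work lies, the Bass--Serre bookkeeping at either end being routine.
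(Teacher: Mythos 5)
The paper does not prove this theorem: it is quoted from Stallings \cite{stallings} and used as a black box, so there is no in-paper argument to compare yours against, and I can only assess the proposal on its own terms. Your roadmap is the standard one (Stallings' bipolar structures in the reformulation via cuts and structure trees): the ``if'' direction, through a $G$-action on a Bass--Serre tree with finite edge stabilizers and the finiteness of the edge-coboundary of $A=\{g\in G: g\cdot v_0\in T_1\}$, is correct and essentially complete; and the ``only if'' direction correctly identifies the three essential steps (an almost invariant set extracted from the end structure, nesting, the structure tree) together with the right argument for finiteness of the edge group, namely its faithful permutation action on the endpoints of the finite coboundary, using that nontrivial left translations of a Cayley graph are fixed-point free.

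Two criticisms. First, a genuine error of formulation: with the paper's Cayley graph convention (edges $(g,x,gx)$, so that \emph{left} translations are the graph automorphisms), finiteness of the coboundary of $A$ is equivalent to $A\triangle Ag$ being finite for all $g\in G$ (right translates), \emph{not} to $gA\setminus A$ and $A\setminus gA$ being finite as you assert. For $G=F_2$ and $A$ the set of reduced words beginning with $x$, the coboundary of $A$ is finite while $A\triangle xA$ is infinite. Since your nesting step compares the left translates $gA$ with $A$, and these are genuinely different sets from the right translates (only the coboundaries $\delta(gA)=g\,\delta A$, not the sets themselves, are controlled a priori), the distinction is not cosmetic and must be kept straight throughout the construction. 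Second, and more importantly, the proposal defers the entire technical content of the theorem --- the exchange/submodularity argument showing that two translates of a coboundary-minimal cut cannot cross, and the passage from a $G$-invariant nested family of cuts to an actual $G$-tree with the claimed quotient --- to Dunwoody's method without carrying out any of it. As you acknowledge, that is where essentially all the work lies; so what you have is a correct and well-informed plan of attack, not a proof.
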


The proof of the characterization of context-free groups as
finitely generated virtually free groups depends heavily on the Stallings Structure Theorem. A consequence of the geometric
characterization of context-free groups is that every finitely generated subgroup of a context-free group is either finite
or has more than one end. This opens the way to a proof by induction but needs the notion of accessibility.
A finitely generated group is \emph{accessible} if the process of taking repeated splittings as in Stallings' theorem must halt
after a finite number of steps.  That is, one splits $G$ as $H *_C K$ or as an HNN-extension $\langle H,t: tC_1t^{-1} = C_2\rangle$
according to the theorem and then splits $H$ and $K$ or just $H$ in the HNN case, etc.  Accessibility of context-free groups
is needed  to complete the characterization of context-free groups as virtually-free groups. (See Theorem~\ref{t:MS-characterization context-free}.)

Senizergues~\cite{senizergues2} proved the following result.

\begin{theorem}[Senizergues]
If $G$ is a context-free group then there are only finitely many conjugacy classes of finite subgroups of $G$.
\end{theorem}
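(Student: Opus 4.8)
\medskip
\noindent\textbf{Proof strategy.}
The plan is to combine the Muller--Schupp characterization with Bass--Serre theory, so that the only place where context-freeness enters is at the very beginning. First I would invoke Theorem~\ref{t:MS-characterization context-free} to replace the hypothesis ``$G$ is context-free'' by the equivalent statement ``$G$ is finitely generated and virtually free''. Everything that follows is then pure (geometric) group theory; the point is that a finitely generated virtually free group has a very rigid splitting structure, and every finite subgroup is forced to sit inside one of finitely many finite pieces of that structure.

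Next I would use the accessibility of finitely generated virtually free groups, together with the Stallings Structure Theorem, to exhibit $G$ as acting without inversions on a tree $T$ with \emph{finite} vertex stabilizers and \emph{finite} quotient graph $G\backslash T$. Concretely: if $G$ has more than one end, Stallings' theorem splits it as $H *_C K$ or $\langle H,t; tC_1t^{-1}=C_2\rangle$ with $C$ (resp. $C_1,C_2$) finite; one then splits the vertex groups in the same way, and accessibility guarantees that the process terminates after finitely many steps, producing a finite graph-of-groups decomposition of $G$ with finite edge groups. A terminal vertex group $V$ admits no further splitting over a finite subgroup, hence by Stallings' theorem has at most one end; but $V$ is a subgroup of $G$, hence itself virtually free, so if $V$ were infinite it would contain a free subgroup of finite index of rank $\ge 1$ and therefore have $2$ or $\infty$ ends, a contradiction. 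Thus every terminal vertex group is finite, and $T$ can be taken to be the Bass--Serre tree of this decomposition. (Alternatively, one may simply cite the Karrass--Pietrowski--Solitar description of finitely generated virtually free groups as fundamental groups of finite graphs of finite groups, or run the argument below directly by induction on the accessible decomposition, using that a finite subgroup of an amalgam $H*_C K$ or an HNN extension over a finite subgroup is conjugate into a factor.)

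The conclusion is then immediate from standard Bass--Serre theory. Let $H\le G$ be a finite subgroup. A finite group acting without inversions on a tree fixes a vertex, so $H$ is contained in the stabilizer $G_v$ of some $v\in V(T)$. Since $G\backslash T$ is finite there are only finitely many $G$-orbits of vertices, hence only finitely many conjugacy classes of vertex stabilizers, say $G_{v_1},\dots,G_{v_k}$, and each $G_{v_i}$ is finite. Therefore every finite subgroup of $G$ is conjugate (in $G$) into one of $G_{v_1},\dots,G_{v_k}$, and since a finite group has only finitely many subgroups, $G$ has only finitely many conjugacy classes of finite subgroups.

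I expect the main obstacle to be the middle step: one must be careful that accessibility delivers a decomposition in which not only the edge groups but also the terminal vertex groups are finite, which is exactly where the observation ``$G$ virtually free and infinite $\Rightarrow$ more than one end'' is needed to exclude a one-ended unsplittable vertex group. Once this graph-of-finite-groups structure is in hand, the fixed-point argument and the counting in the last paragraph are routine.
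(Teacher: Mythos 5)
The paper itself offers no proof of this statement: it is quoted from Senizergues, whose argument works directly with the context-free structure of the Word Problem (the point of his paper being an \emph{effective} version of the result). Taken purely as mathematics, your derivation is correct: a finitely generated virtually free group is the fundamental group of a finite graph of finite groups, a finite subgroup acting on the Bass--Serre tree fixes a vertex and is hence conjugate into one of finitely many finite vertex stabilizers, and each of these has only finitely many subgroups. Your attention to the terminal vertex groups (an infinite finitely generated virtually free group has $2$ or $\infty$ ends, never $1$, so an unsplittable vertex group must be finite) is exactly the right point to secure.

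The genuine problem is one of logical order rather than of any individual step. In this paper's development, Senizergues' theorem is not a corollary of the Muller--Schupp characterization but an \emph{input} to it: combined with Linnell's theorem it yields accessibility of context-free groups, and the paper states explicitly that accessibility ``is needed to complete the characterization of context-free groups as virtually-free groups'' (Theorem~\ref{t:MS-characterization context-free}). Deducing Senizergues' theorem from Theorem~\ref{t:MS-characterization context-free} therefore runs the implication backwards and is circular unless accessibility is first secured by an independent route. Such a route does exist and is mentioned in the paper --- context-free groups are finitely presentable (Anisimov--Seifert) and all finitely presentable groups are accessible (Dunwoody) --- so your argument can be repaired by citing that chain explicitly before invoking Muller--Schupp. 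As written, however, it cannot serve the role the theorem plays here, and it also discards the effectivity that Senizergues' direct proof extracts from a pushdown automaton for the Word Problem.
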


Linnell~\cite{linnel} proved that any finitely generated group with only finitely many conjugacy classes of finite subgroups
is accessible. In conjunction with Senizergues' theorem this shows  that any context-free group is accessible.
 Dunwoody~\cite{dunwoody1} later proved that \emph{all} finitely presentable groups are accessible.  Recall that Anisimov and Seifert
proved that context-free groups are finitely presentable, (See the comments after Theorem~\ref{t:anisimov-seifert}.
Note that there exist finitely generated groups that are not accessible
(see~\cite{dunwoody2}).

\subsection {Graphs with finitary end structure}
We have seen that $e(\Z^2) = 1$ while $e(T_2) = \infty$.
Later, in the section on monadic logic, we shall see that there is a precise sense in which, from the point of view of logical complexity,
the Cayley graph of $\mathbb Z^2$ is infinitely more complicated than the rooted infinite binary  tree $T_2$.  So the \emph{number} of ends
is not a good measure of logical complexity but it turns out that we can still use ends to measure  complexity.

\begin{definition} Let $\Gamma$ be a finitely generated graph.
Denote by $c(\Gamma)$ the number of \emph{end-isomorphism classes} of connected components of $\Gamma \setminus \Gamma_n$ over \emph{all} components and all $n \geq 1$.   An \emph{end-isomorphism} between connected  components $C$ of $\Gamma \setminus \Gamma_n$
and $C'$ of $\Gamma \setminus \Gamma_{n'}$ is a labelled graph isomorphism which additionally maps the points of $\Gamma_n$ at distance $n$
from $v_0$ to  the points of $\Gamma_{n'}$ at distance $n'$ from $v_0$
(thus respecting the end structure).  Note that although we
undirected the graph to define the connected components, we are using the directed structure of $\Gamma$ to define end-isomorphisms.
\end{definition}

\begin{example} (compare with Example~\ref{e;ends}).
\begin{enumerate}[(a)]
\item Let $\Gamma$ be a finite graph. The number of all connected components of $\Gamma \setminus \Gamma_n$, $n \geq 1$, equals the number of all connected components of
$\Gamma \setminus \Gamma_1, \Gamma \setminus \Gamma_2, \ldots, \Gamma \setminus \Gamma_{d-1}$, where $d = \max\{\dist(v,v_0): v \in V(\Gamma)\}$, and is therefore finite.
It follows  that $c(\Gamma) < \infty$.

\item Let $\Gamma = T_2$ be the rooted infinite binary tree, say with label $0$ on left successor edges and label $1$ on right successor edges.
Then for every $n \in \N$ and every component $C$ of $\Gamma \setminus \Gamma_n$ the graph  $C$ is a rooted infinite binary  tree
isomorphic to $\Gamma$.  Thus  $c(\Gamma) = 1$.

\item Let $\Gamma$ be the Cayley graph of $\Z$ with respect to the standard presentation.
Recall that $\Gamma$ is the infinite line (see Figure~\ref{CGZ})
with a directed edge labelled by $x$ from vertex $x^n$ to vertex $x^{n+1} $ for all $n \in \mathbb Z$. If we remove a ball
$\Gamma_r, r \ge 1$, then there are always two components. Call these components the ``left'' component and the ``right `` component.
These two components  are not isomorphic as labelled graphs  since edges with label $x$ go from  vertex $x^n$ to vertex $x^{n+1}$.
However, all right components are isomorphic to each other and all left components are isomorphic to each other.  Thus $c(\Gamma) = 2$.

\item Let $\Gamma$ be the Cayley graph of $\Z^2$ with  presentation $\langle x,y; [x,y]\rangle$ (see Figure~\ref{CGZ2}).
Then, for every non-negative integer $n$ the ball of radius $n$ centered at the identity  is
the ``square''  $\Gamma_n = \{x^p y^{q}: |p|+|q| \leq n\}$. It is clear that
the graphs $\Gamma \setminus \Gamma_n$ are pairwise non-isomorphic  (look at the finite
boundaries!) so that $c(\Gamma) = \infty$.
\end{enumerate}
\end{example}

\begin{definition} A   finitely generated graph $\Gamma$ has \emph{finitary end-structure} if   $c(\Gamma) < \infty$.
A finitely generated graph is \emph{context-free}
if there exists a pushdown automaton $\MM$ such that $\Gamma$ is label-isomorphic to $\Gamma(\MM)$.
\end{definition}

It turns out that there is a characterization of finitely generated graphs with finitary end-structure.

\begin{theorem}[Muller-Schupp]
Let $\Gamma$ be a finitely generated graph. Then $\Gamma$ has finitary end-structure if and only if
$\Gamma$ is context-free.
\end{theorem}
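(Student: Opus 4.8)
The plan is to prove the two implications separately; the implication ``$\Gamma$ context-free $\Rightarrow$ finitary end-structure'' is soft, while the converse carries all the real work (the finite-graph case being trivial, since a finite $\Gamma$ is the transition graph of a stack-free pushdown automaton). For the easy direction, write $\Gamma=\Gamma(\MM)$ with $\MM=(Q,\Sigma,Z,\delta,q_0,F,z_0)$ and let $B$ bound the length of every word $\zeta_1$ occurring in a transition $(q',\zeta_1)\in\delta(q,a,z)$. The point is the \emph{pushdown principle}: the behaviour of $\MM$ from a configuration $(q,\zeta_0z)$, with $z$ the top symbol, depends only on $q$ and $z$, never on the frozen part $\zeta_0$. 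Concretely, the subgraph of $\Gamma$ induced on the configurations reachable from $(q,\zeta_0z)$ by paths that never shorten the stack below length $|\zeta_0|+1$ --- call it the \emph{cone} $\mathrm{Cone}(q,z)$ --- is, as a rooted labelled graph, independent of $\zeta_0$, so there are at most $|Q|\cdot|Z|$ of them. First I would check that if $C$ is a connected component of $\Gamma\setminus\Gamma_n$ and $v=(q,\zeta_0z)$ has minimal stack length among the vertices of $C$, then $C$ is obtained, up to end-isomorphism, from $\mathrm{Cone}(q,z)$ by deleting a ``collar'' lying inside $\Gamma_n$; since the stack length changes by at most $B$ along an edge, only finitely many such collars are compatible with the end-structure of $C$. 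Hence $c(\Gamma)<\infty$.

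For the converse, assume $c(\Gamma)<\infty$ and organise the ends of $\Gamma$ into the \emph{tree of components} $\mathcal T$: its nodes are the connected components of the graphs $\Gamma\setminus\Gamma_n$ for all $n\ge 0$, and a component $C'$ of $\Gamma\setminus\Gamma_{n+1}$ is a child of the component $C$ of $\Gamma\setminus\Gamma_n$ with $C'\subseteq C$. Bounded degree of $\Gamma$ forces bounded branching of $\mathcal T$. The crucial observation is that $\mathcal T$ is \emph{regular}: an end-isomorphism between two components matches up their distance-$n$ boundary vertices, hence also matches up their decompositions at the next level, so the hypothesis $c(\Gamma)<\infty$ forces the pair (isomorphism type of a node, glueing data to its children, including finite ``dead-end'' children) to range over a finite set. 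From this I would extract a \emph{regular subtree $T$ of finite index} in $\Gamma$ in the sense recalled in the introduction: inside each of the finitely many component types choose, once and for all, a distinguished spanning path (or tree), consistently across isomorphic types; their union is a subtree $T$ definable by a finite automaton, with a uniform bound $D$ so that every vertex of $\Gamma$ lies within distance $D$ of $T$. Finally I would build a pushdown automaton $\MM$ with input alphabet $\Sigma$ whose stack records the current branch of $T$ --- one stack symbol per node, encoding its type together with the (bounded) index of the child taken --- and whose finite control records the deviation, of length at most $D$, of the current $\Gamma$-vertex off $T$ together with the local adjacency pattern of $\Gamma$ around it. Reading a letter $a\in\Sigma$ is interpreted as crossing the $a$-labelled edge of $\Gamma$ out of the current vertex, which moves us one step towards a child of $T$ (a push), back towards the parent (a pop), or within the current $D$-neighbourhood (a stack-neutral move). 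One then checks that the configurations reachable in $\MM$ are in label-preserving bijection with the vertices of $\Gamma$, so $\Gamma$ is label-isomorphic to $\Gamma(\MM)$.

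The hard part will be the middle step of the converse: turning ``finitely many end-isomorphism types of components'' into a genuinely regular (finite-automaton presented) tree $T$ with a \emph{uniform} bound $D$, and then coordinatising the data precisely enough that the resulting $\MM$ has transition graph label-isomorphic to $\Gamma$ on the nose --- not merely quasi-isometric to it. This demands careful bookkeeping of how a component attaches both to its bounding sphere and to its sub-components, including the degenerate cases of finite components and of vertices that lie on the boundary of several components simultaneously; the distinguished spanning structure inside each component type must be fixed uniformly so that isomorphic components receive isomorphic structures, which is exactly what makes a finite stack alphabet and a finite control suffice.
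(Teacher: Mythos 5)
The paper you are being compared against does not actually prove this theorem: it is a survey, the statement is quoted from Muller--Schupp \cite{MS2}, and the only commentary offered is that the direction ``context-free $\Rightarrow$ finitary end-structure'' is the easy one, that the converse is hard, and that the proof of the converse shows $\Gamma$ contains a rational subtree of finite index. Your overall architecture (a frozen-stack-prefix argument for the easy direction; the tree of components and its self-similarity for the hard one) does match the known proof, but both halves of your sketch have concrete gaps.

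In the easy direction, the assertion that a component $C$ of $\Gamma\setminus\Gamma_n$ is ``$\mathrm{Cone}(q,z)$ minus a collar'' for a single vertex $v=(q,\zeta_0z)$ of minimal stack height is false as stated: $C$ is an \emph{undirected} component, so it contains predecessors of its vertices as well as successors; it may contain several vertices of minimal stack height with different states and different top symbols; and it contains configurations reachable from $v$ only by first popping down to level $\vert\zeta_0\vert+1$ and rewriting $z$, which your cone excludes. What the pushdown principle really gives is that all vertices of $C$ share the bottom $m-1$ stack symbols, where $m$ is the minimal stack height occurring in $C$, so that substituting one frozen prefix for another is a \emph{candidate} end-isomorphism. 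The actual content of this direction is then to show that the residual data --- the set of pairs (state, stack above the frozen prefix) occurring at the frontier vertices of $C$, those at distance exactly $n+1$ from $v_0$ --- is uniformly bounded independently of $n$ and of $C$; that boundedness is nowhere addressed in your sketch, and without it ``only finitely many collars'' does not follow. In the hard direction you have correctly located and flagged the crux, but two further points: (i) you must actually prove that an end-isomorphism $C\to C'$ carries the decomposition of $C$ into components of $\Gamma\setminus\Gamma_{n+1}$ onto the corresponding decomposition of $C'$ (this holds because end-isomorphisms respect the frontier and hence distances to $v_0$ measured through the component, and it is precisely what makes the tree of components self-similar); and (ii) routing the construction through a regular subtree of finite index inverts the usual logic: in \cite{MS2} the pushdown automaton is built directly from the finite self-similar system of component types (the stack records the nested sequence of components containing the current vertex, the finite control its position relative to the innermost frontier), and the rational subtree of finite index is a consequence, used afterwards for the monadic-logic application. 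As you have set it up, the uniform bound $D$ and the finite-automaton presentation of $T$ are unproved inputs that are not easier to obtain than the automaton itself.
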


The necessary condition of the theorem is the ``easy part'' while the sufficient condition
is  ``hard''.  An analysis of the proof shows that finitely generated graphs
$\Gamma$ with $c(\Gamma) < \infty$ are ``very treelike'' (see also \cite{woess3}).
Indeed, $\Gamma$ contains a rational subtree of finite index in the sense that there is a subtree $T$ of $\Gamma$ defined by a finite automaton
such that every vertex of $\Gamma$ is within a fixed distance from some vertex of $T$.  Putting the characterization of graphs
$\Gamma$ with $c(\Gamma) < \infty$ together with the characterization of context-free groups we have the following result.

\begin{corollary}
Let $G$ be a finitely generated group and let $\Gamma$ be the Cayley graph of any finitely generated presentation of $G$.
Then $c(\Gamma) < \infty$ if and only if $G$ is virtually free.
\end{corollary}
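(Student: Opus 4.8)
The plan is to combine two results already in hand: the Muller--Schupp characterization of finitely generated graphs with finitary end-structure as exactly the context-free graphs (those label-isomorphic to the complete transition graph of a pushdown automaton), and the Muller--Schupp characterization of context-free groups as exactly the virtually free groups (Theorem~\ref{t:MS-characterization context-free}). The missing link is the translation between ``$\WP(G:X;R)$ is context-free'' and ``the Cayley graph $\Gamma = \Gamma(G:X;R)$ is a context-free graph'', and the argument splits along the two implications of the corollary.

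For the implication ``$c(\Gamma) < \infty$ $\Rightarrow$ $G$ virtually free'', suppose $c(\Gamma) < \infty$. By the graph characterization there is a pushdown automaton $\MM = (Q,\Sigma,Z,\delta,q_0,F,z_0)$ and a label-isomorphism $\varphi \colon \Gamma \to \Gamma(\MM)$ sending the root $v_0 = 1_G$ to the initial configuration $(q_0,z_0)$. Since $\Gamma$ is deterministic, for each $w \in \Sigma^*$ there is a unique path in $\Gamma$ from $v_0$ labelled $w$, and by the characterization of the Word Problem via closed paths, $w \in \WP(G:X;R)$ precisely when this path is closed; transporting along $\varphi$, this happens precisely when the $w$-labelled path in $\Gamma(\MM)$ from $(q_0,z_0)$ returns to $(q_0,z_0)$. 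It is then routine to build from $\MM$ a pushdown automaton $\MM'$ that marks the bottom of its stack, makes exactly the moves corresponding to the edges of $\Gamma(\MM)$, and accepts $w$ iff, after reading all of $w$, it is back in the configuration $(q_0,z_0)$; so $L(\MM') = \WP(G:X;R)$, which is context-free by Theorem~\ref{t:characterization--free}. Hence $G$ is a context-free group, and by Theorem~\ref{t:MS-characterization context-free} it is virtually free.

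For the converse, let $G$ be virtually free. By Corollary~\ref{c:vf-cf} together with Observation~\ref{r:invariance}, $\WP(G:X;R)$ is context-free for every finitely generated presentation. The remaining and main point is to upgrade this to: $\Gamma(G:X;R)$ is itself a context-free graph. This does not follow formally from Theorem~\ref{t:characterization--free}, which only yields a pushdown automaton \emph{accepting} the language, whose transition graph need not resemble $\Gamma$; instead one must exhibit a pushdown automaton whose complete transition graph is label-isomorphic to the Cayley graph. For $G = F_X$ the one-state automaton of Example~\ref{ex:free-cf} already works: its reachable configurations are precisely the reduced words over $\Sigma$ and its transitions append or cancel a final letter, so its complete transition graph is the Cayley tree of $F_X$. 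For the general virtually free case I would bootstrap this through Proposition~\ref{p:finite-index-cf}: pass to a finite-index normal free subgroup $H$ and to the presentation of $G$ constructed there, and verify that the automaton $\widehat{\MM}$ of that proof --- which keeps the coset of $H$ in its finite control while the stack runs the free-group automaton for $H$ --- has complete transition graph label-isomorphic to $\Gamma(G:X;R)$; a general presentation is reduced to this one by carrying a bounded amount of extra bookkeeping in the finite control. Once $\Gamma$ is known to be a context-free graph, the graph characterization gives $c(\Gamma) < \infty$, and in particular the finiteness of $c(\Gamma)$ does not depend on the presentation.

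I expect this last step --- realizing the Cayley graph of a virtually free group, for an arbitrary finite generating set, as the complete transition graph of a pushdown automaton (equivalently, checking directly that such a Cayley graph has only finitely many end-isomorphism types of complement components) --- to be the real obstacle, since it is where the structure theory of virtually free groups and the nontrivial sufficiency direction of the Muller--Schupp graph theorem do their work; the forward implication, by contrast, is an easy manipulation of automata.
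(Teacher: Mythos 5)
Your proposal is correct and takes essentially the same route as the paper, which offers no proof beyond the remark that the corollary follows by ``putting together'' the Muller--Schupp characterization of graphs with finitary end-structure as complete transition graphs of pushdown automata and the Muller--Schupp characterization of context-free groups as virtually free groups. The bridging step you rightly isolate --- that the Cayley graph is a context-free graph if and only if the Word Problem is a context-free language --- is exactly the translation the paper leaves implicit, and your sketches of the two directions of that translation are the standard ones.
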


\section{Second-order monadic logic, the Domino Problem, and decidability}
\label{s:SOML}
\subsection{Second-order monadic logic and the theorems of B\"uchi and Rabin}

The reader is probably familiar with \emph{first-order logic} in which  the quantifiers
$\exists$ (there exists) and $\forall$ (for all) range only over individual elements of a given structure. The first-order language
for a structure includes the quantifiers, variables $x,y,z,\ldots$ for individual elements and the Boolean connectives  $\neg$ (negation),
$\lor$ (or), and $\land$ (and). There are  function and relation symbols for the operations  and relations of the structure,
including the relation of equality.
For more on first-order logic see  the monograph  by Enderton~\cite{enderton}.

\begin{example}[Group axioms]
 The usual  axioms which define a group are expressible in first-order logic.
A quadruple $\langle G,\ast, ^{-1} ,1_G \rangle $, where $G$ is a set with  a binary function symbol  $\ast$,  a unary function symbol  $^{-1}$,
and a 0-ary  constant symbol $1_G$,  defines a group provided that:
\begin{itemize}
\item  $\forall x \forall y \forall z [(x \ast y ) \ast z  = x \ast ( y \ast z)]$ (associative property);
\item  $\forall x [x \ast 1_G = 1_G \ast x = x]$ (existence of an identity element);
\item $\forall x [x \ast x^{-1} = x^{-1} \ast x = 1_G]$ (existence of inverse elements).
\end{itemize}
\end{example}

In \emph{monadic second-order} logic, one also has variables and quantifiers ranging over arbitrary subsets of the structure.
The term ``monadic'' refers to the fact that we can quantify only over subsets of the given structure, and not over relations.
Second-order logic with  variables for arbitrary relations is sometimes called \emph{full} second-order logic to distinguish
it from the monadic version.

\begin{example}[Peano axioms]
 Consider  the language of  \emph{second-order Peano axioms} for \emph{arithmetic} in which we have a unary function symbol $s$
for the successor function, a constant symbol $0$, the set membership symbol $\in$, the relation $\subseteq$ of set inclusion,
and equality relation for both individual and set variables.   The axioms are:

\begin{itemize}
\item $ \forall x \lnot [s(x) = 0 ]$
\item $ \forall y \exists x [ y \ne 0 \Rightarrow   y = s(x) ]$
\item $ \forall x \forall y [s(x) = s(y) \Rightarrow  x=y]$
\item $ \forall X [[0 \in  X  \land  \forall x(x \in X \Rightarrow s(x) \in X)] \Rightarrow  \forall y [y \in X]]$ \ (mathematical induction).
\end{itemize}

In standard second-order logic, these axioms define $\mathbb N$ with the successor function up to isomorphism.
This theory is sometimes denoted by $S1S$, the \emph{theory of one successor function}.
\end{example}

B\"uchi~\cite{buchi} introduced the theory of finite automata on infinite inputs to prove
the following result.

\begin{theorem}[B\"uchi]
\label{t:buchi}
The monadic second-order theory $S1S$ is decidable.
\end{theorem}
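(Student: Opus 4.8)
The plan is to prove that $S1S$ is decidable by effectively translating monadic second-order sentences about $(\N,s)$ into finite automata on infinite words, the method B\"uchi introduced for precisely this purpose. First I would fix an encoding. A formula $\phi(X_1,\dots,X_k)$ with free set variables is interpreted in $(\N,s)$, and an assignment $X_i\mapsto A_i\subseteq\N$ is coded by the infinite word $w_A\in(\{0,1\}^k)^\N$ whose $n$-th letter is the characteristic vector $(\mathbf{1}_{A_1}(n),\dots,\mathbf{1}_{A_k}(n))$. Individual variables are treated as set variables constrained to be singletons (``being a singleton'' is monadically definable), and equality of both kinds of variables is definable as well, so it is enough to work with monadic formulas in the signature $\{\subseteq,\,s\}$ together with membership. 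Under this coding the set of satisfying assignments of $\phi$ becomes a language $L_\phi\subseteq(\{0,1\}^k)^\N$, and the whole argument reduces to showing, by induction on the structure of $\phi$, that $L_\phi$ is $\omega$-regular, that is, accepted by a nondeterministic finite automaton on infinite words (a B\"uchi automaton), and that such an automaton can be built effectively from $\phi$.

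For the base cases, each atomic formula --- membership, inclusion $X\subseteq Y$, and the successor relation rephrased between singletons --- defines an $\omega$-regular language recognised by a small automaton that merely inspects the current letter (and, in the successor case, remembers the previous one). For the inductive step I would use the closure properties of $\omega$-regular languages: conjunction and disjunction correspond to intersection and union, handled by a product construction; existential quantification $\exists X_i\,\phi$ corresponds to erasing the $i$-th component of the alphabet, which a nondeterministic automaton carries out by guessing the erased bits; and negation corresponds to complementation. The first three are routine. The genuine obstacle, and the heart of B\"uchi's theorem, is closure under complementation, since B\"uchi automata are essentially nondeterministic --- deterministic B\"uchi automata accept a strictly smaller class --- so one cannot complement simply by determinising.

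To settle complementation I would run B\"uchi's Ramsey-theoretic argument. Given a B\"uchi automaton $\A$ with state set $Q$, attach to each finite word $v$ its \emph{transition profile}, the function recording for every pair $(p,q)\in Q\times Q$ whether $\A$ has a run from $p$ to $q$ on $v$ and whether it has such a run passing through an accepting state. Profiles compose, so this is a morphism onto a finite monoid and induces a congruence of finite index on the finite words; write $[v]$ for the class of $v$. Applying the infinite Ramsey theorem to the colouring of pairs $m<n$ by $[a_{m+1}\cdots a_n]$, every infinite word $w=a_1a_2\cdots$ factors as $w\in[u]\,[v]^\N$ with $[v][v]\subseteq[v]$, so $(\{0,1\}^k)^\N$ is a finite union of the ``basic'' sets $[u][v]^\N$, any two of which are disjoint or equal. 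One then checks that each basic set is $\omega$-regular and that whether $[u][v]^\N$ meets $L(\A)$ depends only on the pair $([u],[v])$; hence the complement of $L(\A)$ is the union of those finitely many basic sets that miss $L(\A)$, and is again $\omega$-regular. All ingredients --- products, projections, the profile monoid, the finite case analysis --- are effective, so from any sentence $\phi$ one computes a B\"uchi automaton $\A_\phi$ over a one-letter alphabet with $L(\A_\phi)\neq\varnothing$ if and only if $\phi$ holds in $(\N,s)$. Decidability then follows because emptiness of a B\"uchi automaton is decidable: $L(\A)\neq\varnothing$ exactly when, in the underlying directed graph of $\A$, some accepting state is reachable from an initial state and lies on a cycle, a finite reachability check; running it on $\A_\phi$ decides the truth of $\phi$.
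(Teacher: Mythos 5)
Your proof is correct, and it follows the overall strategy that the paper attributes to B\"uchi but never writes out (the paper is a survey and states Theorem~\ref{t:buchi} without proof): code assignments of set variables as infinite words over $\{0,1\}^k$, associate to each formula $\phi$ an automaton $\A_\phi$ with $L(\A_\phi)\neq\varnothing$ if and only if $\phi$ holds, reduce the logical connectives $\lor$, $\exists$, $\lnot$ to closure of $\omega$-regular languages under union, projection, and complementation, and finish with the decidable emptiness test. Where you genuinely diverge from the paper is in the one hard step, complementation. You run B\"uchi's original Ramsey-theoretic argument: transition profiles of finite words form a finite monoid, Ramsey's theorem factors every infinite word into a basic set $[u][v]^\N$ with $[v]$ idempotent, each basic set is either contained in or disjoint from $L(\A)$, and the complement is the finite union of the disjoint ones. (One small overstatement: the basic sets need not be pairwise disjoint or equal; the argument only needs that they cover everything and that each is saturated with respect to $L(\A)$, which is what you actually verify.) The paper's Section~\ref{sec:games} instead routes complementation through Muller acceptance: a deterministic Muller automaton is complemented by complementing its accepting family, determinization is McNaughton's theorem, and the paper derives McNaughton's theorem from determinacy of acceptance games via the Simulation and Complementation theorems for alternating automata. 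Your route is more elementary and self-contained, needing only the infinite Ramsey theorem, with every step visibly effective; the paper's heavier game-theoretic machinery buys generality, since it extends to automata on infinite trees --- which is what Rabin's theorem on $S2S$ requires and which the Ramsey argument does not reach.
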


We next want to consider the monadic theory $S2S$ of two successor functions, that is, the monadic theory of the rooted infinite binary
 tree $T_2$.   Individual variables and quantifiers can actually be eliminated since when a set has exactly
one element is definable in the logic and we often adopt this point of view.
Also,  equality between sets is definable in terms of set inclusion.
The set of vertices of the  rooted infinite binary  tree  $T_2$ can  be viewed as the set $\{0,1\}^*$ of all finite words on $\{0, 1\}$.
We  have a constant for the root of the tree (which corresponds to the empty word $\varepsilon$) and
two set-valued  successor functions, $0$ and $1$.  If $S$ denotes a set of vertices then
$$ S0 = \{v0: v \in S\} \mbox{ \ and \ } S1 = \{v1: v \in S \}.   $$
We also have  the binary relation $\subseteq$ of set inclusion.

In 1969  Rabin~\cite{rabin} developed the theory of finite automata working on infinite trees and proved the following result.

\begin{theorem}[Rabin]
\label{t:rabin}
The monadic second-order theory $S2S$ is decidable.
\end{theorem}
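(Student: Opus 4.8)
The plan is to prove decidability by the automata-theoretic method that B\"uchi introduced for $S1S$ (Theorem~\ref{t:buchi}) and that Rabin extended to the binary tree: effectively attach to each $S2S$-formula a finite automaton running on infinite binary trees, so that the formula holds in $T_2$ precisely when its automaton accepts at least one tree, and then show that the Emptiness Problem for such automata is decidable. For the encoding, an assignment of subsets $X_1,\dots,X_n$ of $V(T_2)=\{0,1\}^*$ to the free set variables of a formula is the same datum as a labelling of the vertices of $T_2$ by the finite alphabet $\{0,1\}^n$, the $i$-th coordinate of the label of a vertex $v$ recording whether $v\in X_i$; so a formula $\phi(X_1,\dots,X_n)$ determines a set $\mathcal{L}(\phi)$ of $(\{0,1\}^n)$-labelled complete binary trees, namely those whose encoded valuation satisfies $\phi$. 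A \emph{tree automaton} over an alphabet $\Sigma$ is a tuple $\mathcal{A}=(Q,\Sigma,q_0,\Delta,\mathrm{Acc})$ with finite state set $Q$, initial state $q_0$, transition relation $\Delta\subseteq Q\times\Sigma\times Q\times Q$, and acceptance condition $\mathrm{Acc}$ (a Muller, equivalently Rabin, condition on sequences of states); a \emph{run} on a $\Sigma$-labelled tree is a $Q$-labelling of $T_2$ carrying $q_0$ at the root and respecting $\Delta$ at every vertex, and it is \emph{accepting} if the state sequence along every infinite branch satisfies $\mathrm{Acc}$. Write $\mathcal{L}(\mathcal{A})$ for the set of trees admitting an accepting run.

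Next I would construct $\mathcal{A}_\phi$ with $\mathcal{L}(\mathcal{A}_\phi)=\mathcal{L}(\phi)$ by induction on the structure of $\phi$. After introducing auxiliary existentially quantified variables for subterms one may take the atomic formulas to be of the shapes ``$X$ is a singleton'', ``$X$ is the singleton consisting of the root'', $X\subseteq Y$, $X=Y0$, $X=Y1$; each is recognised by an obvious small automaton checking a local condition at every vertex. Conjunction and disjunction are handled by a product construction, i.e. the routine closure of recognisable tree languages under intersection and union. Existential quantification $\exists X_i\,\psi$ is handled by \emph{projection}: erase the $i$-th coordinate of the alphabet while keeping states and transitions, obtaining a (more nondeterministic) automaton whose accepted trees are exactly those admitting \emph{some} choice of $X_i$ satisfying $\psi$. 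The one genuinely non-routine step is negation $\neg\psi$, which requires the class of tree-automaton-recognisable languages to be effectively closed under complementation.

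This complementation theorem of Rabin is the main obstacle; everything around it is bookkeeping. Unlike finite automata on finite words, tree automata admit no determinisation, and the complement of a B\"uchi tree language need not be B\"uchi, so one cannot complement by swapping accepting and rejecting states. The approach I would take is through infinite games of perfect information: the question ``does $\mathcal{A}$ accept $t$?'' is recast as a game in which one player incrementally builds a run of $\mathcal{A}$ on $t$ while an adversary picks a branch along which to challenge the acceptance condition; its winning condition lies low in the Borel hierarchy, so by Martin's Borel determinacy theorem~\cite{martin1, martin2} the game is determined, and by the Forgetful Determinacy Theorem of Gurevich and Harrington~\cite{GH} (equivalently, memoryless determinacy of parity games) a winning strategy can be taken to use only a bounded finite amount of memory. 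From a \emph{uniform} finite-memory winning strategy for the adversary one then reads off, effectively, a tree automaton recognising the complement of $\mathcal{L}(\mathcal{A})$. Carrying out this reduction of complementation to finite-memory determinacy is the substantive part of the argument.

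Finally I would settle decidability of emptiness. The property $\mathcal{L}(\mathcal{A})\neq\varnothing$ is equivalent to the existence of a winning strategy, in a game of the same shape as above but with the tree-label now also controlled by the run-builder, for the player constructing an accepting run; by finite-memory determinacy this game is algorithmically solvable, amounting to detecting inside a finite graph derived from $\mathcal{A}$ a sub-structure witnessing such a strategy. Combining the two ingredients: given an $S2S$-sentence $\phi$ (no free variables), form $\mathcal{A}_\phi$ over the one-letter alphabet; then $\phi$ holds in $T_2$ if and only if $\mathcal{L}(\mathcal{A}_\phi)\neq\varnothing$, and the latter is decidable. Hence the monadic second-order theory $S2S$ is decidable.
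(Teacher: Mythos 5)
The paper states Rabin's theorem without proof --- it is a cited result in this survey --- so the only comparison available is with the proof strategy the paper itself sketches in the Introduction and in Section~\ref{sec:games}. Your outline follows exactly that programme: encode valuations of set variables as labellings of $T_2$ over $\{0,1\}^n$, build $\A_\phi$ by induction on $\phi$ using closure under union/intersection, projection for $\exists$, and complementation for $\neg$, and finish by deciding emptiness. The one place where you genuinely diverge from the paper's account is the complementation step. The paper reaches complementation through alternating tree automata: one dualizes the transition function and complements the accepting family (their Complementation Theorem, which then follows from determinacy alone), and converts the alternating automaton back to a nondeterministic one via the Simulation Theorem, whose proof uses the later appearance record of Gurevich and Harrington~\cite{GH}. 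You instead propose to extract a complement automaton directly from a uniform finite-memory winning strategy for the adversary in the acceptance game --- the Rabin/Gurevich--Harrington route. Both approaches rest on the Forgetful Determinacy Theorem and both are standard; the alternating-automata route has the advantage of making complementation itself formally trivial and isolating all the work in the Simulation Theorem, whereas yours keeps only one species of automaton in play. Do note, however, that your sentence ``one then reads off, effectively, a tree automaton recognising the complement'' is precisely where essentially all of the difficulty of Rabin's theorem is concentrated; as written it is a statement of intent rather than an argument (and Martin's Borel determinacy, which you invoke first, gives determinacy but nothing effective --- only the finite-memory refinement does). For a survey-level statement this is acceptable, since the paper itself proves nothing here, but in a self-contained proof that step (or, on the other route, the Simulation Theorem) would have to be carried out in full.
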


As a consequence of Rabin's  theorem, the monadic second-order theory $SnS$ of $n$ successor functions is also decidable since it
can be interpreted in $S2S$. Note that the above theories are about the geometry of the underlying graph.
Analogously then, we can define the second-order monadic theory of any finitely generated graph $\Gamma = (V,E,\Sigma, v_0)$. We thus have again a constant for the origin of the graph $v_0$ and for each $a \in \Sigma$ we have a set-valued successor function where $Sa  = \{v \in V: \exists u \in S$ such that $(u,a,v) \in E\}$
for all $S \subset V$.

\subsection{The Domino Problem}
\label{sub-domino}
Rabin's theorem is one of the most remarkable positive results on decidability.
An important negative  result  is the unsolvability of the \emph{Wang Domino Problem} in the plane.  Whether or not it is possible
to tile the plane with copies of a fixed finite set of square tiles with colored edges was a question raised by Wang~\cite{wang} in the late 1950s.
Of course, when one places a tile next to another one,  the colors on the matching edges must be the same.  Wang
showed that the \emph{origin-constrained} problem is undecidable.  In this version there is a fixed  initial tile which must be
used first.   Indeed, fixing one tile is enough to show  that one can directly simulate the Halting Problem
for Turing machines in this context. Given a Turing machine $\T$ one can write down a set of tiles such that one can tile the entire plane
if and only if $\T$ halts when started with a blank tape.
The general Tiling Problem without an origin constraint  was proved undecidable
by  Berger \cite{berger} in 1966.   In 1971,  Robinson~\cite{robinson} found a simpler proof of the undecidability of the
general problem in the Euclidean plane.

This problem can be reformulated in terms of coloring vertices as follows.  Let $\Gamma$ be the Cayley graph of the standard presentation
$\mathbb Z^2 = \langle x,y ;  [x,y]\rangle$ of the free abelian group of rank $2$.  Let $C = \{c_1,c_2,\ldots,c_k\}$ be a finite
set of \emph{colors}.  The \emph{standard neighborhood} of a vertex $v$
in $\Gamma$ consists of $v$ and its four neighbors: $vx, vx^{-1}, vy$, and $vy^{-1}$ (see Figure~\ref{neighZ2}).

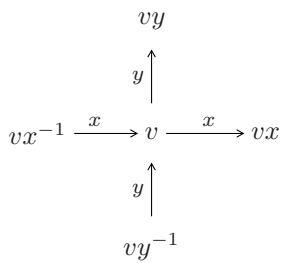
\begin{figure}[h!]
\begin{center}
\gasset{Nframe=n,AHnb=0,Nadjust=w}

\begin{picture}(0,35)(0,-15)
\node(1)(0,0){$v$}
\node(x)(15,0){$vx$}
\node(y)(0,15){$vy$}
\node(x-)(-15,0){$vx^{-1}$}
\node(y-)(0,-15){$vy^{-1}$}

{\footnotesize
\gasset{AHnb=1,AHangle=30,AHLength=1,AHlength=0}
\drawedge(x-,1){$x$}
\drawedge(1,x){$x$}
\drawedge(1,y){$y$}
\drawedge(y-,1){$y$}
}
\end{picture}
\end{center}
\caption{The standard neighborhood of a vertex $v$ in the Cayley graph of
$\mathbb Z^2 = \langle x,y ;  [x,y]\rangle$.}\label{neighZ2}
\end{figure}

We are also given a set $\mathcal F$ of \emph{forbidden patterns} where a \emph{pattern} $p \in {C}^5$ is a coloring of the vertices of the
standard neighborhood with colors from $C$.  The \emph{Domino Problem} for $\mathbb Z^2$ is the following decision problem:
given a pair $(C, \mathcal F)$ as above,
can all the vertices of the Cayley graph $\Gamma$ be colored so that there are no forbidden patterns? Note that since $\Gamma$ can be viewed as the dual graph of the tessellation by squares, this version is easily seen to be equivalent to the original formulation in terms of square tiles.

Our reformulation of the Domino Problem applies to an arbitrary finitely generated group $G$.
Also, the Domino Problem is easily expressible in terms of the monadic second-order logic of the Cayley graph $\Gamma$ of $G$ with respect to the given presentation.
A tuple $(C_1, C_2, \dots, C_k)$ of sets of elements of $G$ is a \emph{disjoint cover of $G$} if every element of $G$
belongs to exactly one of the $C_i$. (A disjoint cover differs from a partition only in
that some of the $C_i$ may be empty.)
We need only say that there is a disjoint cover  $(C_1,C_2, \ldots,C_ k)$ of the
vertices corresponding to the colors $c_1,c_2,\ldots,c_k$ such that there are no forbidden patterns. For example, if  the $i$-th pattern in $\mathcal F$ centered at $v$ has color $c_v$ at $v$ and colors $c_x, c_{x^{-1}}, c_y$,
and $c_{y^{-1}}$  at $vx,vx^{-1}, vy$, and $vy^{-1}$ respectively, we abbreviate this as $p_i$, and we must say that such a
pattern does not occur.
We can write this as:
\[
\exists C_1\exists C_2 \cdots \exists C_k \forall v [[\bigvee_{i} [v \in C_i]  \land [\bigwedge_{i < j} [v \in C_i \Rightarrow v \notin C_j ]]
 \land [\bigwedge_{p_i \in \mathcal F} \lnot p_i]].
\]

Note that from the point of view of logical complexity, measured in terms of alternation of quantifiers, the sentence above is very simple.
It consists of one block of existential set quantifiers followed by one universal individual quantifier and such sentences are already
undecidable.  There is thus a precise sense in which the monadic logic of the Cayley graph of  $\mathbb Z^2$ is infinitely more complicated
than the monadic logic of the infinite binary tree, where the entire monadic theory is decidable.


Recently,  Margenstern~\cite{margenstern} (see also~\cite{margenstern1} for a
shorter account) proved that the general Tiling Problem of the \emph{hyperbolic plane} is undecidable by
 using  a regular polygon as the basic shape of the tiles.  Robinson raised this problem  in the above mentioned paper
and  in 1978 he proved that the  origin-constrained problem is undecidable for the hyperbolic plane~\cite{robinson2}.
The fundamental group of a closed orientable surface of genus $2$ has a presentation
$G_2 = \langle a,b,c,d; [a,b][c,d] \rangle$. The corresponding Cayley graph induces
a tessellation of the hyperbolic plane by regular octagons and every vertex
is on  exactly eight such octagons (thus  the graph is self-dual).  We can reformulate Margenstern's undecidability result in group-theoretical language as follows.

\begin{theorem}[Margenstern]
The Domino Problem for the surface group $G_2$ is undecidable.
\end{theorem}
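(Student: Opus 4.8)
The plan is to reduce the Halting Problem for Turing machines to the Domino Problem for $G_2$, following the classical ``hierarchical forcing'' strategy that Berger and Robinson used for $\mathbb{Z}^2$ but carried out inside the hyperbolic tessellation. First I would fix the geometric picture recalled just above: the Cayley graph $\Gamma = \Gamma(G_2 : a,b,c,d; [a,b][c,d])$ is a self-dual tessellation of the hyperbolic plane by regular octagons, each vertex $v$ lying on eight octagons and having the eight neighbours $va^{\pm1},vb^{\pm1},vc^{\pm1},vd^{\pm1}$; in the Domino Problem for $G_2$ one is given a finite colour set $C$ and a finite family $\mathcal{F}$ of forbidden patterns (colourings of a standard neighbourhood) and asks whether $V(\Gamma)$ can be coloured with no forbidden pattern occurring. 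The goal is to construct, effectively from a Turing machine $\mathcal{T}$, a pair $(C,\mathcal{F})$ such that $\Gamma$ admits a valid colouring if and only if $\mathcal{T}$ does not halt on the empty input; undecidability of the Domino Problem for $G_2$ then follows from that of the Halting Problem by the reduction principle of Section~\ref{sec:LGA}. Note that one cannot simply pull back the undecidable $\mathbb{Z}^2$ problem, since $G_2$ is word-hyperbolic and hence contains no copy of $\mathbb{Z}^2$; the hyperbolic geometry must be used directly.

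The colour set will be a product $C = C_{\mathrm{sk}} \times C_{\mathrm{comp}}$ of two layers whose local rules are imposed independently. The first, \emph{skeleton} layer is the heart of the matter: I would build a finite set of skeleton colours together with nearest-neighbour rules forcing every valid colouring of $\Gamma$ to carry a self-similar hierarchical decomposition of the octagonal tessellation into nested \emph{macro-cells} of unbounded size, each macro-cell having a distinguished root vertex and internal combinatorics of bounded complexity --- a hyperbolic analogue of Robinson's aperiodic tile set. Because the octagonal grid branches exponentially and is combinatorially tree-like, the propagation rules must keep the macro-structure coherently aligned across this branching; I would do this by attaching to the skeleton colours a bounded numbering adapted to the Fibonacci-type coordinate structure of the $\{8,8\}$-grid, so that the parent--child relation between macro-cells is recoverable from purely local data. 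On the second, \emph{computation} layer I would then run, inside each macro-cell and anchored at its root, the origin-constrained tiling that simulates $\mathcal{T}$ started on the blank tape; that such a simulation exists, with the property that a rooted region of size $N$ admits a valid computation-colouring exactly when $\mathcal{T}$ has not halted within the space--time bound permitted by $N$, is precisely the content of Robinson's undecidability theorem for the origin-constrained Domino Problem in the hyperbolic plane, which I would invoke as a black box.

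Correctness then splits into the two standard directions. If $\mathcal{T}$ never halts, one exhibits a valid colouring by first fixing one hierarchical skeleton colouring of the whole plane --- which requires knowing the skeleton rules are satisfiable, itself part of the skeleton construction --- and then filling in the computation layer macro-cell by macro-cell, no contradiction ever arising. Conversely, if $\mathcal{T}$ halts after $n$ steps, then in any purported valid colouring some macro-cell has size exceeding the space--time bound for $n$ steps, forcing the computation layer inside it into a local halting contradiction, so no valid colouring of $\Gamma$ exists. The main obstacle is the skeleton layer: producing a genuinely aperiodic, hierarchically self-similar finite tile set for the $\{8,8\}$-tessellation whose rigidity is enforced entirely by constraints on nine-vertex standard neighbourhoods. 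All long-range structure has to be engineered out of local rules on an exponentially branching grid, while keeping the colour set finite and the hierarchy simultaneously forced and realizable; this combinatorial core is exactly where Margenstern's argument does its work, the computation layer being, once a root is available in each macro-cell, the familiar Turing-machine-to-tiles encoding.
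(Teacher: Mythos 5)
You should first be aware that the paper offers no proof of this statement at all: it is presented as a group-theoretic \emph{reformulation} of Margenstern's theorem that the general tiling problem of the hyperbolic plane is undecidable, the only content supplied being the observation that the Cayley graph of $G_2 = \langle a,b,c,d; [a,b][c,d]\rangle$ is the self-dual graph of the regular octagonal tessellation $\{8,8\}$, so that the vertex-colouring Domino Problem for $G_2$ is equivalent to Margenstern's tile-based problem, which is then cited. Your outline is therefore being measured against an external proof, and as a description of the architecture of that proof it is essentially accurate: a two-layer colour set, a hierarchical skeleton forcing nested macro-cells of unbounded size, and a computation layer simulating a Turing machine rooted in each macro-cell, with halting producing a contradiction inside every sufficiently large cell.

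The genuine gap is that the one step carrying all of the difficulty --- the construction of a finite skeleton alphabet whose purely local rules on standard neighbourhoods both force and admit an aperiodic, self-similar hierarchy of macro-cells coherently aligned across the exponentially branching $\{8,8\}$ grid --- is not carried out; you explicitly defer it to Margenstern. Since that construction \emph{is} the theorem, the proposal as written reduces to a citation dressed as a reduction. A secondary inaccuracy: the finite-region statement you need for the computation layer (that a rooted region of size $N$ is colourable exactly when $\mathcal{T}$ has not halted within the space--time bound permitted by $N$) is not the content of Robinson's 1978 origin-constrained undecidability theorem, which is itself a global undecidability statement rather than a lemma about finite regions; what you actually need there is the elementary Wang--Robinson encoding of a space--time diagram into local matching rules, adapted to the octagonal geometry, and that adaptation is again nontrivial because successive tape configurations must be laid out along a branching structure rather than along parallel lines of a grid. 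Your remark that one cannot simply transport the $\mathbb{Z}^2$ result because the word-hyperbolic group $G_2$ contains no copy of $\mathbb{Z}^2$ is correct and is precisely why the paper must appeal to Margenstern rather than to Berger.
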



\subsection{Decidability of monadic second-order theory for context-free groups}
Recall that a finitely generated group $G$ has context-free Word Problem if and only if $G$ is virtually free (see Theorem~\ref{t:MS-characterization context-free}).
Now the Cayley graph of a finitely generated virtually free group has a regular tree of finite index. Namely, the subgraph corresponding to the Cayley graph of the free subgroup of finite index.
In this case one can reduce the monadic theory of $G$ to the monadic
theory of the subtree. As a consequence, we have the following result~\cite{MSca}.

\begin{theorem}[Muller-Schupp]
\label{t;MS-dec}
The monadic second-order theory of a Cayley graph of a context-free group is decidable.
\end{theorem}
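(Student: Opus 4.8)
The plan is to reduce the monadic second-order theory of $\Gamma$, in two effective steps, to the monadic second-order theory of the rooted infinite binary tree, which is decidable by Rabin's Theorem~\ref{t:rabin}. By the Muller--Schupp characterization of context-free groups (Theorem~\ref{t:MS-characterization context-free}) the group $G$ contains a free subgroup $H$ of finite index $k$; I would fix a free basis $Y$ of $H$ and coset representatives $1_G = g_1,g_2,\dots,g_k$, so that every $g\in G$ is uniquely of the form $g = h g_i$ with $h\in H$ and $1\le i\le k$. Let $T = \Gamma(H:Y)$ be the Cayley graph of $H$ with respect to $Y$; it is a $2|Y|$-regular tree, and (a realization of) $T$ sits inside $\Gamma$ with its vertex set $H$ at bounded distance from every vertex of $\Gamma$ --- which is the geometric picture behind the reduction, even though the reduction itself will be carried out algebraically.

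The first step is to set up an MSO-interpretation of $\Gamma$ inside the tree $T$. The bijection $hg_i\mapsto(h,i)$ identifies $V(\Gamma)=G$ with $V(T)\times\{1,\dots,k\}$, so a set of vertices of $\Gamma$ is coded by a $k$-tuple of sets of vertices of $T$, a vertex quantifier over $\Gamma$ becomes a vertex quantifier over $T$ together with a finite disjunction over the tag, and --- crucially --- the basepoint and the set-valued successor functions of $\Gamma$ are first-order (hence monadic) definable in $T$ under this coding. Indeed, for $x\in\Sigma$ write $g_i x = h_{i,x}\,g_{j(i,x)}$ with $h_{i,x}\in H$ a fixed element determined by $i$ and $x$; then the $x$-edge of $\Gamma$ runs from $(h,i)$ to $(h\,h_{i,x},\,j(i,x))$, and the relation ``$v' = v\,h_{i,x}$'' is expressed in the labelled tree $T$ by walking along the path labelled by the fixed reduced $Y$-word for $h_{i,x}$, a first-order condition with a bounded number of quantifiers; likewise $1_G=(1_H,1)$ with $1_H$ the root of $T$. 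It follows that every monadic sentence $\varphi$ about $\Gamma$ can be effectively transformed into a monadic sentence $\varphi^{*}$ about $T$ with $\Gamma\models\varphi\iff T\models\varphi^{*}$, so it suffices to decide the monadic theory of $T$.

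The second step is to interpret the $2|Y|$-regular tree $T$ inside the binary tree $T_2$. Rooting $T$ at $1_H$, the root has $2|Y|$ children and every other vertex has $2|Y|-1$ children; encoding a child-direction by a binary word of the fixed length $\lceil\log_2(2|Y|)\rceil$ identifies $V(T)$ with an MSO-definable subset of $\{0,1\}^*=V(T_2)$, under which the successor functions and edge labels of $T$ become MSO-definable relations on $T_2$. This again yields an effective translation of monadic sentences about $T$ into sentences of $S2S$, decidable by Theorem~\ref{t:rabin}. Composing the two translations produces a decision procedure for the monadic second-order theory of $\Gamma$.

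I expect the main obstacle to lie in the first step: verifying that right multiplication by a fixed element of $H$ and the coset arithmetic $g_i x = h_{i,x}\,g_{j(i,x)}$ are genuinely first-order definable in the labelled tree $T$, and that the coding $hg_i\mapsto(h,i)$ really induces a correct translation on set variables and on the set-valued successor functions of the signature in which the monadic logic of $\Gamma$ is formulated. The geometric input --- that the finite-index subgroup $H$ is $D$-dense in $\Gamma$ for $D=\max_i\dist(g_i,1_G)$ --- and the transitivity of reductions between monadic theories (if the monadic theory of $B$ is decidable and $A$ admits an MSO-interpretation in $B$, then the monadic theory of $A$ is decidable) are routine and standard; an alternative to the second step is to observe that $T$, being a regular subtree of finite index in $\Gamma$, already places $\Gamma$ within the scope of the general reduction of the monadic theory of a pushdown automaton's transition graph to $S2S$.
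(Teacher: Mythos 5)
Your proposal is correct and follows essentially the same route as the paper, which only sketches the argument: the Cayley graph of a virtually free group contains the Cayley graph of its free subgroup of finite index as a regular subtree of finite index, and the monadic second-order theory of $\Gamma$ is reduced to that of this subtree and hence, via Rabin's Theorem~\ref{t:rabin}, decided. Your write-up merely supplies the details of the coset-coding interpretation that the paper leaves implicit.
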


\begin{corollary}
The Domino Problem for context-free groups is decidable.
\end{corollary}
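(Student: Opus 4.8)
The plan is to reduce the Domino Problem for a context-free group $G$ to the decision problem for the monadic second-order theory of a Cayley graph of $G$, and then invoke Theorem~\ref{t;MS-dec}. First I would fix any finitely generated presentation $G = \langle X;R\rangle$, with group alphabet $\Sigma = X \cup X^{-1}$, and let $\Gamma = \Gamma(G:X;R)$ be the associated Cayley graph, viewed as a finitely generated graph equipped with the set-valued successor functions $S \mapsto Sa$ for $a \in \Sigma$. Recall (this was set up in Subsection~\ref{sub-domino}) that an instance of the Domino Problem for $G$ is a pair $(C,\mathcal F)$, where $C = \{c_1,\dots,c_k\}$ is a finite set of colors and $\mathcal F$ is a finite set of forbidden patterns, a pattern being a coloring of the standard neighborhood $\{v\} \cup \{va : a \in \Sigma\}$ of a vertex $v$.

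The key step is to observe that the pair $(C,\mathcal F)$ translates \emph{effectively} into a single monadic sentence $\phi_{(C,\mathcal F)}$ over $\Gamma$. Since singletons are definable in the logic, we may quantify over individual vertices; and for a vertex $v$ the neighbors $va$, $a \in \Sigma$, are obtained by applying the successor functions to $\{v\}$, so for each fixed pattern $p$ the assertion ``the pattern centered at $v$ is $p$'' is expressible by a formula $\pi_p(v, C_1,\dots,C_k)$ in the monadic language of $\Gamma$ with free set variables $C_1,\dots,C_k$, one per color. Then $\phi_{(C,\mathcal F)}$ is precisely the sentence displayed in Subsection~\ref{sub-domino}:
\[
\exists C_1 \cdots \exists C_k\, \forall v \left[ \bigvee_i (v \in C_i) \ \wedge\ \bigwedge_{i<j}(v \in C_i \Rightarrow v \notin C_j) \ \wedge\ \bigwedge_{p \in \mathcal F} \neg\, \pi_p(v, C_1,\dots,C_k) \right],
\]
which says that $V(\Gamma)$ admits a disjoint cover by color classes $C_1,\dots,C_k$ containing no forbidden pattern. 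By construction, the instance $(C,\mathcal F)$ has a solution if and only if $\phi_{(C,\mathcal F)}$ is true in $\Gamma$, and the map $(C,\mathcal F) \mapsto \phi_{(C,\mathcal F)}$ is computable.

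Finally I would invoke Theorem~\ref{t;MS-dec}: since $G$ is context-free, the monadic second-order theory of $\Gamma$ is decidable, so there is an algorithm deciding whether $\Gamma$ satisfies $\phi_{(C,\mathcal F)}$. Composing it with the translation above yields an algorithm that, on input $(C,\mathcal F)$, decides whether the corresponding tiling exists; hence the Domino Problem for $G$ is decidable.

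Essentially all the real content sits in Theorem~\ref{t;MS-dec} — and, behind it, the Muller-Schupp characterization of context-free groups (Theorem~\ref{t:MS-characterization context-free}) together with Rabin's theorem. The only thing that needs care here is checking that the Domino Problem lies at the very bottom of the quantifier hierarchy of the monadic language of $\Gamma$ and uses nothing beyond the successor functions already present in its signature; the mild point being that for a general presentation $\langle X;R\rangle$ the standard neighborhood of $v$ has $1+|\Sigma|$ vertices rather than the five of the $\Z^2$ case, so the predicates $\pi_p$ must be written out accordingly. Once that is in place, both the translation and the appeal to decidability are routine, so I do not expect any serious obstacle.
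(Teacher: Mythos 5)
Your proposal is correct and follows exactly the route the paper intends: the Domino Problem instance is effectively translated into the monadic second-order sentence already displayed in Subsection~\ref{sub-domino}, and decidability then follows immediately from Theorem~\ref{t;MS-dec}. The paper leaves this corollary without an explicit proof precisely because that translation was set up earlier, so your write-up just makes the implicit argument explicit.
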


Kuske and Lohrey~\cite{KL} have recently proved the converse to Theorem~\ref{t;MS-dec}.

\begin{theorem}[Kuske-Lohrey]
If the monadic second-order theory of a Cayley graph of a finitely generated group is decidable, then the group is context-free.
\end{theorem}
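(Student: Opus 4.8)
The plan is to prove the contrapositive: if $G = \langle X;R\rangle$ is finitely generated but \emph{not} virtually free, then the monadic second-order theory of its Cayley graph $\Gamma = \Gamma(G:X;R)$ is undecidable. By Theorem~\ref{t:MS-characterization context-free}, being context-free is the same as being virtually free, so this formulation is equivalent to the one stated, and I will argue throughout with the geometric notion.

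The backbone of the argument is Seese's theorem: a connected graph of bounded degree whose monadic second-order theory is decidable must have bounded tree-width. Since $X$ is finite, $\Gamma$ has bounded degree, and since forgetting labels and orientations is monadically definable, decidability of the monadic theory of $\Gamma$ passes to the underlying undirected graph. Hence it suffices to show that the Cayley graph of a finitely generated group that is not virtually free has \emph{unbounded} tree-width. I would first establish (or cite) the equivalence, for the Cayley graph of a finitely generated group $G$, of the statements: (i) $G$ is virtually free; (ii) $\Gamma$ has bounded tree-width. The implication (i)$\Rightarrow$(ii) is standard and geometric (a virtually free group is quasi-isometric to a tree, and a bounded-degree graph quasi-isometric to a tree has bounded tree-width). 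The converse (ii)$\Rightarrow$(i) is the substantive half: one feeds $\Gamma$ into the Stallings Structure Theorem together with accessibility, so that bounded tree-width forces the successive splittings over finite subgroups to terminate, exhibiting $G$ as the fundamental group of a finite graph of finite groups, hence virtually free. Granting this, a non-virtually-free $G$ has Cayley graph of unbounded tree-width, Seese's theorem applies in its contrapositive form, and the monadic second-order theory of $\Gamma$ is undecidable.

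A more self-contained variant, closer to the material of Section~\ref{sub-domino}, would replace the appeal to Seese's theorem by an explicit reduction: from unbounded tree-width one extracts arbitrarily large grid minors via the Excluded Grid Theorem, promotes them to subdivisions using the bounded-degree hypothesis, and then produces a monadic second-order interpretation of the standard $\Z^2$-grid inside $\Gamma$ (more precisely, a family of interpretations of the $n\times n$ grids, uniform in $n$). Since the Domino Problem for $\Z^2$ is undecidable by Berger's theorem and, as shown in Section~\ref{sub-domino}, is expressed in the grid by a single monadic sentence, such an interpretation transports undecidability into the monadic theory of $\Gamma$: a decision procedure for the latter would decide, for each pair $(C,\mathcal{F})$, whether $\Z^2$ admits a tiling avoiding the forbidden patterns $\mathcal{F}$.

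I expect the main obstacle to be exactly the passage from ``$G$ is not virtually free'' to a \emph{usable} grid-like structure inside $\Gamma$ --- equivalently, the nontrivial direction (ii)$\Rightarrow$(i) above. The delicate point is that $G$ is only assumed finitely generated, so Dunwoody's accessibility theorem does not apply verbatim; one must either first argue that a Cayley graph of bounded tree-width must belong to a finitely presented group, or pass to a suitable finitely presented (indeed one-ended) subgroup on which the structure theory can be run. Here the homogeneity of $\Gamma$ is essential: in a vertex-transitive graph the failure of tree-likeness cannot be confined to a bounded region, so the obstruction is already present near the origin and can be captured by a single monadic formula --- which is precisely what an interpretation, or the hypothesis of Seese's theorem, requires.
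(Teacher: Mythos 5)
The paper does not actually prove this theorem: it is stated as a citation to Kuske and Lohrey~\cite{KL}, with no argument given, so there is no internal proof to measure your proposal against. That said, your outline does follow the strategy of the published proof: Seese's theorem (decidable monadic second-order theory of a bounded-degree connected graph forces bounded tree-width) combined with the equivalence, for Cayley graphs, of bounded tree-width and virtual freeness. Your second, ``self-contained'' variant via grid minors and the Domino Problem is not really an independent route --- extracting large grids from unbounded tree-width and interpreting the undecidable monadic theory of the grid is essentially how Seese's theorem is itself proved --- so the two paragraphs are one argument, not two.

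The genuine gap is exactly the one you flag and then leave open: the implication ``Cayley graph of bounded tree-width $\Rightarrow$ $G$ virtually free.'' Your plan is to run Stallings plus accessibility, but Dunwoody's accessibility theorem requires finite presentability, which is not among the hypotheses, and the paper itself records (citing~\cite{dunwoody2}) that finitely generated groups need not be accessible. So as written the induction on splittings has no a priori reason to terminate. Closing this requires an additional substantive input --- e.g.\ that bounded tree-width of a locally finite vertex-transitive graph forces accessibility in the sense of Thomassen--Woess (a uniform bound on the size of separators between ends), or that such a group is finitely presentable --- and none of the results quoted in this survey supplies it. Until that step is supplied, the proposal is a correct reduction of the theorem to two external results (Seese's theorem and the tree-width characterization of virtually free groups), the second of which carries most of the difficulty, rather than a proof.
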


In the section on graphs with finitary end structure, we mentioned that all such graphs also have a regular subtree of finite index.
Thus we have the following result from~\cite{MSca}.

\begin{theorem}[Muller-Schupp]
Let $\Gamma $ be the complete transition graph of a pushdown automaton.  Then the monadic second-order theory of $\Gamma$ is decidable.
\end{theorem}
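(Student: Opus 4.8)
The plan is to reduce the monadic second-order theory of $\Gamma$ to that of the infinite binary tree $T_2$ and then apply Rabin's theorem (Theorem~\ref{t:rabin}), following the same pattern as the proof of Theorem~\ref{t;MS-dec}. First I would observe that, being the complete transition graph of a pushdown automaton, $\Gamma$ is context-free, so by the Muller--Schupp characterization of graphs with finitary end-structure we have $c(\Gamma) < \infty$. As remarked after that theorem, this forces $\Gamma$ to be ``very treelike'': one can exhibit a \emph{rational subtree of finite index}, that is, a subtree $T \subseteq \Gamma$ whose vertex set is recognized by a finite automaton, together with a constant $D \ge 0$ such that every vertex of $\Gamma$ lies within (undirected) distance $D$ of some vertex of $T$. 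Since $\Gamma$ has bounded degree, $T$ has bounded branching, hence $T$ is isomorphic to a subtree of the full rooted $n$-ary tree $T_n$ for a suitable $n \ge 2$, and $T_n$ is itself interpretable in $T_2$; so it suffices to interpret $\Gamma$ monadically in $T$.

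The core of the argument is this monadic interpretation. I would encode a vertex $v \in V(\Gamma)$ by the pair consisting of a nearest vertex $\tau(v) \in T$ and a bounded label recording which of the finitely many vertices at distance $\le D$ from $\tau(v)$ equals $v$ (breaking ties in $\tau$ by some fixed rule). Because $D$ and the vertex degrees are bounded, there are only finitely many such ``offset types'', so a set of vertices of $\Gamma$ is coded by a fixed-length tuple of subsets of $V(T)$ and a second-order quantifier over $V(\Gamma)$ unfolds into a bounded block of second-order quantifiers over $V(T)$. The point that requires real work is that, under this encoding, the origin constant, set membership, and each labelled edge relation of $\Gamma$ become MSO-definable over $T$. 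This is exactly where $c(\Gamma) < \infty$ is used: finitely many end-isomorphism types of components of $\Gamma \setminus \Gamma_n$ means the ``fattening'' of $T$ into $\Gamma$, and the pattern of edges among the fattened pieces, are \emph{regular} --- computable by a finite automaton running along $T$ --- and every relation so computed on a regular tree is MSO-definable (the automaton--logic correspondence underlying Rabin's theorem). Assembling these definable ingredients yields, effectively, for each monadic sentence $\phi$ about $\Gamma$ a monadic sentence $\widehat{\phi}$ about $T$ such that $\Gamma \models \phi$ if and only if $T \models \widehat{\phi}$.

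To conclude, I would compose interpretations: $\Gamma$ in $T$, $T$ in $T_n$, and $T_n$ in $T_2$. As monadic interpretations are effective and transport decidability of the monadic theory, and the monadic second-order theory of $T_2$ is decidable by Theorem~\ref{t:rabin}, the monadic second-order theory of $\Gamma$ is decidable.

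I expect the main obstacle to be the MSO-definability of the labelled edge relation of $\Gamma$ over the rational subtree $T$: one must make precise in what sense a pushdown graph is a regular fattening of a regular tree and check that the gluing data is recognized by a tree-walking finite automaton, and --- crucially --- that the whole translation $\phi \mapsto \widehat{\phi}$ is \emph{computable}, not merely existent. Everything else (the finitely many offset types, the unfolding of quantifiers, composing the interpretations) is routine bookkeeping once that structural fact is in place.
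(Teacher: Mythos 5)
Your proposal follows exactly the route the paper takes (and only sketches): use the Muller--Schupp characterization to get $c(\Gamma)<\infty$, extract a regular subtree $T$ of finite index, interpret the monadic theory of $\Gamma$ in that of $T$ (hence in $S2S$), and conclude by Rabin's theorem. You supply more detail than the survey does --- in particular the offset-type encoding and the correct identification of where the real work lies (effective MSO-definability of the edge relations over $T$) --- but the strategy is the same, so this is a faithful and correct reconstruction.
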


\section{Cellular Automata on Groups}\label{sec:CA}
Cellular automata were introduced by von Neumann~\cite{Burks,neumann} who used them to describe theoretical models of self-reproducing machines.
Although originally defined on the lattice of integer points in Euclidean plane, cellular automata can be defined over any group.

Let $G$ be a group, called the \emph{universe}, and  let $\Sigma$ be a finite alphabet
called the set of \emph{states} (or \emph{colors}). Denote by $\Sigma^G$ the set of all maps $\alpha \colon G \to \Sigma$,
called \emph{configurations}. When equipped with the prodiscrete topology, that is,
the product topology obtained by taking the
discrete topology on each factor $\Sigma$ of $\Sigma^G = \prod_{g \in G} \Sigma$, the configuration space becomes a compact, Hausdorff,
totally disconnected  topological space.
There is a natural continuous left action of $G$ on $\Sigma^G$ given by
$g\alpha(h) = \alpha(g^{-1}h)$ for all $g,h \in G$ and $\alpha \in \Sigma^G$.
This action is called the $G$-\emph{shift} on $\Sigma^G$.

\begin{definition}
A map $\CC \colon \Sigma^G \to \Sigma^G$ is called a \emph{cellular automaton}
provided there exists a finite subset $M \subset G$ and a map
$\mu \colon \Sigma^{M} \to \Sigma$ such that
\begin{equation}
\label{e:ca2}
\CC(\alpha)(g) = \mu((g^{-1} \alpha)\vert_{M})
\end{equation}
for all $\alpha  \in \Sigma^G$ and $g \in G$, where $(\cdot)\vert_M$ denotes the restriction to $M$. The subset $M \subset G$ is called a
\emph{local neighborhood} (or \emph{memory set}) for $\CC$ and $\mu$ is the associated \emph{local defining map}.
\end{definition}

\begin{example}[The majority action on $\Z$]
Consider $G = \Z$, $\Sigma = \{0,1\}$, $M = \{-1,0,1\}$ and $\mu\colon \Sigma^M \equiv \Sigma^3 \to \Sigma$ defined by
$$
\mu(a_{-1},a_0,a_1) =
\begin{cases} 1 & \mbox{ if } a_{-1}+a_0+a_1 \geq 2\\
0 & \mbox{ otherwise.}
\end{cases}
$$
Figure~\ref{fig:maj} illustrates the behavior of the corresponding cellular automaton $\CC \colon \Sigma^\Z \to \Sigma^\Z$.
Note that $\CC$ is surjective but not injective.
\begin{figure}[h!]
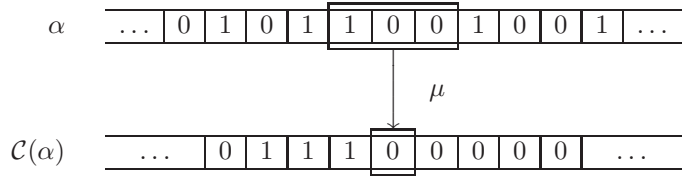

$$
\begin{array}{rcccccccccccccc}
\hhline{~~-----|===|-----}
\alpha&&\dots & \multicolumn{1}{|c}{0} & \multicolumn{1}{|c}{1} & \multicolumn{1}{|c}{0} & \multicolumn{1}{|c|}{1} & \multicolumn{1}{c}{1} & \multicolumn{1}{|c|}{0} & \multicolumn{1}{c|}{0} & \multicolumn{1}{c|}{1} & \multicolumn{1}{c|}{0} & \multicolumn{1}{c|}{0} & \multicolumn{1}{c|}{1} & \dots\\
\cline{3-7}\cline{11-15}
\hhline{~~~~~~~|===|~~~~~}
&&&&&&&&  {\Bigg \downarrow} &\mu&&&&&\\
\hhline{~~------|=|------}
\CC(\alpha) & &\multicolumn{2}{c}{\dots} & \multicolumn{1}{|c}{0} & \multicolumn{1}{|c}{1} & \multicolumn{1}{|c}{1} & \multicolumn{1}{|c|}{1} & \multicolumn{1}{c|}{0} & \multicolumn{1}{c|}{0} & \multicolumn{1}{c|}{0} & \multicolumn{1}{c|}{0} & \multicolumn{1}{c|}{0} & \multicolumn{2}{c}{\dots}\\
\cline{3-8}\cline{10-15}
\hhline{~~~~~~~~|=|~~~~~~}
\end{array}
$$
\caption{The cellular automaton defined by the majority action on $\Z$.}\label{fig:maj}
\end{figure}
\end{example}

\begin{example}[Hedlund's marker \cite{hedlund}]
Let $G = \Z$, $\Sigma = \{0,1\}$, $M = \{-1,0,1,2\}$ and $\mu \colon \Sigma^M \equiv \Sigma^4 \to \Sigma$ defined by
$$
\mu(a_{-1},a_0,a_1,a_2) =
\begin{cases} 1 - a_0 & \mbox{ if } (a_{-1},a_1,a_2) = (0,1,0)\\
a_0 & \mbox{ otherwise.}
\end{cases}
$$
The corresponding
cellular automaton $\CC \colon \Sigma^\Z \to \Sigma^\Z$ is a nontrivial involution of $\Sigma^\Z$.  It is described in Figure~\ref{fig:hed}.
\begin{figure}[h!]
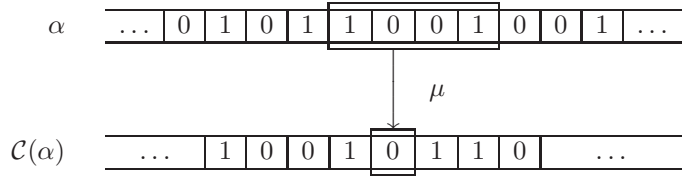

$$
\begin{array}{rcccccccccccccc}
\hhline{~~-----|====|----}
\alpha&&\dots & \multicolumn{1}{|c}{0} & \multicolumn{1}{|c}{1} & \multicolumn{1}{|c}{0} &
\multicolumn{1}{|c|}{1} & \multicolumn{1}{c}{1} & \multicolumn{1}{|c|}{0} & \multicolumn{1}{c|}{0} &
\multicolumn{1}{c|}{1} & \multicolumn{1}{c|}{0} & \multicolumn{1}{c|}{0} & \multicolumn{1}{c|}{1} &
\dots\\
\cline{3-7}\cline{11-15}
\hhline{~~~~~~~|====|~~~~}
&&&&&&&&  {\Bigg \downarrow} &\mu&&&&&\\
\hhline{~~------|=|------}
\CC(\alpha) & &\multicolumn{2}{c}{\dots} & \multicolumn{1}{|c}{1} & \multicolumn{1}{|c}{0} &
\multicolumn{1}{|c}{0} & \multicolumn{1}{|c|}{1} & \multicolumn{1}{c|}{0} & \multicolumn{1}{c|}{1} &
\multicolumn{1}{c|}{1} & \multicolumn{1}{c|}{0} & \multicolumn{3}{c}{\dots}\\
\cline{3-8}\cline{10-15}
\hhline{~~~~~~~~|=|~~~~~~}
\end{array}
$$
\caption{The cellular automaton defined by the Hedlund marker.}\label{fig:hed}
\end{figure}
\end{example}

\begin{example}[Conway's Game of Life]
Let $G = \Z^2$, $\Sigma = \{0,1\}$, $M = \{-1,0,1\}^2 \subset \Z^2$ and $\mu\colon \Sigma^M \to \Sigma$ given by
\begin{equation}
\label{GoL}
\mu(y) =\left\{
\begin{array}{ll}
1 &\mbox{if}\left\{ \begin{array}{l} \mbox{ } \displaystyle\sum_{m
\in M} y(m) = 3 \\
\mbox{or } \displaystyle\sum_{m \in M} y(m) = 4 \mbox{ and } y((0,0)) = 1
\end{array} \right. \\
0 &\mbox{otherwise}
\end{array} \right.
\end{equation}
for all $y \in \Sigma^M$.
The corresponding cellular automaton $\CC \colon \Sigma^{\Z^2} \to \Sigma^{\Z^2}$ describes the \emph{Game of Life} due to Conway. One thinks of an element $g$ of $G = \Z^2$ as a
``cell'' and the set $gM$ (we use multiplicative notation) as the set consisting of
its eight neighboring cells, namely the North, North-East, East, South-East, South,
South-West, West and North-West cells. We interpret  state $0$  as corresponding  to the \emph{absence} of life while state $1$
corresponds to the  \emph{presence} of life.  We  thus refer to  cells in state $0$ as
\emph{dead} cells and to cells  in state $1$ as  \emph{live} cells.
Finally, if $\alpha  \in \Sigma^{\Z^2}$ is a configuration at time $t$, then $\CC(\alpha)$ represents the evolution of the configuration  at time $t+1$.
Then the cellular automaton in \eqref{GoL} evolves as  follows.
\begin{itemize}
\item {\it Birth:} a cell that is dead at time $t$ becomes alive at time $t+1$ if and only if  three of its neighbors are alive at time $t$.
\item{\it Survival:}  a cell that is alive at time $t$ will remain
alive at time $t+1$ if and only if it has  exactly two or three
live neighbors at time $t$.
\item{\it Death by loneliness:} a live cell that has at most one live
neighbor at time $t$ will be dead at time $t+1$.
\item{\it Death by overcrowding:} a cell that is alive at time $t$
and has four or more live neighbors at time $t$, will be
dead at time $t+1$.
\end{itemize}
Figure~\ref{fig:gol} illustrates all these cases.
Note that $\CC$ is not injective and it can be shown that $\CC$ is not surjective either.
\end{example}
\begin{figure}[h!]
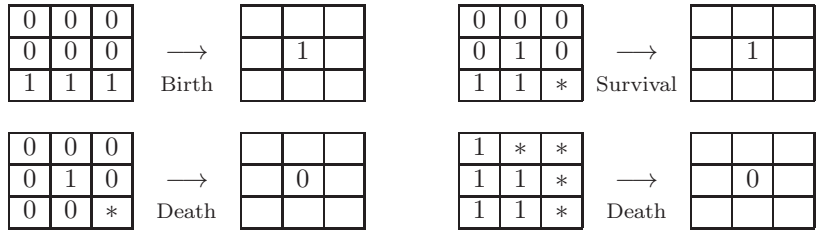

$$
\begin{array}{ccccccccccccccc}
\cline{1-3}\cline{5-7}\cline{9-11}\cline{13-15}
\multicolumn{1}{|c}{0} & \multicolumn{1}{|c}{0} & \multicolumn{1}{|c|}{0} & \phantom{\mbox{\footnotesize Survival}}&\multicolumn{1}{|c}{\phantom{1}} & \multicolumn{1}{|c}{} & \multicolumn{1}{|c|}{\phantom{1}}
&\phantom{lungo}&
\multicolumn{1}{|c}{0} & \multicolumn{1}{|c}{0} & \multicolumn{1}{|c|}{0} & &\multicolumn{1}{|c}{} & \multicolumn{1}{|c}{} & \multicolumn{1}{|c|}{}
\\
\cline{1-3}\cline{5-7}\cline{9-11}\cline{13-15}
\multicolumn{1}{|c}{0} & \multicolumn{1}{|c}{0} & \multicolumn{1}{|c|}{0} & \longrightarrow &\multicolumn{1}{|c}{} & \multicolumn{1}{|c}{1} & \multicolumn{1}{|c|}{}
&&
\multicolumn{1}{|c}{0} & \multicolumn{1}{|c}{1} & \multicolumn{1}{|c|}{0} & \longrightarrow &\multicolumn{1}{|c}{} & \multicolumn{1}{|c}{1} & \multicolumn{1}{|c|}{}
\\
\cline{1-3}\cline{5-7}\cline{9-11}\cline{13-15}
\multicolumn{1}{|c}{1} & \multicolumn{1}{|c}{1} & \multicolumn{1}{|c|}{1} & \mbox{\footnotesize Birth} &\multicolumn{1}{|c}{} & \multicolumn{1}{|c}{} & \multicolumn{1}{|c|}{}
&&
\multicolumn{1}{|c}{1} & \multicolumn{1}{|c}{1} & \multicolumn{1}{|c|}{*} & \mbox{\footnotesize Survival} &\multicolumn{1}{|c}{} & \multicolumn{1}{|c}{} & \multicolumn{1}{|c|}{}
\\
\cline{1-3}\cline{5-7}\cline{9-11}\cline{13-15}
\\
\cline{1-3}\cline{5-7}\cline{9-11}\cline{13-15}
\multicolumn{1}{|c}{0} & \multicolumn{1}{|c}{0} & \multicolumn{1}{|c|}{0} & &\multicolumn{1}{|c}{\phantom{1}} & \multicolumn{1}{|c}{} & \multicolumn{1}{|c|}{\phantom{1}}
&&
\multicolumn{1}{|c}{1} & \multicolumn{1}{|c}{*} & \multicolumn{1}{|c|}{*} & &\multicolumn{1}{|c}{\phantom{1}} & \multicolumn{1}{|c}{} & \multicolumn{1}{|c|}{\phantom{1}}
\\
\cline{1-3}\cline{5-7}\cline{9-11}\cline{13-15}
\multicolumn{1}{|c}{0} & \multicolumn{1}{|c}{1} & \multicolumn{1}{|c|}{0} & \longrightarrow &\multicolumn{1}{|c}{} & \multicolumn{1}{|c}{0} & \multicolumn{1}{|c|}{}
&&
\multicolumn{1}{|c}{1} & \multicolumn{1}{|c}{1} & \multicolumn{1}{|c|}{*} & \longrightarrow &\multicolumn{1}{|c}{} & \multicolumn{1}{|c}{0} & \multicolumn{1}{|c|}{}
\\
\cline{1-3}\cline{5-7}\cline{9-11}\cline{13-15}
\multicolumn{1}{|c}{0} & \multicolumn{1}{|c}{0} & \multicolumn{1}{|c|}{*} & \mbox{\footnotesize Death} &\multicolumn{1}{|c}{} & \multicolumn{1}{|c}{} & \multicolumn{1}{|c|}{}
&&
\multicolumn{1}{|c}{1} & \multicolumn{1}{|c}{1} & \multicolumn{1}{|c|}{*} & \mbox{\footnotesize Death} &\multicolumn{1}{|c}{} & \multicolumn{1}{|c}{} & \multicolumn{1}{|c|}{}
\\
\cline{1-3}\cline{5-7}\cline{9-11}\cline{13-15}
\end{array}
$$
\caption{The evolution of a cell in the Game of Life. The symbol $*$ represents any symbol in \{0,1\}.}\label{fig:gol}
\end{figure}

It easily follows from the definition that every cellular automaton
$\CC \colon \Sigma^G \to \Sigma^G$ is $G$-equivariant, i.e.,
$\CC(g\alpha) = g\CC(\alpha)$ for all $g \in G$ and $\alpha \in \Sigma^G$, and
is continuous with respect to the prodiscrete topology on $\Sigma^G$.
The Curtis-Hedlund Theorem (\cite{hedlund},~\cite[Theorem 1.8.1]{CSC}) shows that the converse is also true.

It immediately follows from topological considerations and the Curtis-Hed\-lund Theorem that a bijective cellular automaton
$\CC \colon \Sigma^G \to \Sigma^G$ is  \emph{invertible}, in the sense that the inverse map $\CC^{-1} \colon \Sigma^G \to \Sigma^G$
is also a cellular automaton.

A map $\CC \colon \Sigma^G \to \Sigma^G$ is called \emph{pre-injective} (a terminology
due to Gromov~\cite{gromov2}) if whenever two configurations $\alpha, \beta \in \Sigma^G$ differ
at only finitely many points  (that is, the set $\{g \in G: \alpha(g) \neq \beta(g)\}$ is finite)
and  $\CC(\alpha) = \CC(\beta)$, then $\alpha = \beta$. Clearly pre-injectivity is a weaker form of
injectivity.

 Moore and  Myhill proved that for $G = \Z^d$, $d \geq 1$, a cellular automaton
$\CC \colon \Sigma^G \to \Sigma^G$ is surjective if and only if it is pre-injective.
Necessity is due to Moore and sufficiency is due to Myhill.
This result is often called the \emph{Garden of Eden Theorem}.
Regarding a cellular automaton as a dynamical system with discrete time, a configuration which is not in the image of the
cellular automaton can only appear as an initial configuration, that is, at time $t = 0$.
This motivates the biblical terminology.
In 1993  Mach\`\i \ and  Mignosi~\cite{machi} extended the Garden of Eden theorem to finitely generated groups of
subexponential growth (cf. the end of Section \ref{s:PCG}) and, finally,  Ceccherini-Silberstein, Mach\`\i \ and  Scarabotti~\cite{CMS} (see also Gromov~\cite{gromov}) further extended it to
all amenable groups.

Recall that a group $G$ is said to be \emph{amenable},
a notion going back to von Neumann~\cite{neumanna}, if there exists a \emph{left-invariant
finitely additive probability measure} on $G$, that is, a map $m \colon {\mathcal P}(G) \to
[0,1]$ such that $m(G) = 1$, $m(A \cup B) = m(A) + m(B) - m(A \cap B)$ and $m(gA) = m(A)$,
for all $A,B \in {\mathcal P}(G)$ and $g \in G$. Finite groups, abelian groups, and more
generally solvable groups, groups of subexponential growth are amenable groups. On the
other hand the free nonabelian groups are non-amenable.

Based on examples due to Muller~\cite{muller}, in~\cite{CMS}
it is shown that if the group $G$ contains a free nonabelian group (and is therefore
non-amenable, since the class of amenable groups is closed under the operation of taking subgroups), then there exist examples of pre-injective (resp. surjective) cellular automata on $G$ which are not surjective (resp. not pre-injective).  Finally,  Bartholdi in 2010~\cite{bartholdi} (see also Theorem 5.12.1 in~\cite{CSC}) proved
the converse to the amenable version of Moore's theorem in~\cite{CMS}, namely that  if every surjective
cellular automaton $\CC  \colon \Sigma^G \to \Sigma^G$ is pre-injective, then the  group $G$ is amenable.
This yields a new characterization of amenability in  terms of cellular automata.

Following   Gottschalk~\cite{gottschalk}, we say that a group $G$ is \emph{surjunctive}
provided that for every  finite set $\Sigma$ every injective cellular automaton $\CC \colon \Sigma^G \to \Sigma^G$ is surjective
(and therefore bijective). It is an open problem to determine whether all groups are surjunctive or not.
 Lawton~\cite{lawton} (see also~\cite[Theorem 3.3.1]{CSC}) showed that all residually finite groups (in particular,  all virtually free groups) are surjunctive. Recall that a group
 is residually finite provided that the intersection of all its finite index subgroups reduces to the trivial group (see, e.g.~\cite[Chapter 2]{CSC}.
It immediately follows from the Garden of Eden Theorem for amenable groups that all
amenable groups are surjunctive.  Gromov~\cite{gromov2} and  Weiss~\cite{weiss}
(see also~\cite[Theorem 7.8.1]{CSC}) showed that all \emph{sofic} groups are surjunctive.
For the definition of soficity we refer to~\cite[Chapter 7]{CSC}. We only mention that the class of sofic groups contains  all residually finite groups and all amenable groups,
and that it is not known if there are any non-sofic groups.

One is often interested in determining whether a cellular automaton is injective or surjective.
In particular, the following question naturally arises: is it decidable,  given a
finite subset $M \subset G$ and a map $\mu \colon \Sigma^{M} \to \Sigma$, if the associated cellular automaton
$\CC \colon \Sigma^G \to \Sigma^G$ defined in \eqref{e:ca2}  is surjective or not?  Amoroso and  Patt~\cite{amoroso}
proved in 1972 that if $G = \Z$ the above Surjectivity Problem is decidable.
On the other hand,  Kari~\cite{kari1,kari2, kari3} proved that the similar problem for cellular automata with finite alphabet over
$\Z^d$, $d \geq 2$, is undecidable. His  proof is based on Berger's undecidability result for the Domino Problem (see Section~\ref{sub-domino}).
It follows  from the decidability of the monadic second-order theory of Cayley graphs of context-free groups (cf. Theorem~\ref{t;MS-dec}) that the Surjectivity Problem
for cellular automata defined over finitely generated virtually-free groups is decidable.

Indeed,  that the cellular automaton is surjective is expressed by
saying that for every disjoint cover $(C_1,C_2, \dots, C_n)$ of $G$ (where $C_i$ represents the points currently in state $a_i \in \Sigma$) there is a disjoint cover $(P_1,P_2, \dots, P_n)$ (the assignment of predecessor states) such that for
every vertex $v$, one has $v \in C_i$ if and only if the points in the neighborhood of $v$ are in the correct $P$-sets for the local defining map $\mu$ to assign
state $a_i$ to $v$. This fact is easily  expressible as  a monadic second-order sentence.  It similarly follows that
the Injectivity and Bijectivity Problems are decidable for cellular automata on finitely generated virtually-free groups.

The following natural question is open.

\begin{question}
Are there any finitely generated groups which are not virtually free but for which the Surjectivity, Injectivity or Bijectivity Problems are decidable?
\end{question}


\section{Finite automata on infinite inputs and infinite games of perfect information}\label{sec:games}

\subsection{B\"uchi acceptance and regular languages in $\Sigma^\N$}
As mentioned in the Introduction, monadic sentences are too complicated to deal with directly.  The theorems of B\"uchi (cf. Theorem~\ref{t:buchi}) and
of Rabin (cf. Theorem~\ref{t:rabin}) are  proved by developing a theory of finite automata working  on infinite words and infinite trees respectively.
Let  $w = w_0w_1 \dots w_i w_{i+1}\cdots \in \Sigma^\N$ be an infinite word. (All our infinite words are infinite to the right.)
In B\"uchi's original paper, a nondeterministic finite automaton working on a word $w \in \Sigma^{\mathbb{N}}$
is a tuple $\A = (Q,\Sigma, q_0, \delta , F)$ exactly as in the case
of automata on finite words (cf. Section~\ref{sec:fa}).
Thus, as usual, $Q$ is a finite set of states, $\Sigma$ is a finite alphabet, $q_0 \in Q$ is the initial state, $\delta : Q \times \Sigma \to \mathcal{P}(Q)$ is the transition function and $F \subseteq Q$ is a set of final states.
A \emph{run} of $\A$ on $w$ is a map $\rho: \mathbb N \to Q$ such that $\rho(0) = q_0$ and $\rho (i+1) \in \delta(\rho(i), w_i) $
for all $i \in \N$.  We must now define when the  automaton $\A$ \emph{accepts} $w \in \Sigma^\N$, which we write as $\A \vdash  w$.
The definition of \emph{B\"uchi acceptance} is that $\A \vdash w$ if there exists a run $\rho$ of $\A$ on $w$ such that some state
from $F$ occurs infinitely often.  As in the case of finite words, we call the set
$$
L(\A) = \{w \in \Sigma^\N : \A \vdash w\} \subset \Sigma^\N
$$
the \emph{language accepted} by $\A$.  A  subset $L \subseteq \Sigma^\N$ is a \emph{regular language} if it is the language
accepted by some finite automaton.

\begin{example}
\label{example-finite-b}
Let $\Sigma = \{a,b\}$. We describe a finite automaton which accepts those infinite words $w \in \Sigma^\N$ containing $b$ only a
finite number of times.
Let $\A = (Q,\Sigma, q_0, \delta , F)$ be a finite automaton
where $Q =\{q_b,q_a,q_c,q_r\}$, $q_0 = q_b$, $F = \{q_c\}$ and
\[
\begin{split}
\delta(q_b,a) &= q_a, \ \delta(q_b,b) = q_b, \\
\delta(q_a,a) &= \{q_a, q_c\}, \  \delta(q_a,b) = q_b,\\
\delta(q_c,a) &= q_c,  \  \delta(q_c,b) = q_r, \\
\delta(q_r,a) &= \delta(q_r,b) = q_r.
\end{split}
\]
The automaton is illustrated in Figure~\ref{AUTfinite-b} and it works in the following way.  When in state $q_b$,
the automaton goes to $q_a$ on reading $a$ and remains in $q_b$ on reading $b$.
On reading a $b$ in the  state $q_a$ it goes to state $q_b$.  On reading an $a$ in $q_a$
the automaton can either remain in state $q_a$ or ``guess'' that it will see no $b$'s in the future by going to the ``check''
state $q_c$.  In $q_c$ the automaton remains in $q_c$ as long as it sees only $a$'s
but goes to the reject state $q_r$ if it ever reads a $b$. Once in $q_r$ the automaton always remains in $q_r$ on either input.
Since $F = \{q_c\}$, in any accepting run the automaton must  have guessed at some time that no more $b$'s occur and must then
always remain in $q_c$, thus seeing no more $b$'s.
And for any $w \in \Sigma^\N$ containing only finitely many $b$'s there is an accepting run.
\end{example}

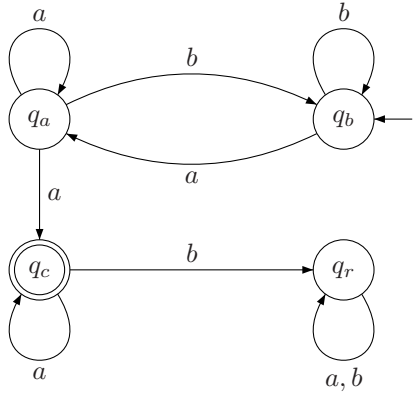
\begin{figure}[h!]
\begin{center}
\begin{picture}(0,50)(0,-30)
\node(1)(-20,0){$q_a$}
\node[Nmarks=i, iangle=0](2)(20,0){$q_b$}
\node[Nmarks=r](3)(-20,-20){$q_c$}
\node(4)(20,-20){$q_r$}
\drawloop(1){$a$}
\drawloop(2){$b$}
\drawedge[curvedepth=0](1,3){$a$}
\drawedge[curvedepth=6](2,1){$a$}
\drawedge[curvedepth=6](1,2){$b$}
\drawedge[curvedepth=0](3,4){$b$}
\drawloop[loopangle=270](3){$a$}
\drawloop[loopangle=270](4){$a,b$}
\end{picture}
\end{center}
\caption{The automaton accepting the infinite words $w \in \{a,b\}^\N$ containing only a
finite number of $b$'s.}\label{AUTfinite-b}
\end{figure}

The overall goal is to associate with each monadic sentence $\phi$ of $S1S$ a finite automaton $\A_{\phi}$ such that $\phi$ is
true if and only if $L(\A_{\phi}) \ne \varnothing$. In order to do this we need to establish the closure of
regular languages under the three operations of union, complementation, and projection.  These operations correspond to the logical
connectives $\lor, \lnot$, and $\exists$ respectively.   If $\Sigma$ and $\overline{\Sigma}$ are alphabets  and $\pi: \Sigma \to \overline{\Sigma}$ is a map
then $\pi$ induces a function $\widehat{\pi}: \Sigma^\N \to \overline{\Sigma}^\N$ by letter-by-letter substitution. If $L \subset \Sigma^\N$ is
a language, then $\widehat{\pi}(L) \subset \overline{\Sigma}^\N$ is the \emph{projection} of $L$ under $\pi$ and we need to know that if $L$ is a
regular language over $\Sigma$ then  $\widehat{\pi}(L)$ is a regular language over $\overline{\Sigma}$.

The closure of regular languages with respect to the operation of union is easy to establish in essentially any model of finite automata.
Also,   projection is ``easy'' for nondeterministic automata, even on infinite words,  and ``hard'' for deterministic automata.
Suppose that   $\pi: \Sigma \to \overline{\Sigma}$ is a function inducing the projection
$\widehat{\pi}: \Sigma^{\omega} \to \overline{\Sigma}^{\omega}$ and that  $\A = (Q, \Sigma, \delta, q_0, F)$ is a nondeterministic automaton with alphabet $\Sigma$.
To accept the projection of the language accepted by $\A$, we define a nondeterministic automaton   $\widehat{\A}$ which, on reading a
letter $\overline{a} \in \overline{\Sigma}$  can make any transition that $\A$ can make on any preimage of $\overline{a}$.  Formally,
\[
\widehat{\A} = \left(\mathcal{P}(Q), \overline{\Sigma}, \widehat{\delta}, \{q_0\}, \mathcal{P}(F) \right) \mbox{ \ where }
 \widehat{\delta}(S,\overline{a}) = \bigcup_{q \in S}\bigcup_{a \in \pi^{-1}(\overline{a})} \delta(q,a).
\]

Note that even if we started  with a deterministic automaton  $\A$,  the automaton $\widehat{\A}$ is nondeterministic.

\subsection{Muller acceptance}\label{Muller}
In general,  the closure of regular languages with respect to complementation is  ``hard'' for nondeterministic automata, and regular languages
in $\Sigma^\N$  recognized by using B\"uchi acceptance  generally require using a  nondeterministic automaton.
The power of automata on infinite inputs is very  sensitive to the acceptance condition used.
Muller~\cite{muller1} introduced the concept of \emph{Muller acceptance},
which is the most general type of acceptance commonly used.

\begin{definition}
A nondeterministic  \emph{Muller automaton} is a tuple $\A = (Q, \Sigma, \delta, q_0, \mathcal{F})$ where $Q, \Sigma, \delta$ and $q_0$ are
exactly as for a nondeterministic
finite automaton but  $\mathcal{F} \subset \mathcal{P}(Q)$.
Let $w \in \Sigma^\N$ be a word. If $\rho$ is a run of $\A$ on $w$ then we denote by $\Inf(\rho)$ the set of states occurring infinitely
often in $\rho$. Then $\A$ \emph{accepts}  $w$ if there exists a run $\rho$ of $\A$ on $w$ such that $\Inf(\rho) \in \mathcal{F}$.
\end{definition}

\begin{remark}  If we compare B\"uchi acceptance with Muller acceptance, we have that
the set of final states $F \subset S$ is now replaced by the ``accepting'' family
$\mathcal{F}$. Moreover $w \in \Sigma^\N$ is B\"uchi-accepted if $\Inf(\rho) \cap F \ne \varnothing$, while it is Muller-accepted if $\Inf(\rho) \in \mathcal{F}$.
\end{remark}

The following result was conjectured by Muller and then proved by McNaughton~\cite{mcnaughton}.

\begin{theorem}[McNaughton]
\label{t:McNaughton} For any nondeterministic automaton on infinite words using Muller acceptance, there is an equivalent deterministic automaton using Muller acceptance.
\end{theorem}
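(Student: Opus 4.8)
The plan is to proceed in two stages: first reduce the determinization of an arbitrary nondeterministic Muller automaton to that of a nondeterministic B\"uchi automaton, and then carry out the latter by means of the tree-based ``subsets of subsets'' bookkeeping introduced by Safra. Recording all of $\Sigma^\N$-automata theory inside this proof is unnecessary; only closure of regular languages under finite union is invoked, and that (as noted above) is trivial already for nondeterministic models.

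For the reduction, let $\mathcal{A} = (Q,\Sigma,\delta,q_0,\mathcal{F})$ be a nondeterministic Muller automaton and fix $F = \{f_1,\dots,f_k\} \in \mathcal{F}$. One builds a nondeterministic B\"uchi automaton $\mathcal{A}_F$ which, on reading $w$, simulates $\mathcal{A}$ for a while, then at some nondeterministically chosen moment commits to the claim ``from here on the set of states occurring infinitely often is exactly $F$'', and thereafter (i) forbids every transition leaving $F$ and (ii) drives a cyclic counter through $f_1,\dots,f_k$ whose wrap-around is declared B\"uchi-accepting, so that an accepting run is forced to revisit each element of $F$ infinitely often. Then $L(\mathcal{A}) = \bigcup_{F \in \mathcal{F}} L(\mathcal{A}_F)$, so by closure under finite union it suffices to determinize a single nondeterministic B\"uchi automaton $\mathcal{B} = (Q,\Sigma,\delta,q_0,F)$.

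The heart of the argument is to construct a deterministic automaton whose states record, not merely the set of states of $\mathcal{B}$ reachable along all runs on the prefix read so far --- the naive subset construction, which provably cannot track which runs pass through $F$ infinitely often --- but a finite ordered tree of such subsets, a \emph{Safra tree}: each node carries a nonempty subset of $Q$, the root carries the currently reachable set, the children of a node refine how that node's runs have most recently branched off through $F$, and names are drawn from a fixed set of size $O(|Q|)$. On each input letter the tree is updated by applying $\delta$ labelwise, spawning a new youngest child at every node whose label meets $F$, deleting a state from every node except the oldest one that would contain it, flagging (``greening'') each node whose label has become the union of its children's labels and then deleting those children, and finally pruning empty nodes. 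The acceptance family on the deterministic automaton is chosen so that a run is accepting precisely when some name $v$ is, from some point on, never deleted and is greened infinitely often; this is visibly a Rabin condition, hence a Muller condition, and since there are only finitely many (indeed $2^{O(|Q|\log|Q|)}$) Safra trees over $Q$, the resulting deterministic Muller automaton is finite.

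Correctness splits into two implications, each a compactness argument. If $\mathcal{B}$ accepts $w$, a successful run $\rho$ determines at each stage a node ``hosting'' $\rho(i)$; one checks that the host's name eventually stabilizes and that this stable node is greened infinitely often, so the deterministic automaton accepts. Conversely, if the deterministic automaton accepts via a name $v$ never deleted after time $N$ and greened at times $N < t_1 < t_2 < \cdots$, then between consecutive greenings every state in $v$'s label is reached along a path that has traversed $F$; stitching these finitely branching segments and invoking K\"onig's Lemma yields an infinite run of $\mathcal{B}$ on $w$ visiting $F$ infinitely often. The main obstacle is exactly this verification that the greening/deletion discipline faithfully encodes ``some run visits $F$ infinitely often'': both the stabilization-of-the-host-node step and the K\"onig's Lemma reconstruction must simultaneously control the branching of runs and the bounded reuse of names, and it is precisely here that the tree refinement, rather than a flat subset, is indispensable.
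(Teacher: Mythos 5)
Your argument is correct in outline, but it takes a genuinely different route from the one in the paper. You determinize directly: you first convert the nondeterministic Muller automaton into a nondeterministic B\"uchi automaton by guessing the eventual infinity set $F$ and enforcing it with a cyclic counter, and then you determinize that B\"uchi automaton by Safra's tree-of-subsets construction, reading off a Rabin (hence Muller) condition on the Safra trees. The paper instead deduces McNaughton's theorem as a corollary of the alternating-automata machinery developed in Section 7: a nondeterministic Muller automaton on words has only $\lor$'s in its transition function, so its dual is a \emph{deterministic} alternating automaton; the Simulation Theorem (proved via the Gurevich--Harrington later appearance record) turns this into an ordinary deterministic automaton for the complementary language, and the Complementation Theorem (pure determinacy of the acceptance game) plus one final complementation of the accepting family recovers $L(\A)$. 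The trade-off is clear: the paper's proof is a few lines long but rests entirely on two substantial black boxes whose proofs encapsulate all the combinatorial difficulty, and it fits the survey's theme of deriving automata theory from infinite games; your proof avoids games altogether and yields an explicit (and optimal) $2^{O(|Q|\log|Q|)}$ state bound, at the cost of having to verify the notoriously delicate Safra update discipline, which you correctly identify as the crux but only sketch. One small point worth making precise in your write-up: after determinizing each $\A_F$ separately you need closure of \emph{deterministic} Muller languages under finite union (a product construction), or else you should take the disjoint union of the nondeterministic B\"uchi automata $\A_F$ \emph{first} and determinize the single resulting automaton; your phrasing suggests the latter, which is fine.
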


While  the negation of a B\"uchi acceptance condition is not a B\"uchi condition,
the negation of a Muller acceptance condition $\mathcal{F}$ is again a condition of the same type, namely the Muller condition defined by the accepting family $\mathcal{P}(Q) \setminus \mathcal{F}$.
For a deterministic automaton  $\A = (Q, \Sigma, \delta, q_0, \mathcal{F})$ using Muller acceptance to accept the language $L(\A)$ we have
\[
\Sigma^* \diagdown L(\A) = L(\lnot \A) \mbox { where }  \neg \A =   (Q, \Sigma, \delta, q_0, \mathcal{P}(Q) \setminus \mathcal{F}).
\]
In short, $\lnot \A$ is obtained from $\A$ by simply complementing the accepting family.

McNaughton's theorem thus proves that the class of regular languages of infinite words is closed under complementation.  Proving McNaughton's theorem
 from scratch is not easy and it is an  accident  that determinizing  the nondeterministic automaton of
Example~\ref{example-finite-b} is easy.

\begin{example}
\rm{Let $\Sigma = \{a,b\}$. We now present a deterministic finite automaton $\A$ using Muller acceptance which accepts exactly
those words $w \in \Sigma^\N$ containing $b$ infinitely often.
Let $\A = (Q,\Sigma,  \delta, q_0, \mathcal{F})$ be the finite automaton in which
$Q = \{q_a,q_b\}$, $\Sigma = \{a,b\}$, $q_0 = q_a$, $\mathcal{F} = \left\{\{q_b\}, \{q_a, q_b\}\right\}$ and
\[
\begin{split}
\delta(q_a,a) & = q_a, \ \delta(q_a, b) = q_b,\\
\delta(q_b,a) & = q_a, \ \delta(q_b,b) = q_b.
\end{split}
\]

The automaton is illustrated in Figure~\ref{AUTinfinite-b} and it works in the following way.
The states $q_a$ and $q_b$ record which letter has just been read.
On a word $w \in \Sigma^\N$ containing $b$ infinitely often the set of states
occurring infinitely often must be exactly  $\{q_a,q_b\}$ in the case that both letters occur infinitely often or $\{q_b\}$ in the
case that only $b$ occurs infinitely often.
Since $\mathcal{F}$ consists of these two sets, the automaton accepts exactly the desired words.
Note that  $\neg\A = (Q,\Sigma,  \delta, q_0, \{ \{q_a \} \})$
is a  deterministic automaton using Muller acceptance which accepts exactly those words containing $b$ only finitely many times (cf. Example~\ref{example-finite-b}).}
\end{example}

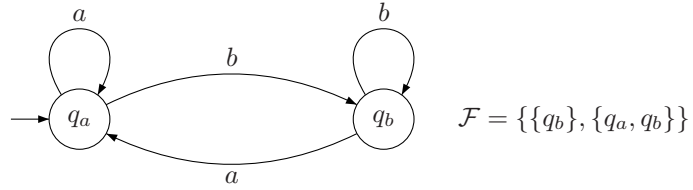
\begin{figure}[h!]
\begin{center}
\begin{picture}(0,20)(0,-5)
\node[Nmarks=i](1)(-20,0){$q_a$}
\node(2)(20,0){$q_b$}
\drawloop(1){$a$}
\drawedge[curvedepth=6](1,2){$b$}
\drawedge[curvedepth=6](2,1){$a$}
\drawloop(2){$b$}
\node[Nframe=n,AHnb=0,Nadjust=w](F)(45,0){$\mathcal{F} = \left\{\{q_b\}, \{q_a, q_b\}\right\}$}
\end{picture}
\end{center}
\caption{The automaton accepting the infinite words $w \in \{a,b\}^\N$ containing an
infinite number of $b$'s.}\label{AUTinfinite-b}
\end{figure}

Deciding the Emptiness Problem for non-deterministic Muller automata is easy.  Given $\A$ with underlying graph $\Gamma$, the language  $L(\A) \ne \varnothing$ if and only if there is a path in $\Gamma$ from the initial state to a cycle containing exactly the states in some set $S \in \mathcal{F}$.

\subsection{Rabin's theory}
We now turn to considering automata on the infinite binary tree $T_2$.
Recall that each vertex of $T_2$ is described by a finite word over the set $\{0,1\}$ of the two possible directions. For a nondeterministic automaton
with alphabet $\Sigma$  working on $T_2$, a possible input $\alpha$ consists of an element
$\alpha \in \Sigma^{T_2}$ which can be described as a copy of $T_2$ with all vertices labelled from
$\Sigma$.  In Rabin's model, a nondeterministic automaton is a 5-tuple $\A = (Q, \Sigma, \delta, q_0, \mathcal{F})$, where $Q$, $\Sigma$, $q_0$ and $\FF$ are defined as in Section~\ref{Muller}. The transition function is of the form
$\delta : Q \times \Sigma \to \mathcal{P}(Q \times Q)$.
The automaton starts at the root
$\varepsilon$ in the initial state $q_0$. A copy of the automaton at a vertex $v$ always sends one copy to the left successor of $v$ and one copy to the right successor of $v$.
\begin{example}\label{defRABINdelta}
If one has
\[
\delta(q_0, a) = \{ (q_1, q_3), (q_2,q_0) \},
\]
then when the automaton is in state $q_0$ reading the letter $a$, it can send one copy to the left in state $q_1$ \emph{and} one copy to the
right in state $q_3$,  \emph{or} it can send one copy to the left in state $q_2$  \emph {and} one copy to the right in state $q_0$.
Note that both ``and'' and ``or'' occur in the description of the transition function. This situation is illustrated in Figure~\ref{RABINdelta}.
\end{example}
\begin{figure}[h!]
\begin{center}
\begin{picture}(0,20)(0,-10)

\node(0)(0,0){$q_0$}
\node(1)(30,10){$q_1$}
\node(3)(30,-10){$q_3$}
\node(2)(-30,10){$q_2$}
\node[Nframe=n,Nw=-1,Nh=-1](00)(20,0){}
\node[Nframe=n,Nw=-1,Nh=-1](0_)(-20,0){}

\drawedge[AHnb=0,ATnb=2,ATangle=30,ATLength=3,ATlength=0](00,0){$a$}
\drawedge[curvedepth=2,dash={1}0](00,1){}
\drawedge[curvedepth=-2](00,3){}
\drawedge[AHnb=2,AHangle=30,AHLength=3,AHlength=0](0,0_){$a$}
\drawedge[curvedepth=-2,dash={1}0](0_,2){}
\drawedge[curvedepth=-6](0_,0){}
\end{picture}
\end{center}
\caption{An instance of the transition function in a Rabin automaton. The drawing convention is that the broken line visualizes the copy to the left, while the continuous line visualizes the copy to the right.}\label{RABINdelta}
\end{figure}
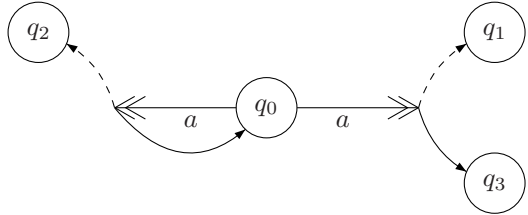

We must now define what it means for  an automaton $\A$  to accept an input $\alpha$, for which we write $\A \vdash \alpha$ as usual.  An infinite path $\pi$
through $T_2$ is a path starting at the origin $\varepsilon$ such that each vertex in $\pi$ has exactly one successor in $\pi$. Note that
$\pi \in \{0,1\}^{\mathbb N}$ and there are thus uncountably many distinct infinite paths through the tree.
A \emph{run} $\rho$ of $\A$ on $\alpha$ is an element in $Q^{T_2}$,
that is, a labelling of $T_2$ by states from $Q$  such that for each vertex $v \in T_2$ we have
$$(\rho(v0), \rho(v1)) \in \delta(\rho(v), \alpha(v)).$$
We will again use Muller acceptance although Rabin used a different but equivalent condition.
So we specify a family $\mathcal{F} \subseteq \mathcal{P}(Q)$.
Given a run $\rho$ and a path $\pi$, we define $\Inf(\rho, \pi)$ to be the set of states in $\rho$ which occur infinitely often along the path $\pi$.
Finally,
\[
\A \vdash \alpha  \mbox{ \ if \ } \exists \rho \forall \pi [\Inf(\rho, \pi) \in \mathcal{F}].\]
In short,  for every path $\pi$ the set of states occurring infinitely often along $\pi$ must be some set $S$ in the accepting family $\mathcal{F}$.
Note that $S$ can vary with different paths.

\begin{example}\label{RABINex} \rm{We extend Example~\ref{example-finite-b}. Suppose again that $\Sigma = \{ a,b \}$ and we now want an automaton which
accepts $\alpha \in \Sigma^{T_2}$ exactly if $\alpha$ contains some infinite path $\pi$ on which $b$ occurs only finitely often.
Let  $\A = (Q,\Sigma,  \delta, q_0, {\mathcal F})$
where $Q = \{q_a,q_b, q_d \}$, $q_0 = q_a$, ${\mathcal F} =\{\{q_a\}, \{q_d\}\}$, and
\[
\begin{split}
\delta(q_a,a) & = \{(q_d, q_a), (q_a,q_d) \},  \delta(q_a,b) = \{(q_d,q_b),(q_b,q_d)\},\\
\delta(q_b,a) & = \{(q_d, q_a) ,(q_a,q_d) \},  \delta(q_b,b) = \{(q_d,q_b),(q_b,q_d)\},\\  \delta(q_d,a) & = \delta(q_d,b) = \{(q_d,q_d)\}.
\end{split}
\]

The automaton is illustrated in Figure~\ref{RABINautomaton} and  works in the following way.  Its overall strategy is to make a nondeterministic choice of the path $\pi$. On reading an $a$ in state $q_a$,
the automaton sends a copy in the ``don't care'' state $q_d$ in one direction and a copy in $q_a$ in the other direction.
On reading a $b$ in  state $q_a$,  the automaton sends a copy in the ``don't care'' state $q_d$ in one direction
and a copy in $q_b$ the other direction.  The state $q_b$ functions similarly.
If the automaton is in the ``don't care'' state $q_d$, it is not on the chosen path and so sends copies in $q_d$ in both directions on
reading either letter. It is easy to see that $\A \vdash \alpha$  if and only if $\alpha$ does contain an infinite path with only finitely many $b$'s.}
\end{example}
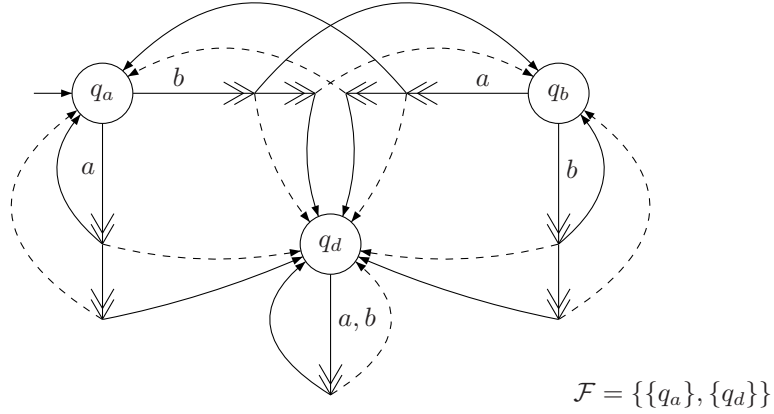
\begin{figure}[h!]
\begin{center}
\begin{picture}(0,50)(0,-40)
\node[Nmarks=i](A)(-30,0){$q_a$}
\node(D)(0,-20){$q_d$}
\node(B)(30,0){$q_b$}
\node[Nframe=n,Nw=-1,Nh=-1](A_)(-30,-20){}
\node[Nframe=n,Nw=-1,Nh=-1](B_)(30,-20){}
\node[Nframe=n,Nw=-1,Nh=-1](A__)(-30,-30){}
\node[Nframe=n,Nw=-1,Nh=-1](B__)(30,-30){}

\drawedge[AHnb=0,ATnb=2,ATangle=30,ATLength=3,ATlength=0](A_,A){$a$}
\drawedge[AHnb=0,ATnb=2,ATangle=30,ATLength=3,ATlength=0](A__,A_){}
\drawedge[curvedepth=12,dash={1}0](A__,A){}
\drawedge[curvedepth=-1](A__,D){}
\drawedge[curvedepth=6](A_,A){}
\drawedge[curvedepth=-2,dash={1}0](A_,D){}

\drawedge[AHnb=2,AHangle=30,AHLength=3,AHlength=0](B,B_){$b$}
\drawedge[AHnb=2,AHangle=30,AHLength=3,AHlength=0](B_,B__){}
\drawedge[curvedepth=-12,dash={1}0](B__,B){}
\drawedge[curvedepth=1](B__,D){}
\drawedge[curvedepth=-6](B_,B){}
\drawedge[curvedepth=2,dash={1}0](B_,D){}

\node[Nframe=n,Nw=-1,Nh=-1](AA)(-10,0){}
\node[Nframe=n,Nw=-1,Nh=-1](AAA)(-2,0){}
\node[Nframe=n,Nw=-1,Nh=-1](BB)(10,0){}
\node[Nframe=n,Nw=-1,Nh=-1](BBB)(2,0){}
\drawedge[AHnb=2,AHangle=30,AHLength=3,AHlength=0](A,AA){$b$}
\drawedge[AHnb=0,ATnb=2,ATangle=30,ATLength=3,ATlength=0](BB,B){$a$}
\drawedge[AHnb=2,AHangle=30,AHLength=3,AHlength=0](AA,AAA){}
\drawedge[AHnb=0,ATnb=2,ATangle=30,ATLength=3,ATlength=0](BBB,BB){}
\drawedge[curvedepth=-2,dash={1}0](AA,D){}
\drawedge[curvedepth=12](AA,B){}
\drawedge[curvedepth=-2](AAA,D){}
\drawedge[curvedepth=6,dash={1}0](AAA,B){}

\drawedge[curvedepth=2,dash={1}0](BB,D){}
\drawedge[curvedepth=-12](BB,A){}
\drawedge[curvedepth=2](BBB,D){}
\drawedge[curvedepth=-6,dash={1}0](BBB,A){}

\node[Nframe=n,Nw=-1,Nh=-1](DD)(0,-40){}
\drawedge[AHnb=2,AHangle=30,AHLength=3,AHlength=0](D,DD){$a, b$}
\drawedge[curvedepth=8](DD,D){}
\drawedge[curvedepth=-8,dash={1}0](DD,D){}
\node[Nframe=n,AHnb=0,Nadjust=w](F)(45,-40){$\mathcal{F} = \{\{q_a\}, \{q_d\}\}$}
\end{picture}
\end{center}
\caption{The Rabin automaton defined in Example~\ref{RABINex}.}\label{RABINautomaton}
\end{figure}

\subsection{Infinite games of perfect information}
Deterministic automata on trees are not very powerful and nondeterminism is essential.  Rabin's proof of the closure of regular languages
under complementation was  very difficult.  We now know that the best way to understand automata
on infinite inputs is in terms of infinite games of perfect information, as introduced by Gale and Stewart~\cite{GS}.

Let $\Sigma$ be a finite alphabet, let $\Sigma^\N$ denote the set of all  infinite words over $\Sigma$, and let $\mathcal W$ be a subset
of  $\Sigma^\N$.   We consider the following game between Player I and Player II.
Player I chooses a letter $\sigma_1 \in \Sigma$ and  Player II then chooses a letter
 $\sigma_2 \in \Sigma$.  Continuing indefinitely, at step $n$ Player I chooses a letter  $\sigma_{2n-1} \in \Sigma$ and Player II then chooses a
letter  $\sigma_{2n} \in \Sigma$.  The sequence of choices defines an infinite word $w = \sigma_1\sigma_2 \cdots \sigma_n \cdots \in \Sigma^\N$.
Player I wins the game if $w \in \mathcal W$ and Player II wins otherwise.
The basic question about such games is whether or not one of the players has a winning strategy, that is, a function  $\phi: \Sigma^* \to \Sigma$
such that when a finite word $u$ has already been played, the player then plays  $\phi(u) \in \Sigma$ and always wins.
Using the Axiom of Choice, it is possible to construct winning sets such that neither player has a winning strategy, but this cannot happen
if the set $\mathcal W$ is not ``too complicated''.

\begin{example} \rm{We  show that if the set $\mathcal{W}$ is countable and $|\Sigma| \ge 2$ then the second player has a winning strategy by
applying Cantor's diagonal argument.  Let $w_i = w_{i,1} w_{i,2}\cdots w_{i,n}\cdots$ be the $i$-th  word in $\mathcal{W}$.
On his turn, play $2k$, Player II simply plays a letter different from $w_{2k,2k}$.  Thus the word resulting from the set of plays is not in $\mathcal{W}$.
Note that this simple example shows that strategies need not at all be effectively computable. Since the $w_i$ are infinite words, even a single
such word need not be computable since $\mathcal{W}$ is an arbitrary countable subset of $\Sigma^\N$.}
\end{example}

The set $\Sigma^\N$ becomes a complete metric space by defining $\dist(v,w) = 2^{-j}$ for all
$v = v_1v_2 \cdots$ and $w = w_1w_2\cdots$, where $j$ is the least index such that $w_j \ne v_j$.
An important theorem of Martin~\cite{martin1, martin2} (see also~\cite[Sect. 20]{kechris} and~\cite[Sect. 6F]{moschovakis}) shows that if
the set $\mathcal W$ is a Borel set then one of the two players must have a winning strategy.
In applying infinite games to automata, one needs only consider winning conditions which are $F_{\delta, \sigma}$
and that  such games are determined was proven by  Davis~\cite{Davis} before Martin's general result.
Given an automaton $\A$ and an input $\alpha$, one defines the \emph{acceptance game} $\mathcal G (\A,t)$ for $\A$ on the input $\alpha$. The first player wins
if $\A$ accepts $\alpha$ while the second player wins if $\A$ rejects.

Muller and Schupp~\cite{MS3} defined \emph{alternating tree automata} as a generalization of nondeterministic automata working on trees.
In this model, the transition function has the form $\delta: Q \times \Sigma \to \mathcal{L}(Q \times \{0,1\}$, where $\mathcal{L}(Q \times \{0,1\}$ is the free distributive lattice generated by all possible pairs (state, direction).

\begin{example}We consider again a nondeterministic automaton in which
\[ \delta(q_0, a) = \{ (q_1, q_3), (q_2,q_0) \} \]
as in Example~\ref{defRABINdelta}.
In the lattice notation we can write this as
\[ \delta(q_0, a) = [( q_1, 0) \land (q_3,1)] \lor [(q_2,0) \land (q_0, 1)]. \]
Here the symbol $\lor$ stands for nondeterministic choice  and $\land$ means ``do both things''.

We  \emph{dualize} a transition function of an alternating tree automaton
by interchanging  $\land$ and $\lor$ as usual. For the example above we have:
\[
\widetilde{\delta}(q_0, a) = [( q_1, 0) \lor (q_3,1)] \land  [(q_2,0) \lor (q_0, 1)].
\]
Converting this expression to disjunctive normal form we have:
\[
\widetilde{\delta}(q_0, a) = [( q_1, 0) \land (q_2,0)] \lor  [( q_1, 0) \land (q_0,1)] \lor  [(q_3,1) \land (q_2,0)]
   \lor  [(q_3,1) \land (q_0, 1)].
\]
We interpret this as saying that when the automaton is in state $q_0$ reading the letter $a$ it has a choice of sending one copy to the
left in $ q_1$  \emph{and} another copy to the left in  $q_2$, \emph{or}  sending a copy to the left
in $ q_1$ \emph{and} a copy to the right in $q_0$, \emph{or} a copy to the right in  $q_3$ \emph{and} a copy to the left in $q_2$,
\emph{or}, finally,  a copy to the right in $q_3$ \emph{and} another copy to the right in  $q_0$.
This is not a nondeterministic automaton but it is a perfectly good alternating automaton. Note that the automaton can send multiple copies
in the same direction and is not required to send copies in all directions.  It must, of course, send at least one copy in some direction.
\end{example}

We now have a framework general enough to always be able to dualize.

\begin{definition}  Let $\A = (Q,\Sigma, \delta, q_0, \mathcal{F})$ be an alternating automaton on the rooted infinite binary  tree.
Then the \emph{dual automaton} of $\A$ is
\[
\widetilde{\A} = (Q,\Sigma, \widetilde{\delta}, q_0, \overline{\mathcal{F}})
\]
where $\widetilde{\delta}$ is obtained by dualizing the transition function $\delta$, and the accepting family is
$\overline{\mathcal{F}} = \mathcal{P}(Q) \setminus \mathcal{F}$.
\end{definition}

It is clear from the definition that the dual of $\widetilde{\A}$ is just $\A$.
One must carefully define the acceptance game $\GG(\A,t)$ of $\A$ on an input $\alpha$ (for details see~\cite{MS3}).  That this game is determined
follows from Davis' theorem. In the alternating framework, it is easy to check that a winning strategy for the second player
in $\GG(\A,t)$ is a winning strategy for the first player in the acceptance game $\GG(\widetilde{\A},t)$ for the dual automaton.
Thus complementation is easy for alternating automata and the following theorem is a consequence of pure determinacy.

\begin{theorem}[The Complementation Theorem]  If $\A$ is an alternating tree automaton accepting the language $L(\A)$ then the dual automaton
$\widetilde{\A}$ accepts the complementary language $\lnot L(\A)$.
\end{theorem}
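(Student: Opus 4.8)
The plan is to obtain the whole statement from pure game determinacy, exactly as indicated above. First I would recall the acceptance game $\GG(\A,\alpha)$ for an alternating tree automaton $\A=(Q,\Sigma,\delta,q_0,\mathcal{F})$ on an input $\alpha\in\Sigma^{T_2}$. A play builds, level by level, an infinite branch $\pi$ of $T_2$ together with a sequence of states: at a position consisting of a vertex $v$ and a current state $q$, Player~I (``Automaton'') selects a disjunct of $\delta(q,\alpha(v))$, i.e.\ a conjunction $\bigwedge_k (q_k,d_k)$ of (state, direction) pairs, and then Player~II (``Pathfinder'') selects one conjunct $(q_k,d_k)$, whereupon the play moves to the vertex $vd_k$ in state $q_k$. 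The branch $\pi$ and the states visited along it determine $\Inf(\rho,\pi)$, and Player~I wins the play iff $\Inf(\rho,\pi)\in\mathcal{F}$. The elementary fact I would state and use is that $\A\vdash\alpha$ if and only if Player~I has a winning strategy in $\GG(\A,\alpha)$: a winning strategy for Player~I is precisely a uniform choice of disjuncts, i.e.\ a run $\rho$ with $\Inf(\rho,\pi)\in\mathcal{F}$ along every branch $\pi$, which is the definition of acceptance.

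Two ingredients then finish the argument. The first is determinacy: the winning set of Player~I in $\GG(\A,\alpha)$ is a Boolean combination of sets of the form ``state $q$ occurs infinitely often along the branch'', each of which is $G_\delta$ in the space of plays, so the winning set is Borel (indeed an $F_{\sigma\delta}$ condition); hence by Davis's theorem — a fortiori by Martin's Borel determinacy theorem — the game $\GG(\A,\alpha)$ is determined, so exactly one of the two players has a winning strategy. The second, and the only place where there is real work, is the duality identity: $\GG(\widetilde{\A},\alpha)$ is, up to an innocuous relabelling of moves, the game $\GG(\A,\alpha)$ with the two players' roles interchanged. Passing from $\delta$ to $\widetilde{\delta}$ swaps $\land$ and $\lor$, so writing $\widetilde{\delta}(q,\alpha(v))$ in disjunctive normal form makes each move of ``Automaton'' in $\GG(\widetilde{\A},\alpha)$ a choice of one (state, direction) pair out of \emph{every} disjunct of $\delta(q,\alpha(v))$ — which is exactly a response of ``Pathfinder'' in $\GG(\A,\alpha)$ — after which ``Pathfinder'' in $\GG(\widetilde{\A},\alpha)$ picks which original disjunct to honour, which is exactly a move of ``Automaton'' in $\GG(\A,\alpha)$. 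Together with the complementation of the accepting family to $\overline{\mathcal{F}}=\mathcal{P}(Q)\setminus\mathcal{F}$, which flips the payoff, this shows that a winning strategy for Player~II in $\GG(\A,\alpha)$ is a winning strategy for Player~I in $\GG(\widetilde{\A},\alpha)$, and conversely. I expect this bookkeeping — being scrupulous about how positions and moves in the two games correspond — to be the main obstacle; everything else is formal.

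Granting the identity, the conclusion is immediate. For every $\alpha\in\Sigma^{T_2}$ we have $\alpha\notin L(\A)$ iff Player~I has no winning strategy in $\GG(\A,\alpha)$, iff by determinacy Player~II has one, iff (by the duality identity) Player~I has a winning strategy in $\GG(\widetilde{\A},\alpha)$, iff $\alpha\in L(\widetilde{\A})$. Hence $L(\widetilde{\A})=\lnot L(\A)$, as claimed; and since dualizing twice returns $\A$, the statement is symmetric in $\A$ and $\widetilde{\A}$.
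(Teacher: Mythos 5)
Your proposal is correct and follows essentially the same route as the paper: define the acceptance game, invoke determinacy for the ($F_{\sigma\delta}$, hence Borel) winning condition via Davis's theorem, and observe that dualizing the transition function together with complementing the accepting family interchanges the two players' roles, so that a winning strategy for Player~II in $\GG(\A,\alpha)$ is a winning strategy for Player~I in $\GG(\widetilde{\A},\alpha)$. The paper states exactly this sketch (deferring the careful definition of the game to the reference), and your write-up simply fills in the same steps in more detail.
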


Of course, something must be hard for alternating automata and it is the operation of projection.  The argument for nondeterministic automaton
fails completely because there may be multiple copies of the automaton at the same vertex of the tree. So we must prove that given an
alternating automaton, there is a nondeterministic automaton  accepting the same language.
Gurevich and Harrington~\cite{GH} made a fundamental contribution to understanding
automata on infinite inputs by showing that a winning strategy in the acceptance game for a nondeterministic automaton depends only on a
finite amount of memory called the \emph{later appearance record}.  This is called the \emph{Forgetful Determinacy Theorem} (see~\cite{GH, zeitman}).
Muller and Schupp~\cite{MS3} used the later appearance record to prove the  \emph{Simulation Theorem} which states that there is an effective
construction which, given an alternating automaton, produces a nondeterministic automaton accepting the same language.

Given the Complementation and Simulation theorems, most results have short conceptual proofs. As an illustration, we present a proof of McNaughton's theorem.

\begin{proof}[Proof of Theorem \ref{t:McNaughton}]
There is a natural notion of an automaton which is alternating but still deterministic.  Namely, one with no $\lor$'s in its transition function.  The Simulation Theorem
shows that if we start with a deterministic alternating automaton, then the simulating ordinary automaton is a deterministic automaton.
If $\A$ is a nondeterministic automaton on the line (i.e. $|D| =1$), using Muller acceptance, then $\A$ has only $\lor$'s in its transition function.
Then its dual automaton $\widetilde{\A}$ has only $\land$'s in its transition function and therefore is a deterministic alternating
automaton. By the Simulation Theorem we can construct a deterministic automaton $\A'$ on the line which accepts the same language $L'$ as $\widetilde{\A}$.
By the Complementation Theorem, $L'$ is the complement of the language $L$ accepted by $\A$. Since $\A'$ is deterministic we obtain a deterministic
automaton $\lnot \A'$ accepting the complement of $L'$, that is, $L$, by simply complementing the accepting family of $\A$, thus establishing McNaughton's Theorem.
\end{proof}

\section*{Acknowledgments}
We would like to express our deepest gratitude to Tony Martin for a very
interesting and stimulating conversation on infinite games of perfect information and
the Borel determinacy. Also, we wish to warmly thank Maurice
Margenstern for providing fruitful information on the Domino Problem on the hyperbolic plane.
Last but not least, we thank Ilya Kapovich for very helpful suggestions.


\end{document}